\newcounter{thms}
\newtheorem{theorem}[thms]{Theorem}
\newtheorem{corollary}[thms]{Corollary}
\newtheorem{lemma}[thms]{Lemma}
\newtheorem{proposition}[thms]{Proposition}
\newcounter{asses}
\newtheorem{assumption}[asses]{Assumption}
\newcounter{rems}
\newtheorem{remark}[rems]{Remark}
\DeclareMathAlphabet{\mymathbb}{U}{BOONDOX-ds}{m}{n}
\newcommand{\bb}[1]{\mathbb{#1}}
\newcommand{\mc}[1]{\mathcal{#1}}
\newcommand{\mt}[1]{\textnormal{#1}}
\newcommand{\lt}{\left}
\newcommand{\rt}{\right}
\newcommand{\beeq}{\begin{equation}}
\newcommand{\eneq}{\end{equation}}
\newcommand{\matb}{\begin{matrix}}
\newcommand{\mate}{\end{matrix}}
\DeclareMathOperator*{\argmax}{arg\,max}
\DeclareMathOperator*{\argmin}{arg\,min}
\DeclareMathOperator*{\st}{subject~to}
\def\smalloverbrace#1{\mathop{\vbox{\m@th\ialign{##\crcr\noalign{\kern3\p@}%
  \tiny\downbracefill\crcr\noalign{\kern3\p@\nointerlineskip}%
  $\hfil\displaystyle{#1}\hfil$\crcr}}}\limits}
\definecolor{redtmp}{rgb}{0, 0, 0}
\definecolor{redtmp2}{rgb}{0.93, 0.19, 0.10}
\definecolor{myr}{rgb}{0.93, 0.19, 0.10}
\begin{document}

\title{\LARGE Distributionally robust infinite-horizon control: from a pool of samples to the design of dependable controllers}

\author{Jean-Sébastien Brouillon\textsuperscript{$\star$}, Andrea Martin\textsuperscript{$\star$}, John Lygeros, Florian D\"orfler, and Giancarlo Ferrari-Trecate
\thanks{Jean-Sébastien Brouillon, Andrea Martin, and Giancarlo Ferrari-Trecate are with the Institute of Mechanical Engineering, EPFL, Switzerland. E-mail addresses: \{jean-sebastien.brouillon, andrea.martin, giancarlo.ferraritrecate\}@epfl.ch.}
\thanks{John Lygeros and Florian D\"orfler are with the Department of Information Technology and Electrical Engineering, ETH Z\"urich, Switzerland. E-mail addresses: \{jlygeros, dorfler\}@ethz.ch.}
\thanks{Research supported by the Swiss National Science Foundation (SNSF) under the NCCR Automation (grant agreement 51NF40\textunderscore 180545).}
\thanks{\textsuperscript{$\star$}Jean-Sébastien Brouillon and Andrea Martin contributed equally.}
}

\maketitle

\begin{abstract}
We study control of constrained linear systems with only partial statistical information about the uncertainty affecting the system dynamics and the sensor measurements. Specifically, given a finite collection of disturbance realizations drawn from a generic distribution, we consider the problem of designing a stabilizing control policy with provable safety and performance guarantees despite the mismatch between the empirical and true distributions. We capture this discrepancy using Wasserstein ambiguity sets, and we formulate a distributionally robust (DR) optimal control problem, which provides guarantees on the expected cost, safety, and stability of the system. To solve this problem, we first present new results for DR optimization of quadratic objectives using convex programming, showing that strong duality holds under mild conditions. Then, by combining our results with the system-level parametrization of linear feedback policies, we show that the design problem can be reduced to a semidefinite program. We present numerical simulations to validate the effectiveness of our approach and to highlight the value of empirical distributions for control design.
\end{abstract}

%\begin{IEEEkeywords}
%\end{IEEEkeywords}

\section{Introduction}
As modern engineered systems become increasingly complex and interconnected, classical control methods based on stochastic optimization face the challenge of overcoming the lack of a precise statistical description of the uncertainty. In fact, the probability distribution of the uncertainty is often unknown and only indirectly observable through a finite number of independent samples. In addition, replacing the true distribution with a nominal estimate in the spirit of certainty equivalence often proves unsatisfactory; the optimization process amplifies any statistical error in the distribution inferred from data, resulting in solutions that are prone to poor out-of-sample performance \cite{mohajerin2018data, kuhn2019wasserstein, shafieezadeh2023new}.
%In addition, the certainty equivalence principle\textcolor{myr}{, i.e., the assumption that the true distribution can be extracted from the samples exactly},  does not apply in a straightforward way in this stochastic setting; the optimization process amplifies the statistical errors in any nominal distribution estimated from data, resulting in solutions that often yield poor out-of-sample performance \cite{mohajerin2018data, kuhn2019wasserstein}.

The paradigm of distributionally robust optimization (DRO) considers a minimax stochastic optimization problem over a neighborhood of the nominal distribution defined in terms of a distance in the probability space. In this way, the solution becomes robust to the most averse distribution that is sufficiently close to the nominal one, while the degree of conservatism of the underlying optimization can be regulated by adjusting the radius of the ambiguity set. 

While several alternatives have been proposed to measure the discrepancy between probability distributions, including the Kullback–Leibler divergence and the total variation distance \cite{gibbs2002choosing}, recent literature has shown that working with ambiguity sets defined using the Wasserstein metric \cite{villani2009optimal} offers a number of advantages in terms of expressivity, computational tractability, and statistical out-of-sample guarantees \cite{mohajerin2018data, kuhn2019wasserstein, shafieezadeh2023new}. Thanks to these properties, Wasserstein DRO has found application in a wide variety of domains, ranging from finance and machine learning to game theory, see, e.g., \cite{kuhn2019wasserstein, shafieezadeh2023new, chen2022data, frogner2015learning, adu2022optimal}. 

Wasserstein ambiguity sets have recently been interfaced with the dynamic environments and continuous actions spaces typical of control. In \cite{taskesen2024distributionally}, the authors consider a generalization of classical linear quadratic Gaussian (LQG) control, where the noise distributions belong to Wasserstein balls centered at nominal Gaussian distributions. Motivated by the idea of leveraging uncertainty samples for data-driven decision-making under general distributions, a parallel line of research instead considers ambiguity sets centered at nominal empirical distributions. Among other contributions exploiting the greater expressivity provided by this data-driven approach, \cite{mark2020stochastic, fochesato2022data, aolaritei2023wasserstein, pilipovsky2024distributionally, coulson2021distributionally} consider the design of tube-based predictive control schemes, \cite{yang2020wasserstein, kargin2024wasserstein, kim2023distributional, taskesen2024distributionally} address infinite-horizon problems using dynamic programming and duality theory, \cite{krishnan2020probabilistic} and \cite{brouillon2023regularization} focus on filtering and state estimation problems. More fundamentally, \cite{aolaritei2022uncertainty} and \cite{aolaritei2023capture} provide exact characterizations of how Wasserstein ambiguity sets propagate through the system dynamics, shedding light on the role of feedback in controlling shape and size of the ambiguity sets resulting from distributional uncertainty. Despite these advances, it remains unclear how the availability of samples can drive the design of a control policy that guarantees safety, performance, and stability in the face of distributional uncertainty. 

Motivated by this challenge, this paper makes the following contributions. We first establish novel strong duality results for DRO of quadratic functions that, differently from the results in \cite{kuhn2019wasserstein, shafieezadeh2023new}, hold even if the support of the uncertainty is bounded. Then, leveraging the system-level parametrization (SLP) of linear dynamic controllers \cite{wang2019system}, we present a convex reformulation of the Distributionally Robust Infinite-horizon Controller (DRInC) synthesis problem, which exploits a finite impulse response (FIR) approximation of the system closed-loop maps. 
As key advantages, our optimization-based approach guarantees stability of the closed-loop interconnection by design, and only requires one-shot offline computations, bypassing the computational bottleneck of recomputing the optimal control policy online with a receding horizon strategy \cite{mark2020stochastic, fochesato2022data, aolaritei2023wasserstein}. In fact, as the complexity of the synthesis problem increases with the number of considered uncertainty samples, solving the policy optimization problem in real time becomes prohibitive whenever the nominal empirical distribution is estimated using a sufficiently large number of uncertainty samples.
Further, differently from \cite{yang2020wasserstein} and \cite{kim2023distributional}, which consider infinite-horizon DR control in unconstrained scenarios, our approach naturally extends to include satisfaction of probabilistic safety specifications expressed as DR conditional value-at-risk (CVaR) constraints. Lastly, the proposed optimization perspective allows us to seamlessly study the partially observed setting, extending the recent results \cite{coulson2021distributionally, hakobyan2022wasserstein, taskesen2024distributionally} on output-feedback DR control to the infinite horizon case. As we comment throughout the paper, our formulation encompasses several control problems considered in the literature, providing a unified perspective on stochastic and robust control objectives.

\section{Problem Statement}
\subsection{System dynamics and uncertainty description}\label{subsec_system_dyn_and_uncertainty}
We consider controllable and observable linear dynamical systems described by the state-space equations:
\begin{equation}
\label{eq:system_dynamics_definition}
    x_{t+1} = A x_t + B u_t + w_t\,, ~ y_t = C x_t + v_t\,,
\end{equation}
where $x_t \in \mathbb{R}^n$, $u_t \in \mathbb{R}^m$, $y_t \in \mathbb{R}^p$, $w_t \in \mathbb{R}^n$, and $v_t \in \mathbb{R}^p$ are the system state, the control input, the measured output, and the stochastic disturbances modeling process and measurement noise, respectively. 
% TO ADD IF WE WANT
%\footnote{Our formulation encompasses the full information setting, which corresponds to $C = I_n$ and $v_t = 0$ at all times.}
% \begin{remark}
%     \textcolor{myr}{If p = 0 we have state feedback and if m = 0 we have state estimation.}
% \end{remark}
We study infinite-horizon control when only partial statistical information about the distribution of the joint disturbance process $\xi_t = [w_t^\top, v_t^\top]^\top$ is available. Specifically, we assume availability of $N \in \mathbb{N}$ independent observations $\bm{\xi}_{T}^{(1)}, \dots, \bm{\xi}_{T}^{(N)}$, where each sample
\begin{align}
    \label{eq:training_samples}
    \bm{\xi}_{T}^{(i)} &= \lt[{\bm{w}_{T}^{(i)}}^{\!\top} \!\!, {\bm{v}_{T}^{(i)}}^{\!\top} \rt]^{\!\top} \!\!\!\!,\nonumber\\
    &= \lt[{w^{(i)}_0}^{\!\top} \!\!, \dots, {w^{(i)}_{T}}^{\!\top}, {v^{(i)}_0}^{\!\top} \!\!, \dots, {v^{(i)}_{T}}^{\!\top} \rt]^{\!\top} \!\!\!,
\end{align}
comprises a trajectory of length $T \in \mathbb{N}$ of $w_t$ and $v_t$. As no performance or safety guarantee can be established if the samples in \eqref{eq:training_samples} are not representative of the asymptotic statistics of $\bm{w}$ and $\bm{v}$, we introduce the following stationarity assumption \cite[p. 154]{park2018fundamentals}.
\begin{assumption}
\label{ass:noise_follows_stationary_process}
    %At each time instant, the realization $e_t = (w_t, v_t)$ of the joint disturbance process $\bm{e}$ is generated according to the stationary distribution $\mathbb{P}$ of a Markov chain of order $p \in \mathbb{N}$, i.e., $\mathbb{P}(e_t | e_0, \dots, e_{t-1}) = \mathbb{P}(e_t | e_{t-p}, \dots, e_{t-1})$ for any $t > p$, and $\mathbb{P}(e_0, \dots, e_t) = \mathbb{P}(e_s, \dots, e_{s+t})$ for all $t, s \in \mathbb{N}$. Moreover, the samples $e_t^{(k)} = (w_t^{(k)}, v_t^{(k)})$ in our dataset follow the same distribution $\mathbb{P}$ for all $k \in \mathbb{I}_{[1:N]}$. 
    For all $t \in \mathbb{N}$, the stochastic process that generates the joint disturbance vector $\xi_t = [w_t^\top, v_t^\top]^\top$ is \emph{stationary of order $T$}, i.e., $\mathbb{P}(\xi_0, \dots, \xi_T) = \mathbb{P}(\xi_t, \dots, \xi_{t+T})$.
    %The stochastic process generating the noise $e_t = (w_t, v_t)$ is $T^{th}$-order stationary, i.e., $\mathbb{P}(e_0, \dots, e_T) = \mathbb{P}(e_t, \dots, e_{t+T})$ for all $t \geq 0$. (see \cite[p. 154]{park2018fundamentals}).
\end{assumption}
We note that Assumption~\ref{ass:noise_follows_stationary_process} subsumes the usual setting where the disturbance processes are independent and identically distributed, and allows modeling temporal correlation between samples that are separated by up to $T$ time steps. Further, as the order $T$ can theoretically be arbitrarily large, this assumption is relatively mild, albeit, in practice, an upper bound on the order $T$ is often dictated by computational complexity concerns. 

Throughout the paper, we denote by $\bm{\Xi} \subseteq \mathbb{R}^{d}$, with $d = (n+p)(T+1)$, the support of the unknown probability distribution $\mathbb{P}$, and we make the following assumption.
\begin{assumption}\label{ass:support_is_polyhedron}
    The support set $\bm{\Xi}$ is a full-dimensional polytope, that is, $\bm{\Xi}$ contains a $d$-dimensional ball with strictly positive radius.
\end{assumption}
We mainly focus on the case where $\bm{\Xi}$ is a compact polyhedron. Nevertheless, as we will highlight in the following, our results naturally extend to the most studied case $\bm{\Xi} = \mathbb{R}^{d}$.
%This assumption is quite mild as the order $T$ can be arbitrarily large. However, the optimal controller for $\mathbb{P}$ may become very complex. Note that stationarity is necessary for offline control design problems, such as the one presented in the next section. 

\begin{remark}
    Reconstructing $\bm{w}_{T}^{(i)}$ and $\bm{v}_{T}^{(i)}$ online, that is, given the corresponding input and output signals $(\bm{u}_T^{(i)}, \bm{y}_T^{(i)})$ only, is in general not possible. Still, the samples in \eqref{eq:training_samples} can be reconstructed from a series of offline experiments conducted in a laboratory environment, where the availability of additional sensors allows measuring the entire state trajectory $\bm{x}_T^{(i)}$ of the system. Alternatively, if $\bm{w}$ and $\bm{v}$ represent the effect of complex physical phenomena, e.g., wind gusts and turbulences, and sensor inaccuracy, respectively, the samples in \eqref{eq:training_samples} can also be generated using high-fidelity simulators.
\end{remark}

\subsection{Control objectives, policies, and uncertainty propagation}
\label{subsec_control_objectives_and_uncertainty}
We consider the problem of designing a stabilizing feedback policy that retains probabilistic safety and performance guarantees over an infinite horizon. Specifically, given $D \succ 0$, we measure the control cost that a policy $\bm{u} = \bm{\pi}(\bm{y})$ incurs whenever the joint disturbance sequence $\bm{\xi}$ realizes as:
\begin{equation*}
    J(\bm{\pi}, \bm{\xi}) = \lim_{T^\prime \to \infty} ~ \frac{1}{T^\prime} \sum_{t=0}^{T^\prime} 
    \begin{bmatrix}
        x_t^\top & u_t^\top    
    \end{bmatrix} D
    \begin{bmatrix}
        x_t \\ u_t
    \end{bmatrix}\,,
\end{equation*}
and we define polytopic safe sets $\mathcal{X} \subseteq \mathbb{R}^{n}$ and $\mathcal{U} \subseteq \mathbb{R}^{m}$ for the system state and input signals, respectively, as:
%\begin{subequations}
\begin{align*}
        \mathcal{X} &= \lt\{\! x \in \mathbb{R}^{n} \!: g_x(x) \!=\! \max_{j \in [J_x]} ~ G_{xj}^\top x - g_{xj} \leq 0\,, ~ J_x \!\in \mathbb{N}\rt\}\!, \\
        \mathcal{U} &= \lt\{\! u \in \mathbb{R}^{m} \!: g_u(u) \!=\! \max_{j \in [J_u]} ~ G_{uj}^\top u - g_{uj} \leq 0\,, ~ J_u \!\in \mathbb{N}\rt\}\!,
\end{align*}
where $[J_x]$ denotes the set $\{1,\dots,J_x\} \subset \bb N$ and similarly for $[J_u]$.
%\end{subequations}
Then, given a safety parameter $\gamma \in (0, 1)$ to control the level of tolerable constraint violations, we formulate the following stochastic optimization problem:
\begin{subequations}
\label{eq:original_stochastic_opt_prob}
    \begin{align}
    & ~ \bm{\pi}^\star = \argmin_{\bm{\pi}} ~ \mathbb{E}_{\mathbb{P}} \left[ J(\bm{\pi}, \bm{\xi}) \right] 
    \label{eq:original_stochastic_opt_objective}
    \\
    \label{eq:original_stochastic_opt_constraints}
    & \st ~ %\mathbb{P}[x_t \in \mathcal{X}\,, u_t \in \mathcal{U}] \geq 1 - \gamma\,, ~ \forall t \in \mathbb{N} \,.
    \text{CVaR}_{\gamma}^{\mathbb{P}} (\max\{g_x(x_t(\bm{\xi})), g_u(u_t(\bm{\xi}))\}) \!\leq\! 0\,,\!\!
\end{align}
\end{subequations}
where CVaR constraints are defined according to
\begin{equation}
    \label{eq:cvar_constraint_definition}
    %\text{CVaR}_{\gamma}^{\mathbb{Q}} (f(\bm{\xi})) = \inf_{\tau \in \mathbb{R}} ~ \tau + \frac{1}{\gamma} \mathbb{E}_{\mathbb{Q}}[(f(\bm{\xi}) - \tau)_{+}] \leq 0\,,
    \text{CVaR}_{\gamma}^{\mathbb{P}} (g(\bm{\xi})) = \inf_{\tau \in \mathbb{R}} ~ \tau + \frac{1}{\gamma} \mathbb{E}_{\mathbb{P}}[\max\{g(\bm{\xi}) - \tau, 0\}]\,,
\end{equation}
for any measurable function $g: \mathbb{R}^d \to \mathbb{R}$. We note that, besides implying that $\mathbb{P}[x_t \in \mathcal{X}\,, u_t \in \mathcal{U}] \geq 1 - \gamma$, \eqref{eq:original_stochastic_opt_constraints} also accounts for the expected amount of constraint violation in the $\gamma$ percent of cases where any such violation occurs \cite{van2015distributionally}. As such, the CVaR formulation reflects the observation that, in most control applications, severe breaches of the safety constraints often have far more adverse consequences than mild violations. 

As the probability distribution $\mathbb{P}$ is unknown, \eqref{eq:original_stochastic_opt_prob} cannot be addressed directly; instead we rely on a series of robust over-approximations. First, we construct the empirical probability distribution
\begin{equation}
    \label{eq:empirical_center_distribution}
    \widehat{\mathbb{P}} = \frac{1}{N} \sum_{i = 1}^N \delta_{\bm{\xi}_T^{(i)}}\,,
\end{equation}
where $\delta_{\bm{\xi}_T^{(i)}}$ denotes the Dirac delta distribution at the sample $\bm{\xi}_T^{(i)}$. %\textcolor{myr}{Replacing  $\mathbb{P}$ by $\widehat{\mathbb{P}}$ in \eqref{eq:original_stochastic_opt_prob} can simplify the problem greatly. However, due to this approximation, the resulting controller may not be optimal and safe.}
%
%\footnote{It is well-known that solving \eqref{eq:original_stochastic_opt_prob} upon naively replacing $\mathbb{P}$ with $\widehat{\mathbb{P}}$\textcolor{red}{, i.e., setting $\epsilon$ to zero,} may lead to decisions that exhibit poor out-of-sample performance, as the optimization process often amplifies any estimation error in $\widehat{\mathbb{P}}$.}
%
%.
It is well-known that solving \eqref{eq:original_stochastic_opt_prob} upon naively replacing $\mathbb{P}$ with $\widehat{\mathbb{P}}$ may lead to decisions that are unsafe or exhibit poor out-of-sample performance. Indeed, the optimization process often amplifies any estimation error in $\widehat{\mathbb{P}}$ – a phenomenon often termed the \emph{optimizer's curse} in the decision analysis jargon. To immunize against errors in $\widehat{\mathbb{P}}$, we replace the nominal objective \eqref{eq:original_stochastic_opt_objective} with the minimization of the worst-case expected loss over the set of probability distributions $\mathbb{B}_\epsilon\big(\widehat{\mathbb{P}}\big) \subseteq \mathcal{P}(\bm{\Xi})$ that are supported on $\bm{\Xi}$ and are sufficiently close to the empirical estimate $\widehat{\mathbb{P}}$. More formally, we define 
\begin{equation}
    \label{eq:ambiguity_set_definition}
    \mathbb{B}_\epsilon\big(\widehat{\mathbb{P}}\big) = \big\{\mathbb{Q} \in \mathcal{P}(\bm{\Xi}): W\big(\widehat{\mathbb{P}}, \mathbb{Q}\big) \leq \sqrt{\epsilon} \big\}\,,
\end{equation}
where $\epsilon \geq 0$ is the radius of the ambiguity set $\mathbb{B}_\epsilon\big(\widehat{\mathbb{P}}\big)$, and $W\big(\widehat{\mathbb{P}}, \mathbb{Q}\big)$ is the type-2 Wasserstein distance between $\widehat{\mathbb{P}}$ and $\mathbb{Q}$, i.e.,
\begin{equation}
    \label{eq:wasserstein_distance_definition}
    W^2\big(\widehat{\mathbb{P}}, \mathbb{Q}\big) = \inf_{\pi \in \Pi} \int_{\bm{\Xi}^2} \norm{\bm{\xi} - \bm{\xi}^\prime}_2^2 ~ \pi(\bm{\text{d}\xi}, \text{d}\bm{\xi}^\prime)\,,
\end{equation}
where $\Pi$ denotes the set of joint probability distributions of $\bm{\xi}$ and $\bm{\xi}^\prime$ with marginal distributions $\widehat{\mathbb{P}}$ and $\mathbb{Q}$, respectively \cite{mohajerin2018data, kuhn2019wasserstein}. In \eqref{eq:wasserstein_distance_definition}, the decision variable $\pi$ encodes a coupling or transportation plan for moving a mass distribution described by $\widehat{\mathbb{P}}$ to a distribution described by $\mathbb{Q}$. Thus, $\mathbb{B}_\epsilon\big(\widehat{\mathbb{P}}\big)$ can be interpreted as the set of distributions onto which $\widehat{\mathbb{P}}$ can be reshaped at a cost of at most $\epsilon$, where the cost of moving a unit probability mass from $\bm{\xi}$ to $\bm{\xi}^\prime$ is given by $\norm{\bm{\xi} - \bm{\xi}^\prime}_2^2$. We remark that safeguarding against the worst-case distribution in \eqref{eq:ambiguity_set_definition} mitigates the optimizer's curse and, if $\epsilon$ is appropriately tuned, yields a solution that retains finite-samples probabilistic guarantees in terms of out-of-samples control cost and constraint satisfaction. Indeed, for any $\beta > 0$, if $\mathbb{P}$ is light-tailed and the radius $\epsilon$ is a sublinearly growing function of $\frac{\log(1/\beta)}{N}$, then results from measure concentration theory ensure that $\mathbb{P}$ lies inside the ambiguity set \eqref{eq:ambiguity_set_definition} with confidence $1 - \beta$, see, \cite[Theorem~2]{fournier2015rate} and \cite[Theorem~18]{kuhn2019wasserstein}. 

% Second and similarly, we robustify against distributional uncertainty also the safety constraints \eqref{eq:original_stochastic_opt_constraints}, that is, we require the control policy $\bm{\pi}$ to be designed to guarantee that
% \begin{equation}
%     \label{eq:dro_chance_constraints}
%     \sup_{\mathbb{Q} \in \mathbb{B}_{\epsilon}\big(\widehat{\mathbb{P}}\big)} ~ \mathbb{Q}[x_t \in \mathcal{X}\,, u_t \in \mathcal{U}] \geq 1 - \gamma\,, ~ \forall t \in \mathbb{N}\,.
% \end{equation}
% As verifying chance constraints of the form \eqref{eq:original_stochastic_opt_constraints} is intractable under generic distributions \cite{van2015distributionally}, however, we consider the conservative yet convex approximation to \eqref{eq:original_stochastic_opt_constraints} provided by 

In principle one could address \eqref{eq:original_stochastic_opt_prob} through dynamic programming. since dynamic programming solutions are generally computationally intractable, we restrict our attention to policies $\bm{\pi} \in \bm{\Pi}_{\operatorname{L}}$ that are linear in the past observations $\bm{y}$, that is, $\bm{u} = \bm{\pi}(\bm{y}) = \bm{K}(z)\bm{y}$ for some real-rational proper transfer function $\bm{K}(z)$. Besides computational advantages, our choice is supported by recent advances in DR control, which show that linear policies are globally optimal for a generalization of the classical finite-horizon unconstrained LQG problem, with noise distributions constrained to a Wasserstein ambiguity set \eqref{eq:ambiguity_set_definition}, centered at a nominal Gaussian distribution \cite{taskesen2024distributionally}. 

%We are now in a position to state our problem of interest as:
With these two approximations, our problem of interest becomes:
\begin{subequations}
\label{eq:original_dro_opt_prob}
\begin{align}\label{eq:original_dro_opt_prob_a}
    & ~\inf_{\bm{\pi} \in \bm{\Pi}_{\operatorname{L}}} ~ \sup_{\mathbb{Q} \in \mathbb{B}_{\epsilon}\big(\widehat{\mathbb{P}}\big)} ~ \mathbb{E}_{\mathbb{Q}} \left[ J(\bm{\pi}, \bm{\xi}) \right]\\
    & \st ~ \sup_{\mathbb{Q} \in \mathbb{B}_{\epsilon}\big(\widehat{\mathbb{P}}\big)} ~ \text{CVaR}_{\gamma}^{\mathbb{Q}} (g_t(\bm{\xi})) \leq 0\,, ~ \forall t \in \mathbb{N}\,,
    \label{eq:original_dro_opt_prob_b}
\end{align}
\end{subequations}
where $g_t(\bm{\xi}) = \max\{g_{x}(x_t(\bm{\xi})), g_u(u_t(\bm{\xi}))\}$ for compactness. Note that the worst-case distributions in \eqref{eq:original_dro_opt_prob_a} and \eqref{eq:original_dro_opt_prob_b} may not coincide. Since in practice the uncertainty distribution is unique, this makes \eqref{eq:original_dro_opt_prob} potentially conservative, but is necessary to ensure safety for all distributions in $\mathbb{B}_\epsilon\big(\widehat{\mathbb{P}}\big)$ and not only the one maximizing the expected control cost.
%\textcolor{myr}{Note that the worst-case distrubutions for the cost and the constraints may be different. While in practice the distributions are equal, one must ensure that the constraints are satisfied for any distribution in $\mathbb{B}_\epsilon\big(\widehat{\mathbb{P}}\big)$. Otherwise, if $\bb P \neq \bb Q$, the resulting controller may not be safe.}
%
%
% \begin{remark}
%     It is well-known that solving \eqref{eq:original_stochastic_opt_prob} upon naively replacing $\mathbb{P}$ with $\widehat{\mathbb{P}}$\textcolor{red}{, i.e., setting $\epsilon$ to zero,} may lead to decisions that exhibit poor out-of-sample performance, as the optimization process often amplifies any estimation error in $\widehat{\mathbb{P}}$. Instead, for any $\beta > 0$, if $\mathbb{P}$ is light-tailed and the Wasserstein radius $\epsilon$ is chosen as a sublinearly growing function of $\frac{\log(1/\beta)}{N}$, then results from measure concentration theory ensure that $\mathbb{P}$ lie inside the ambiguity set \eqref{eq:ambiguity_set_definition} with confidence $1 - \beta$, see, \cite[Theorem~2]{fournier2015rate} and \cite[Theorem~18]{kuhn2019wasserstein}. Therefore, in this case, any solution to \eqref{eq:original_dro_opt_prob} retains finite-samples probabilistic guarantees in terms of out-of-samples control cost and constraint satisfaction.
% \end{remark}

\subsection{Expressivity of the problem formulation and related work}
The solution to the DRO problem \eqref{eq:original_dro_opt_prob} depends on the radius $\epsilon$ defining \eqref{eq:ambiguity_set_definition}. In particular, we argue that \eqref{eq:original_dro_opt_prob} without constraints generalizes classical $\mathcal{H}_2$ and $\mc H_\infty$ control problems, which correspond to the limit cases of $\epsilon$ approaching $0$ and $\infty$, respectively.
%\textcolor{red}{The solution to the DRO problem \eqref{eq:original_dro_opt_prob} depends on the value of $\epsilon$. In this section, we show that, in limit cases, \eqref{eq:original_dro_opt_prob} corresponds to classical control design problems such as LQG and $\mc H_\infty$.}

If $\epsilon = 0$, the Wasserstein ball $\mathbb{B}_{\epsilon}\big(\widehat{\mathbb{P}}\big)$ reduces to the singleton $\big\{\widehat{\mathbb{P}}\big\}$, and the supremum disappears, leading to a Monte-Carlo-based control design problem \cite{badings2022sampling, blackmore2010probabilistic}. Moreover, in the absence of constraints, because $J(\bm{\pi}, \bm{\xi})$ is quadratic, the resulting optimal policy is the LQG controller designed for $\bb P_{\mc N} = \mc N(\bb E_{\xi \sim \widehat{\bb P}}[\xi], \operatorname{var}_{\xi \sim \widehat{\bb P}}[\xi])$ \cite{hassibi1999indefinite}. Indeed, since both the system dynamics and the control policy are linear%, one has\footnote{Both expectations are equal to the same linear transformation of the first and second moments of $\widehat{\bb P}$ and $\bb P_{\mc N}$, which are equal.}
\begin{align*}
    \mathbb{E}_{\mathbb{P}_{\mc N}} \left[ J(\bm{\pi}, \bm{\xi}) \right] = \mathbb{E}_{\widehat{\mathbb{P}}} \left[ J(\bm{\pi}, \bm{\xi}) \right],
\end{align*}
since both reduce to the same transformation of the first and second moments, which means that the $\argmin_\pi$ of both expectations is also the same.

% If the statistics of $\bm{\xi}$ are known, then \eqref{eq:original_dro_opt_prob} with $\widehat{\mathbb{P}} = \mathbb{P}$ and $\epsilon = 0$ reduces to $\mathcal{H}_2$ control, and to LQG control if $\mathbb{P}$ is further assumed to be Gaussian \cite{hassibi1999indefinite}. Similarly, the distributionally robust LQG formulation recently studied in \cite{taskesen2024distributionally} corresponds to the case where one chooses $\widehat{\mathbb{P}}$ to be Gaussian and $\epsilon \geq 0$. 

If $\epsilon$ is very large and $\bm{\Xi}$ is compact, \eqref{eq:original_dro_opt_prob} can also be seen as a generalization of $\mathcal{H}_\infty$ synthesis methods \cite{hassibi1999indefinite, zhou1998essentials}. In fact, in the limit case of $\epsilon \to \infty$ and no matter how $\widehat{\mathbb{P}}$ is constructed, \eqref{eq:ambiguity_set_definition} contains all probability distributions $\mathcal{P}(\bm{\Xi})$ supported on $\bm{\Xi}$, including the degenerate distribution taking value at the most-averse $\bm{\xi}$ almost surely. 

Intermediate values of $\epsilon$ instead yield solutions that leverage the observations \eqref{eq:training_samples} to trade-off robustness to adversarial perturbations or distribution shifts against performance under distributions in a neighborhood of $\widehat{\mathbb{P}}$. 

We conclude this section by remarking that, unlike \cite{taskesen2024distributionally}, we do not assume that the center $\widehat{\mathbb{P}}$ of the ambiguity set is Gaussian, and instead use the empirical estimate \eqref{eq:empirical_center_distribution} to provide greater design flexibility. In fact, if $\mathbb{P}$ is, e.g., bimodal, then the Wasserstein distance between $\mathbb{P}$ and its closest Gaussian distribution $\mathbb{G}$ will generally be larger than the Wasserstein distance between $\mathbb{P}$ and its empirical estimate $\widehat{\mathbb{P}}$. In turn, this implies that a larger radius $\epsilon$ needs to be used to ensure that $\mathbb{P} \in \mathbb{B}_{\epsilon}(\mathbb{G})$ with high probability, leading to a more conservative design. We will return on this point in Section~\ref{sec:numerical}, where we present numerical simulations to support the usage of \eqref{eq:empirical_center_distribution} as center distribution for the Wasserstein ambiguity set \eqref{eq:ambiguity_set_definition} to improve closed-loop performance.

\section{Background}
In this section, we recall useful technical preliminaries, and we discuss the design assumptions that will allow us to compute an approximate solution to \eqref{eq:original_dro_opt_prob} through convex programming. We start by reviewing the system-level approach to controller synthesis \cite{wang2019system}, and then present recent duality results from the DRO literature \cite{shafieezadeh2023new}.

\subsection{System-level synthesis}
The System-Level Synthesis (SLS) framework provides a convex parameterization of the non-convex set of internally stabilizing controllers $\bm{K}(z)$, allowing one to reformulate many control problems as optimization over the closed-loop responses $\bm{\Phi}_{xw}(z), \bm{\Phi}_{xv}(z), \bm{\Phi}_{uw}(z)$ and $\bm{\Phi}_{uv}(z)$ that map $\bm{w}$ and $\bm{v}$ to $\bm{x}$ and $\bm{u}$. To define these maps, we first combine the linear output feedback policy $\bm{u} = \bm{K}(z) \bm{y}$ with the $z$-transform of the state dynamics in \eqref{eq:system_dynamics_definition} to obtain: 
\begin{equation*}
    (z I - (A + B \bm{K}(z) C)) \bm{x} = \bm{w} + B \bm{K}(z) \bm{v}\,. %~ \bm{u} = \bm{K}(z) C \bm{x} + \bm{v}\,,
\end{equation*}
Then, since the transfer matrix $(z I - (A + B \bm{K}(z) C))$ is invertible for any proper controller $\bm{K}(z)$, we have
\begin{align*}
    \begin{bmatrix}
        \bm{x}\\
        \bm{u}
    \end{bmatrix}
    &= 
    \begin{bmatrix}
        \bm{\Phi}_{xw}(z) & \bm{\Phi}_{xv}(z)\\
        \bm{\Phi}_{uw}(z) & \bm{\Phi}_{uv}(z)
    \end{bmatrix}
    \begin{bmatrix}
        \bm{w}\\
        \bm{v}
    \end{bmatrix}
    =
    \bm{\Phi}_{\xi}(z) \bm{\xi}\,,\\
    &=
    \begin{bmatrix}
        (zI -  (A + B \bm{K}(z) C))^{-1} & \bm{\Phi}_{xw}(z) B \bm{K}(z)\\
        \bm{K}(z) C \bm{\Phi}_{xw}(z) & \bm{\Phi}_{uw}(z) + z\bm{K}(z)
    \end{bmatrix}
    \bm{\xi}\,.
\end{align*}
In particular, we note that causality of $\bm{K}(z)$ implies causality of $\bm{\Phi}_{uv}$ and strict causality of $\bm{\Phi}_{xw}, \bm{\Phi}_{xv}$ and $\bm{\Phi}_{uw}$. Further, one can show that the affine subspace defined by the achievability constraints%The SLS framework establishes that any 
\begin{subequations}
\label{eq:achievability_constraints}
\begin{align}
    \begin{bmatrix}
        zI - A & -B 
    \end{bmatrix}
    &\bm{\Phi}_{\xi}(z) =
    \begin{bmatrix}
        I & 0
    \end{bmatrix}\,,\\
    &\bm{\Phi}_{\xi}(z)
    \begin{bmatrix}
        zI - A\\
        -C
    \end{bmatrix} =
    \begin{bmatrix}
        I \\
        0
    \end{bmatrix}\,,
\end{align}
\end{subequations}
characterizes all and only the system responses $\bm{\Phi}_{\xi}(z)$ that are achievable by an internally stabilizing controller $\bm{K}(z)$ \cite{wang2019system}. Despite the fact that  \eqref{eq:achievability_constraints} defines a convex feasible set, minimizing a given convex objective with respect to the closed-loop transfer matrix $\bm{\Phi}_{\xi}(z) = \sum_{k = 0}^{\infty} \Phi(k) z^{-k}$ proves challenging, as the resulting optimization problem remains infinite dimensional. Following \cite{wang2019system, anderson2019system}, we rely on a FIR approximation of $\bm{\Phi}_{\xi}(z)$ to recover tractability, i.e., we restrict our attention to the truncated system response $\bm{\Phi}_{\xi}^{T}(z) = \sum_{k = 0}^{T} \Phi(k) z^{-k}$. We remark that controllability and observability of \eqref{eq:system_dynamics_definition} ensure that \eqref{eq:achievability_constraints} admits a FIR solution \cite[Theorem~4]{wang2019system}. At the same time, since $\bm{\Phi}_{\xi}(z)$ represents a stable map, the effect of this FIR approximation becomes negligible if $T$ is sufficiently large; for the case of LQR, for instance, it was shown that the performance degradation relative to the solution to the infinite-horizon problem decays exponentially with $T$, see \cite[Section~5]{dean2020sample}.

In line with the FIR approximation, we let:
\begin{align*}
    \bm \Phi_x &= [\Phi_{xw}(T), \dots, \Phi_{xw}(0),\Phi_{xv}(T), \dots, \Phi_{xv}(0)]\,,\\
    \bm \Phi_u &= [\Phi_{uw}(T), \dots, \Phi_{uw}(0),\Phi_{uv}(T), \dots, \Phi_{uv}(0)]\,,
\end{align*}
and define $\bm \Phi = [\bm \Phi_x^\top, \bm \Phi_u^\top]^\top$ for compactness. With this notation in place, for any $t \geq T$, we have that:
%Writing the truncated responses in matrix form, and splitting $\bm \Phi(z)$ into its $x$ and $u$ components, we obtain
\begin{equation}
    \label{eq:FIR_xt_and_ut_from_noise_sequence}
    x_t = \bm \Phi_x \bm \xi_{t-T:t}\,, ~
    u_t = \bm \Phi_u \bm \xi_{t-T:t}\,,
\end{equation}
where $\bm \xi_{t-T:t} = [w_{t-T}^\top, \dots, w_{t}^\top, v_{t-T}^\top, \dots, v_{t}^\top]^\top$ collects the last $T+1$ realizations of the process and measurement noises. The closed loop maps $\bm \Phi_x$ and $\bm \Phi_u$ can be utilized to compute a practical implementation of the controller $\bm K(z)$ as shown in Appendix \ref{app:controller_implementation}.

\subsection{A stationary control problem}

As we consider an infinite horizon control problem, we focus on the steady state behavior of the system, and we are instead less interested in the transient behavior, compare also \cite{van2015distributionally}. Motivated by this and to take full advantage of the stationarity properties of $\xi_t$ in Assumption \ref{ass:noise_follows_stationary_process}, we focus on designing an optimal safe controller to operate the system for $t \geq T$ only. In this setting, we proceed to show that the distributionally robust worst-case control cost and CVaR constraints admit finite-dimensional representations
%we only design an optimal controller for $t \geq T$ and make the following assumption.
\begin{assumption}\label{ass:system_initialized}
    The system is initialized by an external controller with $x_0, \dots, x_{T-1} \in \mathcal{X}$ and $u_0, \dots, u_{T-1} \in \mathcal{U}$.
\end{assumption}

%We proceed to show that the distributionally robust worst-case control cost and $\operatorname{CVaR}$ constraints admit finite-dimensional representations. This allows us to synthesize output-feedback controllers with distributional robustness guarantees that are further internally stabilizing.
%
We therefore redefine the optimization cost $J$ in \eqref{eq:original_dro_opt_prob_a} as \begin{equation*}
    J_T(\bm{\pi}(\bm \Phi), \bm{\xi}) = \lim_{T^\prime \to \infty} \! \frac{1}{T^\prime \!-\! T} \! \sum_{t=T}^{T^\prime} 
    \bm \xi_{t-T:t}^\top \bm \Phi^\top D \bm \Phi \bm \xi_{t-T:t}\,.
\end{equation*}
Note that under Assumption \ref{ass:noise_follows_stationary_process}),the uncertainty $\bm{\xi}_{t-T:t}$ follows the same distribution for all $t$. Hence, $J_T$ satisfies
\begin{align}
    \bb E_{\bb Q} &[J(\bm \pi(  \bm \Phi), \bm\xi)]
    \nonumber \\
    &= 
    \lim_{T^\prime \to \infty} \!
    \bb E_{\!\!\!\substack{\xi_{0:T} \sim \bb Q
    \vspace{-4pt}
    \\
    \vdots
    \\
    \xi_{T^\prime-T:T^\prime} \sim \bb Q}}
    \left[ \frac{1}{T^\prime \!-\! T} \! 
    \sum_{t=T}^{T^\prime} \bm \xi_{t-T:t}^\top \bm \Phi^\top D \bm \Phi \bm \xi_{t-T:t}\right]\!\!,
    \nonumber \\ \label{eq:quad_expectation_risk_def}
    &= \bb E_{\xi_T \sim \bb Q} [\bm \xi_T^\top \bm \Phi^\top D \bm \Phi \bm \xi_T]\,.
\end{align}

%\end{subequations}
%In particular, we note that $\bm{\Phi}_x^\dagger = \bm{\Phi}_x^{-1}$ since $\bm{\Phi}_x$ inherits a causal structure from the definition of the controller $\bm{K}$ and is thus invertible by construction.

%To make controller synthesis tractable, we can solve a finite approximation to optimization problem (2.3) wherein we only optimize over the first L impulse response elements of Φx and Φu, treating them as finite impulse response (FIR) filters. We show that in this setting, the optimization variables and constraints admit finite-dimensional representations. We first reformulate the constraints. Starting with the affine constraint, we have for

%In this section, we extend the results of Section 2 to the infinite horizon setting, and propose a novel parameterization of internally stabilizing state-feedback controllers centered around system responses, which are defined by the closed loop maps from process disturbances to state and control

In light of this, the problem statement \eqref{eq:original_dro_opt_prob}  for DRInC synthesis can be reformulated as finding the optimal FIR map $\bm\Phi^\star$ of length $T+1$ given by
% \begin{subequations}
% \label{eq_infinite_dimensional_}
%  \begin{align}
%  \label{eq_risk_def}
%     \!\!&\lim_{t\rightarrow \infty} \frac{1}{t}
%     \sum_{k=0}^t \sup_{
%     \substack{\mathbb{Q}_x \in \mathbb{B}_{x_k}(\bm{\Phi}_x) 
%     \\ 
%     \mathbb{Q}_u \in \mathbb{B}_{u_k}(\bm{\Phi}_u)
%     %\\
%     %\mc A \mt{ s.t. } \|\mc A - \mc F\|_F \leq \varepsilon_A
%     %\\
%     %\mc C \mt{ s.t. } \|\mc C - \mc U\|_F \leq \varepsilon_A
%     }} 
%     \bb E_{\substack{ x_k \sim \mathbb{Q}_x \\  u_k \sim \mathbb{Q}_u} }
%      \lt[ x_k^\top \!,\,  u_k^\top\rt] \! D \! \lt[\matb  x_k \\ u_k \mate\rt] \!,\\
% %\end{align}
% %\label{eq:conditional_value_at_risk_safety_constraints_inf_horizon_both}
% %\begin{align} \label{eq:conditional_value_at_risk_safety_constraints_inf_horizon}
%     \!\!& \st \sup_{\mathbb{Q} \in \mathbb{B}_{x_t}(\bm{\Phi}_x)} \! \operatorname{CVaR}_{1 - \gamma}^{{x_t} \sim \bb Q}({a}_{xj}^\top {x_t} + b_{xj}) \leq 0, \forall j \in [J_x],\!
%     \\
%     \!\!&\sup_{\mathbb{Q} \in \mathbb{B}_{u_t}(\bm{\Phi}_u)} \! \operatorname{CVaR}_{1 - \gamma}^{{u_t} \sim \bb Q}({a}_{uj}^\top {u_t} + b_{uj}) \leq 0, \forall j \in [J_u],\!
% %\label{eq:conditional_value_at_risk_safety_constraints_inf_horizon_u}
% \end{align}
% \end{subequations}
 \begin{align}\label{eq_risk_def}
    \bm{\Phi}^\star = \argmin_{\bm{\Phi} \mt{ achievable}}
    \sup_{
    \mathbb{Q} \in \mathbb{B}_{\epsilon}\big(\widehat{\mathbb{P}}\big) 
    }
    \bb E_{\bm \xi_{T} \sim \mathbb{Q}}
    [ \bm \xi_{T}^\top \bm \Phi^\top D \bm \Phi \bm \xi_{T}] \,,
\end{align}
while satisfying the achievability constraints \eqref{eq:achievability_constraints} as well as conditional value-at-risk constraints
\begin{align} 
\label{eq:conditional_value_at_risk_safety_constraints_inf_horizon_both}
%\label{eq:conditional_value_at_risk_safety_constraints_inf_horizon}
    \!\!&\sup_{
    \mathbb{Q} \in \mathbb{B}_{\epsilon}\big(\widehat{\mathbb{P}}\big) 
    } \! \text{CVaR}_{\gamma}^{{\bm\xi_T} \sim \bb Q}({G}_{j}^\top \bm \Phi \bm \xi_T - g_{j}) \leq 0, \forall j\! \in [J],\!
    % \\
    % \!\!&\sup_{
    % \mathbb{Q} \in \mathbb{B}_{\epsilon}\big(\widehat{\mathbb{P}}\big) 
    % } \! \operatorname{CVaR}_{1 - \gamma}^{{\bm \xi_T} \sim \bb Q}({a}_{uj}^\top \bm \Phi_u \bm \xi_T + b_{uj}) \leq 0, \forall j\! \in [J_u],\!
%\label{eq:conditional_value_at_risk_safety_constraints_inf_horizon_u}
\end{align}
where $J = J_x + J_u$ and $[J] = \{1,\dots,J\}$ enumerates all the constraints on $[x^\top, u^\top]$, defined by 
\begin{align*}
    G = \lt[\matb G_x & 0 \\ 0 & G_u \mate\rt], \; g = \lt[\matb g_x \\ g_u \mate\rt]\!.
\end{align*}

% \begin{align}\label{eq:cvar_constraint_definition}
%     \operatorname{CVaR}_{1 - \gamma}^{{x} \sim \bb Q} (a^\top \! x \!+\! b) = \inf_{\tau} \tau \!+\! \frac{1}{\gamma} \bb E_{x \sim \bb Q}[\max\{0, a^\top x \!+\! b \!-\! \tau\}].
% \end{align}

% NOTE: (x^2+1)/x^2 is an illustration of the adversary cost: control cost is x^2+empirical and transport cost just x^2.
We highlight that while \eqref{eq:original_dro_opt_prob_a} is an infimum problem, the minimum in \eqref{eq_risk_def} is attained. Indeed, by Assumption \ref{ass:support_is_polyhedron}, there always exists a distribution $\widehat{\bb Q}$ such that $\bb E_{\widehat{\bb Q}} [J(\bm \pi( \bm \Phi), \bm\xi)] $ is strongly convex in $\bm\Phi$ (e.g., an empirical distribution containing samples that form a basis for $\bb R^d$). Moreover, since $\bb E_{\widehat{\bb Q}} [J(\bm \pi( \bm \Phi), \bm\xi)] \leq \sup_{ \mathbb{Q} \in \mathbb{B}_{\epsilon}\big(\widehat{\mathbb{P}}\big) } \bb E_{\bb Q} [J(\bm \pi( \bm \Phi), \bm\xi) ]$ by definition, the supremum in \eqref{eq_risk_def} is strongly convex and the minimizer $\bm \Phi^\star$ is attainable. Instead, as both the transport and control costs grow quadratically, $\sup_{\mathbb{Q} \in \mathbb{B}_{\epsilon}\big(\widehat{\mathbb{P}}\big)} ~ \mathbb{E}_{\mathbb{Q}} \left[ J(\bm{\pi}(\bm \Phi), \bm{\xi}) \right]$ may be unattainable \cite{shafieezadeh2023new}\footnote{The ratio between the growth rates of the loss function and the transport cost is crucial in DRO problems. If the control cost grows faster than the transport cost, the adversary can make the control cost diverge by moving an infinitesimal amount of mass very far away from the empirical distribution. Conversely, if the control cost grows slower, there is always a point at which the amount of transported mass is too small to further increase the risk and the supremum is attained. This is the case for the constraints, as their cost grows linearly.}. In what follows, we use the recent advances in DRO theory presented in \cite{shafieezadeh2023new} to reformulate the control design problem as a finite-dimensional and tractable problem.

\subsection{Strong duality for DRO of piecewise linear objectives}\label{subsec_present_prop_212}
The minimization \eqref{eq_risk_def} subject to \eqref{eq:conditional_value_at_risk_safety_constraints_inf_horizon_both} is infinite-dimensional and cannot be directly approached. The next proposition, which serves as a starting point for our derivations in Section~\ref{sec:main_results}, shows how DRO of piecewise linear objectives can be recast as a sample-based finite-dimensional convex program.

\begin{proposition}\label{prop:cvar_original_deterministic_constraint_bounded_support}
Let $a_j \in \bb R^d$ and $b_j \in \bb R$, for $j  = 1, \dots, J$, characterize the piece-wise linear cost $\max_{j \in [J]}  a_j^\top \bm\xi_T + b_j$, and let $H$ and $h$ characterize the support $\bm\Xi$ as $\{\bm{\xi} \in \mathbb{R}^{d} : {H} \bm{\xi} \leq {h}\}$. If Assumption \ref{ass:support_is_polyhedron} holds and $\epsilon > 0$, then the risk:
\begin{align}\label{eq:prop_dro_general_risk_def}
    \sup_{\mathbb{Q} \in \mathbb{B}_{\epsilon}\big(\widehat{\mathbb{P}}\big)} \bb E_{\xi_T \sim \bb Q} \left[\max_{j \in [J]}  a_j^\top \bm\xi_T + b_j\right]\,,
\end{align}
can be equivalently computed as:
\begin{subequations}\label{eq:prop_dro_general_risk_dual}
    \begin{align}
    \label{eq:prop_dro_general_risk_dual_cost}
    &\inf_{s^{(i)},
    \lambda \geq 0, \kappa_{ij} \geq 0}  
    \lambda \epsilon + \frac{1}{N} \!\sum_{i \in [N]} s^{(i)}\,,\; \st
    \\ \label{eq:eq:prop_dro_general_risk_dual_inequality}
    & s^{(i)} \geq  b_j \!+\! \frac{\|a_j\|_2^2}{4\lambda}  + a_j^\top \bm \xi_T^{(i)}
    \\ \nonumber
    &\quad\quad\; +\! \frac{1}{4\lambda} \kappa_{ij}^\top H H^\top \kappa_{ij} \!-\! \frac{1}{2\lambda} a_{j}^\top H^\top \kappa_{ij} + \big(h - H \bm \xi_T^{(i)} \big)^{\!\!\top} \kappa_{ij},
    \end{align}
\end{subequations}
    for all $i = 1, \dots, N$ and $j = 1, \dots, J$.
\end{proposition}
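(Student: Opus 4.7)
The plan is to reduce the proposition to a direct application of the Wasserstein DRO strong-duality theorem and then to dualize the resulting inner maximization, which is a concave QP over a polytope.

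First, I would invoke the strong duality result for type-2 Wasserstein DRO with nominal empirical distribution (e.g., Theorem 1 of \cite{shafieezadeh2023new} or the corresponding statement in \cite{mohajerin2018data, kuhn2019wasserstein}), which guarantees that, because $\ell(\xi) := \max_{j \in [J]} a_j^\top \xi + b_j$ is upper semicontinuous and grows at most linearly while the transport cost is quadratic, and because $\epsilon > 0$ so Slater-type conditions in the distributional dual hold, one has
\begin{equation*}
    \sup_{\mathbb{Q} \in \mathbb{B}_{\epsilon}(\widehat{\mathbb{P}})} \mathbb{E}_{\mathbb{Q}}[\ell(\bm\xi_T)]
    = \inf_{\lambda \geq 0} \left\{ \lambda \epsilon + \frac{1}{N}\sum_{i=1}^{N} s^{(i)}(\lambda) \right\},
\end{equation*}
where $s^{(i)}(\lambda) = \sup_{\bm\xi \in \bm\Xi} \{ \ell(\bm\xi) - \lambda \|\bm\xi - \bm\xi_T^{(i)}\|_2^2 \}$. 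Since $\bm\Xi$ is compact by Assumption~\ref{ass:support_is_polyhedron}, $s^{(i)}(\lambda)$ is finite for every $\lambda \geq 0$, which is precisely where having a bounded support helps as compared to the unbounded support case.

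Second, I would swap the maximum over $j$ with the supremum over $\bm\xi$ and rewrite $s^{(i)}(\lambda) = \max_{j \in [J]} s^{(i)}_j(\lambda)$, where
\begin{equation*}
    s^{(i)}_j(\lambda) = \sup_{\bm\xi : H\bm\xi \leq h} \; b_j + a_j^\top \bm\xi - \lambda \|\bm\xi - \bm\xi_T^{(i)}\|_2^2.
\end{equation*}
For $\lambda > 0$ this is the maximization of a strictly concave quadratic over a full-dimensional polytope, so Slater's condition (guaranteed by Assumption~\ref{ass:support_is_polyhedron}) yields strong Lagrangian duality. Introducing multipliers $\kappa_{ij} \geq 0$ for $H\bm\xi \leq h$, setting $\nabla_{\bm\xi} L = 0$ gives $\bm\xi^\star = \bm\xi_T^{(i)} + \tfrac{1}{2\lambda}(a_j - H^\top \kappa_{ij})$, and substituting back and collecting terms yields exactly the right-hand side of \eqref{eq:eq:prop_dro_general_risk_dual_inequality} as the optimal dual value of $s^{(i)}_j(\lambda)$. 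Finally, representing the epigraph $s^{(i)} \geq s^{(i)}_j(\lambda)$ for all $j$ via the linear-in-$\kappa_{ij}$ inequality (relaxing equality to inequality is without loss as we minimize in $s^{(i)}$) yields \eqref{eq:prop_dro_general_risk_dual}.

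Two technical points deserve attention. The case $\lambda = 0$ must be handled separately: the inner problem becomes linear in $\bm\xi$ over the bounded polytope $\bm\Xi$, so $s^{(i)}_j(0)$ is finite and attained at a vertex, and a limiting argument (or a direct inspection of the infimum in $\lambda$) shows that this boundary case is never optimal as long as $\epsilon > 0$, so we can safely restrict to $\lambda > 0$ in the expression involving $1/(4\lambda)$. Second, one must verify that the outer minimization in \eqref{eq:prop_dro_general_risk_dual_cost} over $(s^{(i)}, \lambda, \kappa_{ij})$ is equivalent to the infimum-sup form produced by duality; this follows because, for fixed $\lambda$, the minimization of $s^{(i)}$ subject to the epigraph constraints \eqref{eq:eq:prop_dro_general_risk_dual_inequality} reproduces $\max_j s^{(i)}_j(\lambda)$ exactly.

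The main obstacle is justifying the first step, i.e., that strong duality between the distributional primal and the one-dimensional dual in $\lambda$ holds when $\bm\Xi$ is a compact polytope rather than all of $\bb R^d$. The cleanest route is to cite the recent strong-duality statement in \cite{shafieezadeh2023new}, whose assumptions (finitely supported nominal distribution, polyhedral support containing each $\bm\xi_T^{(i)}$ in its interior, and $\epsilon > 0$) are all met here; the remainder of the argument is a routine Lagrangian dualization of a concave QP.
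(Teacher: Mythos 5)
Your proposal is correct and follows essentially the same route as the paper: both invoke the strong-duality result of the cited DRO reference to reduce the risk to $\inf_{\lambda\geq 0}\,\lambda\epsilon+\tfrac{1}{N}\sum_i s^{(i)}(\lambda)$ with $s^{(i)}(\lambda)=\sup_{\bm\xi\in\bm\Xi}\ell(\bm\xi)-\lambda\|\bm\xi-\bm\xi_T^{(i)}\|_2^2$, and then eliminate the inner supremum. The only difference is in that second step: the paper applies the conjugate-based reformulation of \cite[Proposition~2.12]{shafieezadeh2023new} and reads off $(-\ell_j)^\star$, $c^\star$, and $f_k^\star$, whereas you dualize each per-piece concave QP explicitly via its Lagrangian and the stationarity condition $\bm\xi^\star=\bm\xi_T^{(i)}+\tfrac{1}{2\lambda}(a_j-H^\top\kappa_{ij})$; these are equivalent, your substitution does reproduce \eqref{eq:eq:prop_dro_general_risk_dual_inequality} exactly, and your version is arguably more self-contained. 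One small imprecision: your claim that $\lambda=0$ is ``never optimal'' for $\epsilon>0$ is false when $\epsilon$ is large enough that the Wasserstein constraint is slack (which can happen precisely because $\bm\Xi$ is bounded); the correct justification is that, since $\bm\Xi$ is compact, $s^{(i)}(\lambda)$ is continuous at $\lambda=0^+$, so restricting the infimum to $\lambda>0$ does not change its value.
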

\begin{proof}
    This proposition is a special case of \cite[Proposition~2.12]{shafieezadeh2023new}. For the sake of clarity, we report detailed derivations in Appendix \ref{app:proof_prop_bounded_support}.
\end{proof}
Proposition \ref{prop:cvar_original_deterministic_constraint_bounded_support} uses strong duality to establish an equivalence between \eqref{eq:prop_dro_general_risk_dual} and \eqref{eq:prop_dro_general_risk_def}. In particular, the decision variables $\lambda \in \bb R$ and $\kappa_{ij}\in \bb R^{n_H}$, where $n_H$ is the number of rows in $H$, correspond to the Lagrange multipliers associated with the constraints $\bb Q \in \bb B_\epsilon\big(\widehat{\mathbb{P}}\big)$ and $\bm \xi_T \in \bm \Xi$, respectively. The optimal value of $\lambda$ can thus be interpreted as the shadow cost of robustification, i.e., the amount by which the risk $\bb E_{\xi_T \sim \bb Q} [\max_{j \in [J]}  a_j^\top \bm\xi_T + b_j]$ increases for each unit increase of $\epsilon$, written mathematically as \eqref{eq:shadow_cost_def} below. The variables $s^{(i)} \in \bb R$ instead represent epigraph variables. %In the next section, we show how to use and extend Proposition \ref{prop:cvar_original_deterministic_constraint_bounded_support} beyond piecewise linear objectives to address the control design problem \eqref{eq_risk_def}.

\section{Main Results}
\label{sec:main_results}
In this section, we present our main results. Motivated by the observation that the operational costs of engineering applications usually relate to energy consumption and are thus often modeled using quadratic functions, we proceed to extend the results of Proposition \ref{prop:cvar_original_deterministic_constraint_bounded_support} beyond piecewise linear objectives. %Then, we minimization \eqref{eq_risk_def} subject to \eqref{eq:conditional_value_at_risk_safety_constraints_inf_horizon_both}

\subsection{Non-convexity challenges}\label{subsec_quad_dro}
%In engineering applications, control and operational costs are very often quadratic, due to the quadratic nature of the energy usage of a system. 
Note that the strong duality results of \cite[Proposition~2.12]{shafieezadeh2023new} do not directly apply to \eqref{eq_risk_def}, as the objective $J(\bm{\pi}(\bm \Phi), \bm{\xi})$ is not piece-wise concave. Therefore, unless $\bm{\Xi}$ equals the entire space $\mathbb{R}^d$, an extension of current state-of-the-art results in DRO is required to minimize a risk of the form%before one can use $J(\bm{\pi}(\bm \Phi), \bm{\xi})$ as defined in Section \ref{subsec_system_dyn_and_uncertainty} and minimize a risk of the form
\begin{align}\label{eq:prop_dro_general_quad_risk_def}
    \mc R_\epsilon(Q) := \sup_{\mathbb{Q} \in \mathbb{B}_{\epsilon}\big(\widehat{\mathbb{P}}\big)}  \bb E_{\bm\xi_T \sim \bb Q} ~  [\bm\xi_T^\top Q \bm\xi_T]\,,
\end{align}
% where $Q \succeq 0$.
%Nonetheless, eliminating the concavity condition with respect to the noise variable $\bm\xi_T$ can render the problem \eqref{eq_risk_def} non-convex. In \cite{kuhn2019wasserstein}, the authors show that there is a hidden convexity when $\bm \Xi = \bb R^d$. However, this convexity does not hold in general. For example, in the situation drawn in Fig. \ref{fig:nonconvex_adversarial_play}, one can observe that if the constraint $\bb Q \in \mathbb{B}_\epsilon(\delta)$ is not active, then the adversary must maximize a convex Quadratic Program (QP), which is not convex. However, if the constraint is active, the problem \eqref{eq:prop_dro_general_quad_risk_def} amounts to a Quadratically Constrained Quadratic Program (QCQP), which has a tight convex relaxation as a Semi-Definite Program (SDP).
where $Q \succeq 0$. We start by observing that if the loss is not concave with respect to $\bm\xi_T$, then the optimization problem in \eqref{eq:prop_dro_general_quad_risk_def}  may not be convex. In fact, while \cite{kuhn2019wasserstein} shows that there is a hidden convexity when $\bm \Xi = \bb R^d$, this result does not hold in general. To illustrate this point, consider for example the situation drawn in Fig. \ref{fig:nonconvex_adversarial_play}, where $\bm{\Xi} \subset \mathbb{R}$. One can observe that if the constraint $\bb Q \in \mathbb{B}_\epsilon(\delta)$ is active, then the problem \eqref{eq:prop_dro_general_quad_risk_def} amounts to a Quadratically Constrained Quadratic Program (QCQP), which admits a tight convex relaxation as a Semi-Definite Program (SDP) \cite{boyd2004convex}. Conversely, however, when the constraint $\bb Q \in \mathbb{B}_\epsilon(\delta)$ is not active, \eqref{eq:prop_dro_general_quad_risk_def} amounts to maximize a convex Quadratic Program (QP), which is not convex. 
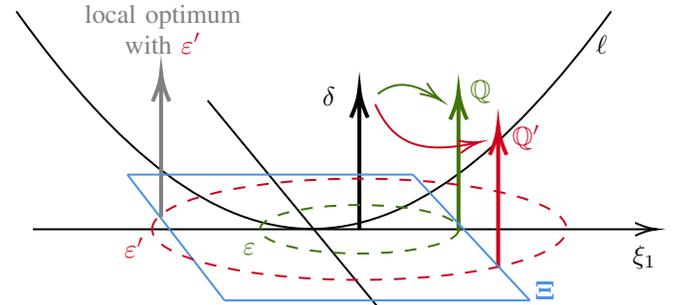
\begin{figure}[h]
    \centering
    \tikzset{every picture/.style={line width=0.75pt}} %set default line width to 0.75pt        

\begin{tikzpicture}[x=0.75pt,y=0.75pt,yscale=-1,xscale=1]
%uncomment if require: \path (0,314); %set diagram left start at 0, and has height of 314

%Straight Lines [id:da9181233828196975] 
\draw    (35.33,150) -- (348,150) ;
\draw [shift={(350,150)}, rotate = 180] [color={rgb, 255:red, 0; green, 0; blue, 0 }  ][line width=0.75]    (10.93,-3.29) .. controls (6.95,-1.4) and (3.31,-0.3) .. (0,0) .. controls (3.31,0.3) and (6.95,1.4) .. (10.93,3.29)   ;
%Straight Lines [id:da3698572083305791] 
\draw [color={rgb, 255:red, 74; green, 144; blue, 226 }  ,draw opacity=1 ][line width=0.75]    (132,185.91) -- (83,122.41) ;
%Straight Lines [id:da3442958350350587] 
\draw [color={rgb, 255:red, 0; green, 0; blue, 0 }  ,draw opacity=1 ][line width=1.5]    (200,150) -- (200,82.41) ;
\draw [shift={(200,79.41)}, rotate = 90] [color={rgb, 255:red, 0; green, 0; blue, 0 }  ,draw opacity=1 ][line width=1.5]    (14.21,-4.28) .. controls (9.04,-1.82) and (4.3,-0.39) .. (0,0) .. controls (4.3,0.39) and (9.04,1.82) .. (14.21,4.28)   ;
%Curve Lines [id:da08917513297322643] 
\draw    (27,40) .. controls (131,185.41) and (219,187.41) .. (327,40) ;
%Straight Lines [id:da5744019880386417] 
\draw [color={rgb, 255:red, 65; green, 117; blue, 5 }  ,draw opacity=1 ][line width=1.5]    (250,151) -- (250,83.41) ;
\draw [shift={(250,80.41)}, rotate = 90] [color={rgb, 255:red, 65; green, 117; blue, 5 }  ,draw opacity=1 ][line width=1.5]    (14.21,-4.28) .. controls (9.04,-1.82) and (4.3,-0.39) .. (0,0) .. controls (4.3,0.39) and (9.04,1.82) .. (14.21,4.28)   ;
%Straight Lines [id:da6203338075530149] 
\draw [color={rgb, 255:red, 208; green, 2; blue, 27 }  ,draw opacity=1 ][line width=1.5]    (270,168.81) -- (270,101.22) ;
\draw [shift={(270,98.22)}, rotate = 90] [color={rgb, 255:red, 208; green, 2; blue, 27 }  ,draw opacity=1 ][line width=1.5]    (14.21,-4.28) .. controls (9.04,-1.82) and (4.3,-0.39) .. (0,0) .. controls (4.3,0.39) and (9.04,1.82) .. (14.21,4.28)   ;
%Curve Lines [id:da02248311140384951] 
\draw [color={rgb, 255:red, 65; green, 117; blue, 5 }  ,draw opacity=1 ]   (209.33,81.74) .. controls (219.47,73.58) and (229.6,77.38) .. (239.73,84) ;
\draw [shift={(241.33,85.07)}, rotate = 214.24] [color={rgb, 255:red, 65; green, 117; blue, 5 }  ,draw opacity=1 ][line width=0.75]    (10.93,-3.29) .. controls (6.95,-1.4) and (3.31,-0.3) .. (0,0) .. controls (3.31,0.3) and (6.95,1.4) .. (10.93,3.29)   ;
%Curve Lines [id:da8234406754089179] 
\draw [color={rgb, 255:red, 208; green, 2; blue, 27 }  ,draw opacity=1 ]   (208,86.91) .. controls (216.82,106.67) and (230.44,112.67) .. (259.22,106.31) ;
\draw [shift={(261,105.91)}, rotate = 166.87] [color={rgb, 255:red, 208; green, 2; blue, 27 }  ,draw opacity=1 ][line width=0.75]    (10.93,-3.29) .. controls (6.95,-1.4) and (3.31,-0.3) .. (0,0) .. controls (3.31,0.3) and (6.95,1.4) .. (10.93,3.29)   ;
%Straight Lines [id:da9140724883710889] 
\draw [color={rgb, 255:red, 128; green, 128; blue, 128 }  ,draw opacity=1 ][line width=1.5]    (100,143.81) -- (100,74.22) ;
\draw [shift={(100,71.22)}, rotate = 90] [color={rgb, 255:red, 128; green, 128; blue, 128 }  ,draw opacity=1 ][line width=1.5]    (14.21,-4.28) .. controls (9.04,-1.82) and (4.3,-0.39) .. (0,0) .. controls (4.3,0.39) and (9.04,1.82) .. (14.21,4.28)   ;
%Straight Lines [id:da05350612537004129] 
\draw    (229.23,213.46) -- (123.5,85) ;
\draw [shift={(230.5,215)}, rotate = 230.54] [color={rgb, 255:red, 0; green, 0; blue, 0 }  ][line width=0.75]    (10.93,-3.29) .. controls (6.95,-1.4) and (3.31,-0.3) .. (0,0) .. controls (3.31,0.3) and (6.95,1.4) .. (10.93,3.29)   ;
%Shape: Ellipse [id:dp9377563866363519] 
\draw  [color={rgb, 255:red, 208; green, 2; blue, 27 }  ,draw opacity=1 ][dash pattern={on 4.5pt off 4.5pt}] (95.5,150) .. controls (95.5,135.89) and (142.29,124.45) .. (200,124.45) .. controls (257.71,124.45) and (304.5,135.89) .. (304.5,150) .. controls (304.5,164.11) and (257.71,175.55) .. (200,175.55) .. controls (142.29,175.55) and (95.5,164.11) .. (95.5,150) -- cycle ;
%Shape: Ellipse [id:dp2878115774546144] 
\draw  [color={rgb, 255:red, 65; green, 117; blue, 5 }  ,draw opacity=1 ][dash pattern={on 4.5pt off 4.5pt}] (149.88,150) .. controls (149.88,143.23) and (172.32,137.75) .. (200,137.75) .. controls (227.68,137.75) and (250.13,143.23) .. (250.13,150) .. controls (250.13,156.77) and (227.68,162.25) .. (200,162.25) .. controls (172.32,162.25) and (149.88,156.77) .. (149.88,150) -- cycle ;
%Straight Lines [id:da9480946452645596] 
\draw [color={rgb, 255:red, 74; green, 144; blue, 226 }  ,draw opacity=1 ][line width=0.75]    (286,185.91) -- (227,122.41) ;
%Straight Lines [id:da3444094313320383] 
\draw [color={rgb, 255:red, 74; green, 144; blue, 226 }  ,draw opacity=1 ][line width=0.75]    (286,185.91) -- (132,185.91) ;
%Straight Lines [id:da7084910967272811] 
\draw [color={rgb, 255:red, 74; green, 144; blue, 226 }  ,draw opacity=1 ][line width=0.75]    (227,122.41) -- (83,122.41) ;

% Text Node
\draw (287,174.81) node [anchor=north west][inner sep=0.75pt]    {$\textcolor[rgb]{0.29,0.56,0.89}{\mathbf{\Xi }}$};
% Text Node
\draw (139.88,155.4) node [anchor=north west][inner sep=0.75pt]    {$\textcolor[rgb]{0.25,0.46,0.02}{\varepsilon }$};
% Text Node
\draw (80,152.81) node [anchor=north west][inner sep=0.75pt]    {$\textcolor[rgb]{0.82,0.01,0.11}{\varepsilon '}$};
% Text Node
\draw (318,50.4) node [anchor=north west][inner sep=0.75pt]  [color={rgb, 255:red, 0; green, 0; blue, 0 }  ,opacity=1 ]  {$\ell $};
% Text Node
\draw (180,76.73) node [anchor=north west][inner sep=0.75pt]    {$\delta $};
% Text Node
\draw (254.33,73.07) node [anchor=north west][inner sep=0.75pt]    {$\mathbb{\textcolor[rgb]{0.25,0.46,0.02}{Q}}$};
% Text Node
\draw (275.67,96.07) node [anchor=north west][inner sep=0.75pt]    {$\mathbb{\textcolor[rgb]{0.82,0.01,0.11}{Q}}\textcolor[rgb]{0.82,0.01,0.11}{'}$};
% Text Node
\draw (55.67,35) node [anchor=north west][inner sep=0.75pt]   [align=left] {\begin{minipage}[lt]{65.65pt}\setlength\topsep{0pt}
\begin{center}
\textcolor[rgb]{0.5,0.5,0.5}{local optimum}\\\textcolor[rgb]{0.5,0.5,0.5}{with} $\displaystyle \textcolor[rgb]{0.82,0.01,0.11}{\varepsilon '}$
\end{center}

\end{minipage}};
% Text Node
\draw (336.33,156.4) node [anchor=north west][inner sep=0.75pt]    {$\xi _{1}$};
% Text Node
\draw (235,202.4) node [anchor=north west][inner sep=0.75pt]    {$\xi _{2}$};

\end{tikzpicture}
    \vspace{-16pt}
    \caption{Illustration of two worst-case distributions $\bb Q \in \mathbb{B}_\epsilon(\delta)$ and $\bb Q' \in \mathbb{B}_{\epsilon'}(\delta)$ in different Wasserstein balls around the Dirac delta distribution. The support $\bm\Xi$ is represented by the horizontal blue square, and the left-most Dirac distribution represents a local minima in $\mathbb{B}_{\epsilon'}(\delta)$ for the risk $\mc R_\epsilon(Q)$ in \eqref{eq:prop_dro_general_quad_risk_def}.}
    \label{fig:nonconvex_adversarial_play}
\end{figure}

% The activity of the constraint $\bb Q \in \mathbb{B}_\epsilon(\delta)$ is given by its Lagrange multiplier $\lambda$, which represents the shadow cost of robustification. The following proposition provides an $\bb R^d$ generalization of the example shown in Fig. \ref{fig:nonconvex_adversarial_play}.
Whether the constraint $\bb Q \in \mathbb{B}_\epsilon(\delta)$ is active or not is indicated by the value taken at the optimum by its Lagrange multiplier $\lambda$, which represents the shadow cost of robustification defined by
\begin{align}\label{eq:shadow_cost_def}
    \lambda^\star(Q) = \frac{d \mc R_\epsilon(Q)}{d\epsilon^+} = \lim_{\Delta\epsilon \rightarrow 0^+} \frac{\mc R_{\epsilon + \Delta\epsilon}(Q) - \mc R_\epsilon(Q)}{\Delta\epsilon} \,,
\end{align}
where existence of the limit from above follows by the monotonicity of $\mathcal{R}_\epsilon(Q)$ with respect to $\epsilon$. The following proposition provides a sufficient condition for the constraint to be active, generalizing the example shown in Fig. \ref{fig:nonconvex_adversarial_play} to $\bb R^d$.

\begin{proposition}\label{prop:lambda_is_big_enough_if_support_is}
Let $\partial \bm{\Xi} = \{\bm \xi : \max\limits_{k\in [n_H]} H_k \bm \xi -  h_k = 0 \}$, denote the boundary of the polytope $\bm{\Xi}$. If 
\begin{align}\label{eq:prop_lambda_is_big_enough_if_support_is}
    \frac{1}{N} \sum_{i \in [N]} ~ \min_{\tilde{\bm{\xi}} \in \partial \bm{\Xi}} ~ \norm{\bm{\xi}_T^{(i)}- \tilde{\bm{\xi}}}^2_2 > \epsilon\,,
\end{align}
that is, if the average squared distance between the samples and the boundary $\partial \bm\Xi$ of  $\bm\Xi$ is strictly greater than epsilon, then the optimal shadow cost of robustification $\lambda^\star(Q)$ in \eqref{eq:shadow_cost_def} is greater than $\lambda_{\max}(Q)$ for any positive semidefinite $Q \in \bb R^{d \times d}$.
%If the average squared distance between the samples and the border of the support $\bm\Xi$ is strictly greater than epsilon, then the optimal shadow cost of robustification $\lambda^\star$ is greater than $\lambda_{\max}(Q)$ for any $Q \in \bb R^{d \times d}$.
% If there exists a set $\mc I \subseteq [N]$ such that
% \begin{align}\label{eq:prop_lambda_is_big_enough_if_support_is}
%     \lt\{\bm \xi^\prime : \frac{N}{|\mc I|}\epsilon \geq \|\bm \xi^\prime - \bm\xi_T^{(i)}\|_2^2
%     \rt\} \subset \bm\Xi \,, \forall i \in \mc I,
% \end{align}
\end{proposition}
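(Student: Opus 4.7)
The plan is to prove the claim by contradiction: I show that if $\lambda^\star(Q) \le \lambda_{\max}(Q)$, then the Wasserstein budget $\epsilon$ is exceeded by the worst-case transport, contradicting \eqref{eq:prop_lambda_is_big_enough_if_support_is}. First, I invoke Lagrangian duality, which for quadratic losses and compact $\bm{\Xi}$ yields
\begin{equation*}
    \mc R_\epsilon(Q) \;=\; \inf_{\lambda \ge 0} F(\lambda), \quad F(\lambda) := \lambda\epsilon + \tfrac{1}{N}\sum_{i=1}^N \phi_i(\lambda),
\end{equation*}
with $\phi_i(\lambda) := \sup_{\bm{\xi} \in \bm{\Xi}}\{\bm{\xi}^\top Q \bm{\xi} - \lambda\|\bm{\xi}-\bm{\xi}_T^{(i)}\|_2^2\}$. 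By the envelope theorem applied to the perturbation $\epsilon \mapsto \mc R_\epsilon(Q)$, the minimizer of the convex function $F$ coincides with the shadow cost $\lambda^\star(Q)$ defined in \eqref{eq:shadow_cost_def}.

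Next, I argue that for every $\lambda \le \lambda_{\max}(Q)$ the inner objective $\bm{\xi} \mapsto \bm{\xi}^\top(Q-\lambda I)\bm{\xi} + 2\lambda (\bm{\xi}_T^{(i)})^\top \bm{\xi}$ cannot attain its maximum in the interior of $\bm{\Xi}$: the Hessian $Q-\lambda I$ has a non-negative eigenvalue, so no interior point satisfies the second-order necessary conditions for a maximum. Any maximizer $\bm{\xi}_i^\star(\lambda)$ therefore lies on $\partial\bm{\Xi}$. The same conclusion extends to a right-neighborhood of $\lambda_{\max}(Q)$ because the unconstrained maximizer $\lambda(\lambda I - Q)^{-1}\bm{\xi}_T^{(i)}$ diverges as $\lambda \downarrow \lambda_{\max}(Q)$ (the inverse blows up along the leading eigenspace of $Q$), so, by compactness of $\bm{\Xi}$, it lies outside $\bm{\Xi}$ for $\lambda$ sufficiently close to $\lambda_{\max}(Q)$, and the constrained maximizer remains on $\partial\bm{\Xi}$.

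Then, by Danskin's theorem, any element of $\partial F(\lambda)$ has the form $\epsilon - \tfrac{1}{N}\sum_i \|\bm{\xi}_i^\star(\lambda) - \bm{\xi}_T^{(i)}\|_2^2$. Since $\bm{\xi}_i^\star(\lambda) \in \partial\bm{\Xi}$ by the previous step, each term is bounded below by $\min_{\tilde{\bm{\xi}} \in \partial\bm{\Xi}} \|\bm{\xi}_T^{(i)} - \tilde{\bm{\xi}}\|_2^2$, so the hypothesis \eqref{eq:prop_lambda_is_big_enough_if_support_is} yields
\begin{equation*}
    \partial F(\lambda) \;\subseteq\; \Big(-\infty,\; \epsilon - \tfrac{1}{N}\textstyle\sum_i \min_{\tilde{\bm{\xi}} \in \partial\bm{\Xi}} \|\bm{\xi}_T^{(i)} - \tilde{\bm{\xi}}\|_2^2\Big) \;\subset\; (-\infty, 0)\,,
\end{equation*}
uniformly on an interval containing $\lambda_{\max}(Q)$. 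Hence $F$ is strictly decreasing there, and its minimizer $\lambda^\star(Q)$ lies strictly to the right of $\lambda_{\max}(Q)$.

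The main obstacle is the non-smooth behavior of $\phi_i$ precisely at $\lambda = \lambda_{\max}(Q)$, where $Q-\lambda I \preceq 0$ becomes degenerate along the top eigenspace of $Q$ and the maximizer set becomes set-valued, so a careful application of Danskin's theorem is required to cover the whole convex subdifferential, not just one selection. The divergence of the unconstrained maximizer as $\lambda \downarrow \lambda_{\max}(Q)$ is the lever that rules out interior optima exactly at this threshold, and the full-dimensionality of $\bm{\Xi}$ (Assumption \ref{ass:support_is_polyhedron}) ensures that the right-derivative argument goes through generically; the strict separation between the right- and left-derivatives of $F$ is what upgrades the conclusion from $\lambda^\star(Q) \ge \lambda_{\max}(Q)$ to the claimed strict inequality.
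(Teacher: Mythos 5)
Your route is genuinely different from the paper's. The paper works on the primal side: it takes an empirical worst-case distribution and explicitly perturbs it by transporting a small mass along the leading eigenspace of $Q$, thereby lower-bounding the right derivative $d\mc R_\epsilon(Q)/d\epsilon^+$ directly. You instead pass to the strong dual $\mc R_\epsilon(Q)=\inf_{\lambda\ge 0}F(\lambda)$ and locate the optimal multiplier by showing $\partial F(\lambda)\subset(-\infty,0)$ below $\lambda_{\max}(Q)$ via Danskin's theorem. For $\lambda<\lambda_{\max}(Q)$ your argument is sound: the inner objective is strictly convex along the top eigendirection of $Q$, so every maximizer lies on $\partial\bm{\Xi}$, every subgradient of $\phi_i$ is at most $-\min_{\tilde{\bm{\xi}}\in\partial\bm{\Xi}}\|\bm{\xi}_T^{(i)}-\tilde{\bm{\xi}}\|_2^2$, and \eqref{eq:prop_lambda_is_big_enough_if_support_is} makes $F$ strictly decreasing on $[0,\lambda_{\max}(Q))$, hence $\lambda^\star(Q)\ge\lambda_{\max}(Q)$. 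That non-strict inequality is exactly what the paper's own proof establishes (its final display ends with ``$\geq\lambda_{\max}(Q)$'') and is all that Lemma~\ref{coro:quad_dro_prob_is_limit_linear} and Theorem~\ref{thm:drinc} use. The dual argument is arguably more transparent than the paper's mass-perturbation construction, which has to introduce the auxiliary set $\bm{\Xi}_\delta$ to keep the perturbation inside the support.

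The gap is in your upgrade to the strict inequality. At $\lambda=\lambda_{\max}(Q)$ the Hessian $Q-\lambda I$ is negative semidefinite, so the second-order necessary condition for an interior maximum is \emph{not} violated (a zero eigenvalue is not a positive one), and your right-neighborhood argument hinges on the unconstrained maximizer $\lambda(\lambda I-Q)^{-1}\bm{\xi}_T^{(i)}$ diverging as $\lambda\downarrow\lambda_{\max}(Q)$ --- which happens only when $\bm{\xi}_T^{(i)}$ has a nonzero component in the leading eigenspace of $Q$. If every sample is orthogonal to that eigenspace, the unconstrained maximizer stays bounded (it equals $\bm{\xi}_T^{(i)}$ itself), can lie in the interior of $\bm{\Xi}$, the corresponding subgradient of $\phi_i$ vanishes, and $F$ need not keep decreasing past $\lambda_{\max}(Q)$. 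This is not a repairable technicality, because the strict claim itself fails there: take $Q=\operatorname{diag}(1,0)$, $\bm{\Xi}=[-10,10]^2$, a single sample at the origin, and $\epsilon=1$; condition \eqref{eq:prop_lambda_is_big_enough_if_support_is} holds, yet $\mc R_\epsilon(Q)=\epsilon$ for this range of $\epsilon$, so $\lambda^\star(Q)=1=\lambda_{\max}(Q)$. You should therefore stop at $\lambda^\star(Q)\ge\lambda_{\max}(Q)$ --- matching what the paper actually proves and needs --- rather than claim strictness.
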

\begin{proof}
    The proof is given in Appendix \ref{app:lambda_is_big_enough_if_support_is}.
\end{proof}
Proposition \ref{prop:lambda_is_big_enough_if_support_is} shows that $\lambda$ is contingent on the radius $\epsilon$, the support $\bm \Xi$, and the realizations $\bm \xi^{(i)}$. The radius $\epsilon$ is usually small, as the samples should be approximating the real distribution well enough, which means that the condition \eqref{eq:prop_lambda_is_big_enough_if_support_is} is often satisfied. In the next section, we utilize the inequality $\lambda^\star \geq \lambda_{\max}(Q)$ to propose a strong dual formulation for \eqref{eq:prop_dro_general_quad_risk_def}.
%Proposition \ref{prop:lambda_is_big_enough_if_support_is} shows that $\lambda$ is contingent on the radius $\epsilon$, the support $\bm \Xi$, and the realizations $\bm \xi^{(i)}$. The radius $\epsilon$ is usually small, as the samples should be approximating the real distribution well enough, which means that the condition \eqref{eq:prop_lambda_is_big_enough_if_support_is} is often satisfied. In the next section, we utilize the inequality $\lambda^\star \geq \lambda_{\max}(Q)$ to propose a strong dual formulation for \eqref{eq:prop_dro_general_quad_risk_def}.

% \textcolor{myr}{In what follows, we exploit the partial convexity of the problem \eqref{eq:prop_dro_general_quad_risk_def} to define a convex upper bound, which is tight in many useful cases.}
% \textcolor{myr}{The optimal closed loop map \eqref{eq_risk_def} is the $\argmin$ of $\mc R_\epsilon(\bm \Phi^\top D \bm \Phi)$ subject to safety constraints. Replacing $\mc R_\epsilon$ by a convex upper bound can therefore yield suboptimal control policies but does not affect safety or stability. In this section, we provide a strong dual formulation for the control design problem and show that it is tight under mild assumptions.}

\subsection{Tight convex relaxation for DRO of quadratic objectives}\label{subsec_results_dro_theory}
In this section, we present a convex upper bound for \eqref{eq:prop_dro_general_quad_risk_def}, and prove that it becomes tight if $\lambda$ is greater than $\lambda_{\max}(Q)$, the largest eigenvalue of $Q$.

\begin{lemma}\label{coro:quad_dro_prob_is_limit_linear}
Let $Q \in \bb R^{d\times d}$ be a positive definite matrix. Under Assumption \ref{ass:support_is_polyhedron}, if $\epsilon > 0$ and if $\bm \Xi$ is bounded, the risk \eqref{eq:prop_dro_general_quad_risk_def} satisfies
%TODO: add projection
\begin{subequations}\label{eq:coro_dro_general_quad_risk_dual}
\begin{align}
    \label{eq:coro_dro_general_quad_risk_dual_cost}
    \!\!\mc R_\epsilon(Q) \leq &\inf_{\substack{
    s^{(i)},\, \lambda \geq 0, \\ \psi_i \geq \mu_i, \\ \mu_i \geq 0,\, \alpha \geq 0}}  
    \lambda \epsilon + \frac{1}{N} \!\sum_{i \in [N]} s^{(i)}\,,\; \\ \nonumber
    &\st\,, \forall i \in [N]:
    \\ \label{eq:coro_dro_general_quad_risk_dual_inequality}
    &\lt[\matb
    s^{(i)} \!\!-\!  h^{\!\top}\! \psi_i \!+\! \lambda\|\bm\xi_T^{(i)} \! \|_2^2 \!\!\!\!\!\!
    &
    \star
    %\big( 2 \lambda \bm\xi_T^{(i)} \!\!+\!  H^{\!\top} \!\psi_i  \big)^{\!\!\top}
    &
    \!\!\star
    %\!\! \mu_i^{\!\top} H
    \\
    2 \lambda \bm\xi_T^{(i)} \!\!-\!  H^{\!\top} \!\psi_i
    &
    4(\lambda I \!-\! Q)
    &
    \!\!\star % 0
    \\
    H^{\!\top} \mu_i
    &
    0
    &
    \!\!4Q
    \mate\rt] \!\! \succeq \! 0, \!\!
    \\ \label{eq:coro_dro_general_quad_risk_dual_equality}
    &
    \lt[\matb
    \alpha & \star
    \\
    H^{\!\top}\!\mu_i & \lambda I - Q
    \mate\rt] \succeq 0.
    % \\ \label{eq:coro_dro_general_quad_risk_dual_inequality}
    % & s^{(i)} \geq  h^{\!\top}\! \psi_i + \frac{1}{4} \mu_i^{\!\top} H Q^{-1} H^{\!\top} \!\!\mu_i \!-\! \lambda\|\bm\xi_T^{(i)}\|_2^2
    % \\ \nonumber
    % &\quad\quad\quad
    % +\! \frac{1}{4} \big( 2 \lambda \bm\xi_T^{(i)} \!\!+\!  H^{\!\top} \!\psi_i  \big)^{\!\!\top}\!\! (\lambda I \!-\! Q)^{\!-1} \!\big( 2 \lambda \bm\xi_T^{(i)} \!\!+\!  H^{\!\top} \!\psi_i  \big).
\end{align}
\end{subequations}
Moreover, \eqref{eq:coro_dro_general_quad_risk_dual_cost} holds with equality if the optimum $\lambda^\star$ of $\lambda$ satisfies $\lambda^\star \geq \lambda_{\max}(Q)$.
\end{lemma}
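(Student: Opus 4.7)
The plan is to bound $\mathcal{R}_\epsilon(Q)$ from above by first dualizing the Wasserstein DRO, then relaxing the inner quadratic maximization via an S-procedure argument, and finally arguing tightness in the regime where the inner problem becomes concave, i.e., when $\lambda^\star \geq \lambda_{\max}(Q)$.

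First I would invoke the Wasserstein-$2$ strong duality result underlying \cite[Proposition~2.12]{shafieezadeh2023new}, which applies here because $\bm{\Xi}$ is compact and $\bm{\xi}^\top Q\bm{\xi}$ is continuous. This produces
\begin{align*}
\mathcal{R}_\epsilon(Q) = \inf_{\lambda \geq 0} \lambda \epsilon + \frac{1}{N}\sum_{i=1}^N \sup_{\bm{\xi} \in \bm{\Xi}} \bigl\{\bm{\xi}^\top Q \bm{\xi} - \lambda \|\bm{\xi} - \bm{\xi}_T^{(i)}\|_2^2\bigr\}.
\end{align*}
Introducing per-sample epigraph variables $s^{(i)}$ that upper-bound each inner supremum reformulates the problem as an infimum of $\lambda\epsilon + \frac{1}{N}\sum_i s^{(i)}$ subject to the quadratic inequalities $s^{(i)} \geq \bm{\xi}^\top Q\bm{\xi} - \lambda\|\bm{\xi} - \bm{\xi}_T^{(i)}\|_2^2$ on $\bm{\Xi}$.

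Next, for each sample $i$ I would attach a non-negative Lagrange vector $\psi_i$ to the polyhedral constraint $H\bm{\xi} \leq h$ and apply the S-procedure: the above inequality is implied by positive semidefiniteness of the matrix obtained by rewriting $s^{(i)} - \bm{\xi}^\top Q\bm{\xi} + \lambda\|\bm{\xi} - \bm{\xi}_T^{(i)}\|_2^2 + \psi_i^\top(h - H\bm{\xi}) \geq 0$ as a quadratic form in $(1, \bm{\xi})$. After scaling the second block row and column by $2$, this exactly reproduces the leading $2 \times 2$ principal submatrix of \eqref{eq:coro_dro_general_quad_risk_dual_inequality}; the additional row and column involving $\mu_i$ and the block $4Q$, together with constraint \eqref{eq:coro_dro_general_quad_risk_dual_equality}, arise as a refinement of the basic S-procedure whose role is verified via Schur-complement elimination of the $4Q$ block. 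Since the SDP provides only a \emph{sufficient} reformulation of the infinite family of quadratic inequalities, every feasible tuple certifies an upper bound on $s^{(i)}(\lambda)$, so the SDP optimum is no smaller than $\mathcal{R}_\epsilon(Q)$, giving \eqref{eq:coro_dro_general_quad_risk_dual_cost}.

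For the tightness claim, when $\lambda^\star \geq \lambda_{\max}(Q)$ the matrix $\lambda^\star I - Q \succeq 0$, so the inner problem $\sup_{\bm{\xi}\in\bm{\Xi}} \{\bm{\xi}^\top Q\bm{\xi} - \lambda^\star\|\bm{\xi} - \bm{\xi}_T^{(i)}\|_2^2\}$ becomes a concave quadratic program over a polyhedron. Strong Lagrangian duality then applies (no constraint qualification is needed because all constraints are affine), so the S-procedure with multipliers $\psi_i$ exactly recovers the primal value, making the SDP relaxation tight at $\lambda^\star$. I expect the main technical hurdle will be reconciling the precise algebra of \eqref{eq:coro_dro_general_quad_risk_dual_inequality}--\eqref{eq:coro_dro_general_quad_risk_dual_equality} with the basic S-procedure: explaining exactly why the auxiliary variables $\mu_i$ and $\alpha$ enter the formulation and checking that they can be set to zero without loss of optimality in the tight regime. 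This amounts to careful block-matrix Schur-complement bookkeeping, routine but delicate.
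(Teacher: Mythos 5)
Your proposal is correct in substance, but it reaches \eqref{eq:coro_dro_general_quad_risk_dual} by a genuinely different route than the paper. The paper does \emph{not} invoke a general Wasserstein strong-duality theorem for the non-concave quadratic loss; instead it writes $\bm\xi^\top Q\bm\xi$ as the envelope of its tangent planes on an $\epsilon$-net, applies Proposition~\ref{prop:cvar_original_deterministic_constraint_bounded_support} to each finite piecewise-linear approximation, passes to the limit via a discretizability result for semi-infinite programs, swaps $\max$ and $\min$ (inequality in general, equality by Sion's theorem when $\lambda I - Q \succeq 0$), and then explicitly dualizes the inner QP over $\bar{\bm\xi}\in\bm\Xi$ with multipliers $\mu_i$, carrying out lengthy Woodbury-type algebra with the substitution $\psi_i = \kappa_i + \mu_i$ — this derivation history is precisely why $\mu_i$, $\alpha$, and the third block row of \eqref{eq:coro_dro_general_quad_risk_dual_inequality} appear. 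Your route — general duality followed by a one-shot S-procedure — is shorter and conceptually cleaner; its cost is reliance on a stronger black box (strong duality for Wasserstein DRO with a non-piecewise-concave loss), which is indeed available for continuous losses on compact supports (Gao--Kleywegt/Blanchet--Murthy-type results, or the abstract duality preceding the piecewise-concave specialization in \cite{shafieezadeh2023new}), and note that for the inequality \eqref{eq:coro_dro_general_quad_risk_dual_cost} alone you only need weak duality, which is elementary. Your deferred bookkeeping does check out, and in fact more strongly than you claim: taking the Schur complement of the $4Q$ block shows that \eqref{eq:coro_dro_general_quad_risk_dual_inequality} is the $2\times2$ S-procedure LMI with its $(1,1)$ entry decreased by $\tfrac{1}{4}\mu_i^\top H Q^{-1} H^\top \mu_i \ge 0$, so any $\mu_i > 0$ only shrinks the feasible set, while \eqref{eq:coro_dro_general_quad_risk_dual_equality} with $\mu_i = 0$ reduces to $\lambda I - Q \succeq 0$, which the $(2,2)$ block of \eqref{eq:coro_dro_general_quad_risk_dual_inequality} already enforces; hence $\mu_i = 0$, $\alpha = 0$ is without loss of optimality in \emph{all} regimes, not only the tight one, and the auxiliary variables are artifacts of the paper's derivation path rather than a genuine refinement.

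Two small corrections. First, your S-procedure certificate has a sign error: the valid sufficient condition is $s^{(i)} - \bm\xi^\top Q\bm\xi + \lambda\|\bm\xi-\bm\xi_T^{(i)}\|_2^2 - \psi_i^\top(h - H\bm\xi) \ge 0$ for all $\bm\xi$ (with a minus sign, consistent with the $-h^\top\psi_i$ in the $(1,1)$ entry of \eqref{eq:coro_dro_general_quad_risk_dual_inequality}); with your $+$ sign the implication to feasibility on $\bm\Xi$ fails. Second, in the tightness argument be explicit that $\lambda^\star$ in the lemma refers to the shadow cost \eqref{eq:shadow_cost_def} of the unrestricted dual $\inf_{\lambda\ge 0}$, since every SDP-feasible $\lambda$ trivially satisfies $\lambda \ge \lambda_{\max}(Q)$; the condition $\lambda^\star \ge \lambda_{\max}(Q)$ is exactly what guarantees that restricting the dual to that half-line, where the concave-QP Lagrangian relaxation is exact, loses nothing.
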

\vspace{5pt}
\begin{proof}
    This result is obtained by taking the limit of \eqref{eq:prop_dro_general_risk_dual} when the number $J$ of pieces tends to infinity. The detailed derivations are presented in Appendix \ref{app:proof_lem_bounded_support_quad}.
\end{proof}
% \begin{proof}
%     The proof uses the dual problem of \eqref{eq:coro_dro_general_quad_risk_dual_inequality}. The exact derivations can be found in Appendix \ref{app:proof_coro_bounded_support_quad}
% \end{proof}

%Note that $P_\lambda$ in \eqref{eq:coro_dro_general_quad_risk_dual_equality} can either be zero or the projection on the eigenvector(s) of $Q$ associated to $\lambda_{\max}(Q)$. In both cases, \eqref{eq:coro_dro_general_quad_risk_dual} is a convex SDP.
We stress that our results continue to hold even if $H = 0$ and $h = 0$, that is, if $\bm \Xi = \bb R^d$. In this case, \eqref{eq:coro_dro_general_quad_risk_dual} simplifies substantially.

\begin{corollary}\label{cor_drinc_cost}
Lemma \ref{coro:quad_dro_prob_is_limit_linear} also holds if $\bm \Xi = \bb R^d$ and \eqref{eq:coro_dro_general_quad_risk_dual} simplifies into
\begin{subequations}
\label{eq:kuhn_theorem_11}
    \begin{align}\label{eq:kuhn_theorem_11_cost}
    \mc R_\epsilon(Q) =\; &\inf_{\substack{s^{(i)}, \lambda \geq 0}}
     \lambda \epsilon + \frac{1}{N} \!\sum_{i \in [N]} s^{(i)}\,,
    \\ \label{eq:kuhn_theorem_11_inequality}
    & \st \lt[\matb
    s^{(i)} \!+\! \lambda\|\bm\xi_T^{(i)} \! \|_2^2 \!\!
    &
    \star
    %\lambda \big( \bm\xi_T^{(i)} \big)^{\!\!\top}
    \\
    \lambda \bm\xi_T^{(i)}
    &
    \lambda I - Q
    \mate\rt] \!\! \succeq \! 0. \!\!\!
    \end{align}
\end{subequations}
\end{corollary}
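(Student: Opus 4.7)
My plan is to specialize Lemma \ref{coro:quad_dro_prob_is_limit_linear} to the case $\bm{\Xi} = \mathbb{R}^d$ by formally taking $H = 0$ and $h = 0$ in the polytopic support description $\{H\bm{\xi} \leq h\}$. Every term in \eqref{eq:coro_dro_general_quad_risk_dual} involving $H$ or $h$ then vanishes, so the Lagrange multipliers $\psi_i$, $\mu_i$, and $\alpha$, which were introduced to enforce $\bm{\xi} \in \bm{\Xi}$, drop out of the objective and can be eliminated in favor of the simplified dual.

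The crucial observation is that the semidefinite constraint \eqref{eq:coro_dro_general_quad_risk_dual_equality} reduces to $\operatorname{diag}(\alpha,\, \lambda I - Q) \succeq 0$, which is feasible precisely when $\lambda I - Q \succeq 0$, equivalently $\lambda \geq \lambda_{\max}(Q)$. Hence the reduced dual automatically enforces $\lambda^\star \geq \lambda_{\max}(Q)$, which in turn invokes the tightness clause of Lemma \ref{coro:quad_dro_prob_is_limit_linear} and upgrades the inequality \eqref{eq:coro_dro_general_quad_risk_dual_cost} to the equality claimed in \eqref{eq:kuhn_theorem_11_cost}. This is the key step that turns the upper bound of the lemma into an exact characterization.

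Next, I would simplify the $3\times 3$ block inequality \eqref{eq:coro_dro_general_quad_risk_dual_inequality}. With $H = 0$ and $h = 0$, its third block-row/column collapses to $[0,\, 0,\, 4Q]$, so the block $4Q \succeq 0$ decouples and is trivially satisfied. What remains is the $2\times 2$ block constraint
\begin{equation*}
    \begin{bmatrix}
        s^{(i)} + \lambda \|\bm{\xi}_T^{(i)}\|_2^2 & 2\lambda (\bm{\xi}_T^{(i)})^\top \\
        2\lambda \bm{\xi}_T^{(i)} & 4(\lambda I - Q)
    \end{bmatrix} \succeq 0.
\end{equation*}
A congruence transformation by $\operatorname{diag}(1,\, \tfrac{1}{2} I)$, which preserves positive semidefiniteness, then yields exactly \eqref{eq:kuhn_theorem_11_inequality}.

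The only subtlety is verifying that the equality in \eqref{eq:kuhn_theorem_11_cost} is genuine and not merely the upper-bound specialization of the lemma; this is settled by the feasibility argument above, which automatically enforces the hypothesis $\lambda^\star \geq \lambda_{\max}(Q)$ required by Lemma \ref{coro:quad_dro_prob_is_limit_linear}. As an independent consistency check, the resulting formula coincides with the one derived in \cite[Theorem~11]{kuhn2019wasserstein} for Wasserstein DRO of a quadratic objective over unbounded support, which can be cited in lieu of re-deriving the simplification from scratch if desired.
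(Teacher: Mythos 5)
Your mechanical simplification of the LMIs is correct and matches the paper's observation: with $H=0$, $h=0$ the multipliers $\psi_i,\mu_i,\alpha$ decouple, the third block-row of \eqref{eq:coro_dro_general_quad_risk_dual_inequality} reduces to the vacuous condition $4Q\succeq 0$, \eqref{eq:coro_dro_general_quad_risk_dual_equality} becomes redundant, and the congruence by $\operatorname{diag}(1,\tfrac12 I)$ recovers \eqref{eq:kuhn_theorem_11_inequality}. However, your justification of the \emph{equality} in \eqref{eq:kuhn_theorem_11_cost} has a genuine gap. You argue that feasibility of the reduced SDP forces $\lambda\succeq\lambda_{\max}(Q)$ and that this ``automatically enforces $\lambda^\star\geq\lambda_{\max}(Q)$, which in turn invokes the tightness clause of Lemma \ref{coro:quad_dro_prob_is_limit_linear}.'' This confuses the decision variable of the relaxed SDP with the quantity appearing in the tightness hypothesis. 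The constraint $\lambda I - Q\succeq 0$ is \emph{already} present in the diagonal block of \eqref{eq:coro_dro_general_quad_risk_dual_inequality} for bounded support, so by your reasoning the lemma would always be tight and its caveat would be vacuous. The tightness condition refers to the true shadow cost of robustification \eqref{eq:shadow_cost_def} (equivalently, the optimal multiplier of the \emph{unrelaxed} dual), as Proposition \ref{prop:lambda_is_big_enough_if_support_is} and the remark following Theorem 1 make clear; forcing the SDP variable to satisfy $\lambda\geq\lambda_{\max}(Q)$ is precisely the \emph{source} of potential conservatism, not a certificate of exactness.

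A second, related problem is that Lemma \ref{coro:quad_dro_prob_is_limit_linear} is stated and proved only for \emph{bounded} $\bm\Xi$: its proof rests on a finite $\epsilon$-net with radius $r(\bm\Xi)<\infty$, on attainment of the inner supremum, and on discretizability of the resulting semi-infinite program, all of which fail when $\{\bm\xi: H\bm\xi\leq h\}=\mathbb{R}^d$. So the corollary cannot be obtained by formally substituting $H=0$, $h=0$ into the lemma; it is an extension requiring a separate argument. The correct route — and the one the paper takes — is to invoke the strong-duality (``hidden convexity'') result of \cite[Theorem~11]{kuhn2019wasserstein} for quadratic DRO over $\mathbb{R}^d$ to obtain the equality directly, and then merely check that the resulting LMI agrees with the Schur complement of \eqref{eq:coro_dro_general_quad_risk_dual_inequality} when $H=0$, $h=0$. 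In other words, the citation you relegate to an ``independent consistency check'' is the load-bearing step; the argument you place in its stead does not establish the equality.
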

\begin{proof}
    If $\bm\Xi = \bb R^d$, the problem \eqref{eq_risk_def} falls into the assumptions of \cite[Theorem 11]{kuhn2019wasserstein}. Additionally, we observe that, when $H = 0$ and $h = 0$, \eqref{eq:kuhn_theorem_11_inequality} has the same Schur complement as \eqref{eq:coro_dro_general_quad_risk_dual_inequality} and \eqref{eq:coro_dro_general_quad_risk_dual_equality} is always satisfied.
\end{proof}

To understand the effect of having restricted our attention to distributions with bounded support, it is of interest to compare \eqref{eq:coro_dro_general_quad_risk_dual}  with \eqref{eq:kuhn_theorem_11}. In both problems, the presence of the term $\lambda I - Q$ in \eqref{eq:coro_dro_general_quad_risk_dual_inequality} and \eqref{eq:kuhn_theorem_11_inequality} implies that any feasible solution has a shadow cost $\lambda$ greater or equal than $\lambda_{\max}(Q)$. On the other hand, for \eqref{eq:kuhn_theorem_11_inequality} to be feasible, $\lambda$ should be large enough to guarantee $s^{(i)} + \lambda\|\bm\xi_T^{(i)}\|_2^2 \geq 0$, whereas the presence of the additional term $-h^\top \psi_i$ in the top-left entry of \eqref{eq:coro_dro_general_quad_risk_dual_inequality} softens this requirement, demonstrating the helpful contribution of the bounded support.

% In order to develop an intuition about the problem \eqref{eq:coro_dro_general_quad_risk_dual} beyond the simple description of $\lambda$ and $s^{(i)}$ presented in Section \ref{subsec_present_prop_212}, one can compare \eqref{eq:coro_dro_general_quad_risk_dual_inequality} and \eqref{eq:kuhn_theorem_11_inequality}. In \eqref{eq:kuhn_theorem_11_inequality}, the shadow cost $\lambda$ is lower-bounded by the maximum eigenvalue $\lambda_{\max}(Q)$ of $Q$%, which is itself limited by the convergence speed of the closed loop $\bm \Phi$ due to \eqref{eq:lem_quad_cost_lmi}
% . The same limit appears in \eqref{eq:coro_dro_general_quad_risk_dual_inequality}. However, we observe that the gap between $\lambda$ and $\lambda_{\max}(Q)$ can be mitigated by $\psi_i$, demonstrating the helpful contribution of the bounded support.

\subsection{Convex formulation of DRInC design}\label{subsec_results_drinc}

Our results of Section \ref{subsec_results_dro_theory} do not directly allow us to solve \eqref{eq_risk_def}, as \eqref{eq:quad_expectation_risk_def} shows that the control cost depends quadratically on $\bm \Phi$ and $\bm \Phi^\top D \bm \Phi$ may be rank deficient. In this subsection, we mitigate the issues associated with quadratic matrix inequalities by employing a Schur complement. We address rank-deficiency concerns by examining the risk $\mc R_\epsilon (\bm \Phi^\top D \bm \Phi + |\eta| I)$ as its argument approaches singularity, showing that the limit $\eta \rightarrow 0$ remains well-behaved.

\begin{lemma}\label{lem:drinc_cost}
Under Assumption \ref{ass:support_is_polyhedron}, if $\epsilon > 0$ and $\bm\Xi$ is bounded, the optimal closed loop map $\bm \Phi^\star$ in \eqref{eq_risk_def} is given by
\begin{subequations}
\label{eq:lem_quad_cost}
\begin{align}
    \label{eq:lem_quad_cost_cost}
    \bm\Phi^\star = \underset{\bm{\Phi}}{\arg} &\min_{\substack{\bm\Phi \mt{ achievable}, Q}}%, \\ \lambda \geq 0, \mu_{i} \geq 0, \\ \psi_i \geq -\mu_i}}
    \lim_{\eta \rightarrow 0}
    \mc R_\epsilon(Q + |\eta| I) \,,
    %\lambda \epsilon + \!\frac{1}{N} \!\!\sum_{i \in [N]} \!\! s^{(i)},
    \\ \label{eq:lem_quad_cost_lmi}
    & \st \,% \nonumber \\
    \!\lt[\matb
    Q & \star
    %\Phi^{\!\top} D^{\frac{1}{2}}
    \\
    D^{\frac{1}{2}} \Phi & I
    \mate\rt]
    \succeq 0 \,.
    %
    % \lt[\matb
    % Q & Q
    % \\
    % Q & \lambda I
    % \mate\rt]
    % \succeq 0 \,,
    % \\ \label{eq:lem_quad_cost_inequality}
    % &\!\lt[\matb
    % s^{(i)} \!\!-\!  h^{\!\top}\! \psi_i \!+\! \lambda\|\bm\xi_T^{(i)} \! \|_2^2 \!\!
    % &
    % \star
    % %\big( 2 \lambda \bm\xi_T^{(i)} \!\!+\!  H^{\!\top} \!\psi_i  \big)^{\!\!\top}
    % &
    % \star
    % %\!\! \mu_i^{\!\top} H
    % \\
    % 2 \lambda \bm\xi_T^{(i)} \!\!+\!  H^{\!\top} \!\psi_i
    % &
    % 4(\lambda I - Q)
    % &
    % \star % 0
    % \\
    % H^{\!\top} \mu_i
    % &
    % 0
    % &
    % 4Q
    % \mate\rt] \!\! \succeq \! 0, \!\!\!
\end{align}
\end{subequations}
\end{lemma}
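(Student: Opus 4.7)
The plan is to introduce an epigraph-style reformulation that decouples the closed-loop map $\bm{\Phi}$ from the quadratic loss inside $\mc R_\epsilon$, apply a Schur complement to obtain the LMI \eqref{eq:lem_quad_cost_lmi}, and then use a vanishing perturbation $|\eta| I$ to circumvent the positive-definiteness requirement of Lemma~\ref{coro:quad_dro_prob_is_limit_linear}.

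First, I would observe that the DR risk $\mc R_\epsilon$ is monotone in its matrix argument: if $Q_1 \succeq Q_2$, then $\bm\xi^\top Q_1 \bm\xi \geq \bm\xi^\top Q_2 \bm\xi$ for every $\bm\xi$, so $\mathbb{E}_{\mathbb{Q}}[\bm\xi^\top Q_1 \bm\xi] \geq \mathbb{E}_{\mathbb{Q}}[\bm\xi^\top Q_2 \bm\xi]$ for every admissible $\mathbb{Q}$, and hence $\mc R_\epsilon(Q_1) \geq \mc R_\epsilon(Q_2)$. This monotonicity implies that enlarging \eqref{eq_risk_def} with an auxiliary variable $Q$ subject to $Q \succeq \bm\Phi^\top D \bm\Phi$ does not change the optimum, as the infimum over $Q$ is always attained at the tight choice $Q = \bm\Phi^\top D \bm\Phi$. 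Writing $\bm\Phi^\top D \bm\Phi = (D^{1/2}\bm\Phi)^\top(D^{1/2}\bm\Phi)$ and invoking the Schur complement with the identity block in the lower-right corner then recasts the matrix inequality $Q \succeq \bm\Phi^\top D \bm\Phi$ as the LMI \eqref{eq:lem_quad_cost_lmi}.

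Next, since Lemma~\ref{coro:quad_dro_prob_is_limit_linear} requires the argument of $\mc R_\epsilon$ to be positive definite, whereas $Q$ may be rank-deficient at the optimum (e.g., when $\bm\Phi$ has fewer rows than $\bm\xi_T$ has components), I would substitute $Q + |\eta| I$ for $Q$ with $\eta > 0$, evaluate the tractable risk via Lemma~\ref{coro:quad_dro_prob_is_limit_linear}, and finally let $\eta \to 0$. Convergence follows from the sandwich $\mc R_\epsilon(Q) \leq \mc R_\epsilon(Q + |\eta| I) \leq \mc R_\epsilon(Q) + |\eta|\, \mc R_\epsilon(I)$, where the lower bound uses monotonicity and the upper bound uses sub-additivity of the supremum together with the identity $\bm\xi^\top(Q + |\eta| I)\bm\xi = \bm\xi^\top Q \bm\xi + |\eta|\|\bm\xi\|_2^2$. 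Since $\bm\Xi$ is compact, $\mc R_\epsilon(I) = \sup_{\mathbb{Q} \in \mathbb{B}_\epsilon(\widehat{\mathbb{P}})} \mathbb{E}_{\mathbb{Q}}[\|\bm\xi\|_2^2] < \infty$, so $\lim_{\eta\to 0^+} \mc R_\epsilon(Q + |\eta| I) = \mc R_\epsilon(Q)$ pointwise in $(\bm\Phi, Q)$. Combined with the reformulation above, this yields the stated characterization of $\bm\Phi^\star$.

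The main obstacle I anticipate is rigorously justifying that the pointwise limit appearing inside the objective \eqref{eq:lem_quad_cost_cost} is compatible with the outer minimization, i.e., that the argmin of $\lim_{\eta\to 0}\mc R_\epsilon(Q + |\eta| I)$ coincides with the argmin of $\mc R_\epsilon(\bm\Phi^\top D\bm\Phi)$. Because the bound $\mc R_\epsilon(Q + |\eta| I) - \mc R_\epsilon(Q) \leq |\eta|\, \mc R_\epsilon(I)$ is uniform in $(\bm\Phi, Q)$ over any bounded subset of the feasible region, and because $\mc R_\epsilon(Q)$ is continuous and convex in $Q$, this should be resolvable via a standard epi-convergence argument invoking the strong convexity property already noted below \eqref{eq_risk_def} to ensure attainment of the minimizer.
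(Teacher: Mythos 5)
Your proposal is correct and follows essentially the same route as the paper's proof: monotonicity of $\mc R_\epsilon$ in its matrix argument to justify the epigraph variable $Q \succeq \bm\Phi^\top D \bm\Phi$ (attained with equality), a Schur complement to obtain \eqref{eq:lem_quad_cost_lmi}, and a squeeze argument using boundedness of $\bm\Xi$ to show $\mc R_\epsilon(Q + |\eta| I) \to \mc R_\epsilon(Q)$ as $\eta \to 0$. Your bound $|\eta|\,\mc R_\epsilon(I)$ plays the same role as the paper's $|\eta| \max_{\bm\xi_T \in \bm\Xi}\|\bm\xi_T\|_2^2$, and your extra remark on the compatibility of the limit with the outer minimization is a point the paper glosses over but which your uniform bound handles.
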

\begin{proof}
    The proof can be found in Appendix \ref{app:proof_lem_drinc_cost}
\end{proof}

We continue our derivations by presenting an equivalent convex reformulation of the safety constraints in \eqref{eq:conditional_value_at_risk_safety_constraints_inf_horizon_both}. In particular, in the next proposition, we embed the function $\max\{\cdot - \tau, 0\}$ in \eqref{eq:cvar_constraint_definition} as a $(J+1)^\text{th}$ constraint.
%The full control design problem also requires to formulate the constraints \eqref{eq:conditional_value_at_risk_safety_constraints_inf_horizon_both} in terms of $\bm\Phi^\star$. The following proposition presents an SDP formulation equivalent to \eqref{eq:conditional_value_at_risk_safety_constraints_inf_horizon_both} by embedding the function $\max\{\cdot - \tau, 0\}$ in \eqref{eq:cvar_constraint_definition} into a $(J+1)^{th}$ constraint.

\begin{lemma}\label{lem:drinc_constraints}
Under Assumption \ref{ass:support_is_polyhedron} and if $\epsilon > 0$, the constraints \eqref{eq:conditional_value_at_risk_safety_constraints_inf_horizon_both} can be reformulated as the following convex LMIs
\begin{subequations}
\label{eq:lem_quad_constraints}
\begin{align}
    \label{eq:lem_quad_constraints_cost}
%\lambda_k > 0, \kappa_{kij} \geq 0
    &  
    \rho \epsilon + \frac{\gamma - 1}{\gamma}\tau + \frac{1}{N} \!\sum_{i \in [N]} \zeta^{(i)} \leq 0 \,, \;\rho \geq 0 \,,
    % \\ \label{eq:lem_quad_constraints_cost_u}
    % &\lambda_u \epsilon + \frac{\gamma - 1}{\gamma}\tau_u + \frac{1}{N} \!\sum_{i \in [N]} \zeta_u^{(i)} \leq 0 \,, \;\lambda_u \geq 0 \,,
    \\ \label{eq:lem_quad_constraints_inequality}
    & \forall i \in [N] \,, \forall j \in [J+1] : \; \kappa_{ij} \geq 0\,,
    \\ \label{eq:lem_quad_constraints_lmi}
    & \!\!\lt[\matb
    \zeta^{(i)}\! \!-\! \frac{1}{\gamma} \!( G_{j}^\top \bm\Phi \bm \xi_T^{(i)} \!\!-\! g_{j} ) \!+\! ( H \bm \xi_T^{(i)} \!\!-\! h)^{\!\!\top} \!\kappa_{ij} \!\!\!\!\!
    &
    %\!\!\!\frac{1}{\gamma} a_{kj}^{\!\top} \bm\Phi_k  \!-\! \kappa_{kij}^{\!\top} H\!\!
    \star
    \\ 
    \bm\Phi^{\!\top} G_{j} \!-\! \gamma H^{\!\top}\! \kappa_{ij}
    &
    4\rho \gamma^2 I
    \mate\rt] \!\!\succeq\! 0,\!
\end{align}
\end{subequations}
where $G_{J+1} = 0$ and $g_{J+1} = -\tau$.
\end{lemma}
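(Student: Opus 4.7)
The plan is to reduce each CVaR constraint in \eqref{eq:conditional_value_at_risk_safety_constraints_inf_horizon_both} to a finite-dimensional LMI by combining the variational definition of CVaR with the piecewise-linear DRO reformulation of Proposition \ref{prop:cvar_original_deterministic_constraint_bounded_support}. The first step is to unfold CVaR via \eqref{eq:cvar_constraint_definition} and rewrite the constraint as
\begin{equation*}
    \sup_{\mathbb{Q} \in \mathbb{B}_\epsilon(\widehat{\mathbb{P}})} \inf_{\tau \in \mathbb{R}} \; \tau + \tfrac{1}{\gamma}\, \mathbb{E}_{\mathbb{Q}}\!\left[\max\{G_j^\top \bm{\Phi}\bm{\xi}_T - g_j - \tau,\, 0\}\right] \leq 0 \,.
\end{equation*}
I would then invoke Sion's minimax theorem to swap the sup and the inf: the objective is convex in $\tau$ (linear plus an expectation of a convex-in-$\tau$ map) and affine in $\mathbb{Q}$, while $\mathbb{B}_\epsilon(\widehat{\mathbb{P}})$ is convex and, by Prokhorov's theorem applied to the compact support $\bm{\Xi}$ granted by Assumption \ref{ass:support_is_polyhedron}, weak-$*$ compact. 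This promotes $\tau$ to an explicit decision variable, and the identity $\max\{x-\tau, 0\} = \max\{x, \tau\} - \tau$ allows us to convert the expression into
\begin{equation*}
    \tfrac{\gamma-1}{\gamma}\tau + \tfrac{1}{\gamma} \sup_{\mathbb{Q}} \mathbb{E}_{\mathbb{Q}}\!\left[\max\{G_j^\top \bm{\Phi}\bm{\xi}_T - g_j,\, \tau\}\right] \leq 0 \,,
\end{equation*}
which exposes the promised $\frac{\gamma-1}{\gamma}\tau$ term in \eqref{eq:lem_quad_constraints_cost}. The inner max is now a two-piece piecewise-linear function that matches the template of Proposition \ref{prop:cvar_original_deterministic_constraint_bounded_support}, with the original piece indexed by $j$ and the constant piece encoded through the convention $G_{J+1} = 0$, $g_{J+1} = -\tau$, so that $G_{J+1}^\top \bm{\Phi}\bm{\xi}_T - g_{J+1} = \tau$.

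Next, I would instantiate Proposition \ref{prop:cvar_original_deterministic_constraint_bounded_support} with $a_{j'} = \bm{\Phi}^\top G_{j'}$ and $b_{j'} = -g_{j'}$ for $j' \in \{j, J+1\}$, which produces a set of scalar inequalities parameterized by dual variables $\hat{\lambda} \geq 0$, $\hat{\kappa}_{ij'} \geq 0$ and epigraphic variables $\hat{s}^{(i)}$. A rescaling $\rho := \hat\lambda/\gamma$, $\zeta^{(i)} := \hat s^{(i)}/\gamma$, and $\kappa_{ij'} := \hat\kappa_{ij'}/\gamma$ absorbs the factor $1/\gamma$ in front of the sup; completing the square on the quadratic Lagrangian term exactly as in the proof of Proposition \ref{prop:cvar_original_deterministic_constraint_bounded_support} yields
\begin{equation*}
    \zeta^{(i)} - \tfrac{1}{\gamma}(G_{j'}^\top \bm{\Phi}\bm{\xi}_T^{(i)} - g_{j'}) + (H\bm{\xi}_T^{(i)} - h)^\top \kappa_{ij'} \; \geq \; \tfrac{1}{4\rho\gamma^{2}} \bigl\lVert \bm{\Phi}^\top G_{j'} - \gamma H^\top \kappa_{ij'} \bigr\rVert_2^2 \,.
\end{equation*}
A Schur complement with pivot $4\rho\gamma^2 I \succ 0$ then turns this scalar inequality into exactly the LMI \eqref{eq:lem_quad_constraints_lmi}, while the outer dual objective $\hat\lambda \epsilon + \tfrac{1}{N}\sum_i \hat s^{(i)} \leq -\tfrac{\gamma-1}{\gamma}\tau \cdot \gamma$ becomes (after the rescaling) the inequality \eqref{eq:lem_quad_constraints_cost}. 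The nonnegativity of $\kappa_{ij'}$ is preserved by the rescaling, giving \eqref{eq:lem_quad_constraints_inequality}.

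The main obstacles are bookkeeping and one honest subtlety. The bookkeeping consists in carefully tracking the factors of $\gamma$ through the rescaling and completing-the-square steps, for which Proposition \ref{prop:cvar_original_deterministic_constraint_bounded_support} provides a clean template to mirror. The genuine subtlety is the sup-inf swap, which I would handle via Sion's theorem as sketched; the limiting case $\rho = 0$ is benign because then the Schur pivot degenerates and one directly reads off $\bm{\Phi}^\top G_{j'} - \gamma H^\top \kappa_{ij'} = 0$ together with a scalar nonnegativity condition, which is subsumed by the LMI understood in the PSD sense.
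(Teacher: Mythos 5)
Your proposal follows essentially the same route as the paper: unfold the CVaR via \eqref{eq:cvar_constraint_definition}, encode the inner $\max\{\cdot-\tau,0\}$ as an additional affine piece with $G_{J+1}=0$, $g_{J+1}=-\tau$, invoke Proposition \ref{prop:cvar_original_deterministic_constraint_bounded_support}, rescale the dual variables by $\gamma$, and Schur-complement the resulting quadratic inequality; your explicit appeal to Sion's theorem for the $\sup$--$\inf$ swap is a point the paper leaves implicit, and the $\gamma$-bookkeeping you sketch checks out. The one place to be careful is your framing of the inner maximum as a \emph{two-piece} function for each $j$ separately: the lemma uses a single shared $\rho$, $\tau$, and $\zeta^{(i)}$ with the LMI imposed for all $j\in[J+1]$, which is the exact dual of \emph{one} DR-CVaR constraint on the $(J{+}1)$-piece maximum $\max_{j\in[J+1]} a_j^\top\bm\xi_T+b_j$ (as in the paper's proof, which instantiates Proposition \ref{prop:cvar_original_deterministic_constraint_bounded_support} once with all $J{+}1$ pieces). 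Applying the proposition separately per $j$ yields its own dual variables $\rho_j,\tau_j,\zeta^{(i)}_j$ for each constraint, and forcing them to coincide afterwards is only a sufficient condition for the per-$j$ constraints; to land exactly on \eqref{eq:lem_quad_constraints} you should treat the $J{+}1$ pieces jointly in a single application of the proposition.
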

\begin{proof}
    The proof can be found in Appendix \ref{app:proof_lem_drinc_constraints}.
\end{proof}

%\subsection{DRInC}
Leveraging Lemmas \ref{coro:quad_dro_prob_is_limit_linear}, \eqref{lem:drinc_cost}, and \eqref{lem:drinc_constraints}, we are now ready to reformulate \eqref{eq_risk_def} subject to \eqref{eq:conditional_value_at_risk_safety_constraints_inf_horizon_both} as SDP.
\begin{theorem}\label{thm:drinc}
Under Assumption \ref{ass:support_is_polyhedron} and if $\epsilon > 0$, the closed loop map given by
\begin{align*}
    \bm\Phi^\star = \argmin_{\bm\Phi \mt{ achievable}}\inf_{\substack{
    Q, s^{(i)}, \zeta^{(i)}, \tau,
    \\
    \lambda \geq 0, \rho \geq 0, \alpha \geq 0,
    \\
    \mu_i \geq 0, \kappa_{ij} \geq 0,
    \\ \psi_i \geq \mu_i}}  
    \hspace{-10pt}&\hspace{10pt}
    \lambda \epsilon + \frac{1}{N} \!\sum_{i \in [N]} s^{(i)}\,,
    \\ \nonumber
    &\st
    \\
    &\eqref{eq:lem_quad_cost_lmi}, \eqref{eq:lem_quad_constraints_cost},
    \\ &\eqref{eq:coro_dro_general_quad_risk_dual_inequality}, 
    \eqref{eq:coro_dro_general_quad_risk_dual_equality}, \hspace{4pt} \forall i \!\in\! [N],
    \\ &\eqref{eq:lem_quad_constraints_lmi},
    \hspace{30pt} \forall i \!\in\! [N], j \!\in\! [J \!+\! 1],
\end{align*}
is stable and satisfies the safety constraints \eqref{eq:conditional_value_at_risk_safety_constraints_inf_horizon_both}. Moreover, it optimizes \eqref{eq_risk_def} if $\bm\Xi$ is bounded and the optimizer $\lambda^\star$ is greater than $\lambda_{\max}\big({\bm\Phi^\star}^\top D \bm\Phi^\star\big)$.
\end{theorem}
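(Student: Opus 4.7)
The plan is to assemble the theorem from the three preceding lemmas, treating the SDP as a direct concatenation of their LMI reformulations, and then handle feasibility/optimality separately.

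First, I would argue stability. The optimization is performed over closed-loop maps $\bm\Phi$ satisfying the achievability constraints~\eqref{eq:achievability_constraints}; by the SLS parameterization recalled above \cite{wang2019system}, any such FIR $\bm\Phi$ corresponds to an internally stabilizing controller $\bm K(z)$, whose concrete implementation is given in Appendix~\ref{app:controller_implementation}. Hence every feasible point of the SDP is stabilizing, independent of the DRO bounds used afterwards.

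Second, I would handle the objective. By Lemma~\ref{lem:drinc_cost}, the original objective in~\eqref{eq_risk_def} equals $\lim_{\eta\to 0}\mathcal{R}_\epsilon(Q+|\eta|I)$ minimized over $(\bm\Phi,Q)$ satisfying the Schur complement~\eqref{eq:lem_quad_cost_lmi} (which encodes $Q \succeq \bm\Phi^\top D \bm\Phi$). For each fixed $Q$, Lemma~\ref{coro:quad_dro_prob_is_limit_linear} provides the convex upper bound $\mathcal{R}_\epsilon(Q)\leq \lambda\epsilon + \tfrac{1}{N}\sum_i s^{(i)}$ under \eqref{eq:coro_dro_general_quad_risk_dual_inequality}--\eqref{eq:coro_dro_general_quad_risk_dual_equality}. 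Substituting the upper bound into the minimization yields the SDP objective and the associated LMIs. The limit in $\eta$ can be absorbed because the LMI block $\lambda I - Q$ is continuous in $Q$ and the feasible set varies upper semicontinuously, so minimizing over $Q$ already includes the limiting value; I would make this explicit by noting that any feasible $(Q,\dots)$ for the SDP is feasible for all $\eta>0$ when $Q$ is replaced by $Q+|\eta|I$, and taking $\eta\to 0$.

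Third, for the CVaR constraints, I would invoke Lemma~\ref{lem:drinc_constraints} verbatim: the LMIs~\eqref{eq:lem_quad_constraints_cost}--\eqref{eq:lem_quad_constraints_lmi} are equivalent to the distributionally robust CVaR inequalities~\eqref{eq:conditional_value_at_risk_safety_constraints_inf_horizon_both}, so any feasible $\bm\Phi$ in the SDP automatically satisfies the safety specification. This gives guaranteed safety with no gap.

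Finally, for optimality I would use the tightness clause of Lemma~\ref{coro:quad_dro_prob_is_limit_linear}: whenever the optimal multiplier $\lambda^\star$ exceeds $\lambda_{\max}(Q^\star)$, the convex upper bound collapses to an equality, so the SDP value coincides with $\mathcal{R}_\epsilon(Q^\star)$. Combined with the Schur constraint~\eqref{eq:lem_quad_cost_lmi} binding at optimum (since decreasing $Q$ strictly decreases the cost by monotonicity of $\mathcal{R}_\epsilon$ in its positive semidefinite argument), we obtain $Q^\star = {\bm\Phi^\star}^\top D\bm\Phi^\star$, and the stated hypothesis $\lambda^\star \geq \lambda_{\max}({\bm\Phi^\star}^\top D \bm\Phi^\star)$ delivers optimality with respect to~\eqref{eq_risk_def}.

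The main obstacle I anticipate is the limit $\eta\to 0$: the matrix $Q + |\eta| I$ is strictly positive definite for $\eta>0$, which is what Lemma~\ref{coro:quad_dro_prob_is_limit_linear} nominally requires, whereas at the limit $Q$ could merely be positive semidefinite. I would address this by showing continuity of $\mathcal{R}_\epsilon$ at positive semidefinite arguments (monotone limit of a bounded sequence of convex DRO values) and noting that the SDP constraints are closed, so the infimum is attained and the limit and minimization commute. A secondary subtlety is verifying that tightness of the Schur complement at the optimum is not blocked by the other LMIs, which follows because $Q$ appears only in~\eqref{eq:coro_dro_general_quad_risk_dual_inequality}, \eqref{eq:coro_dro_general_quad_risk_dual_equality} and~\eqref{eq:lem_quad_cost_lmi} in a monotone manner.
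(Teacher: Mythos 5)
Your proposal is correct and follows essentially the same route as the paper's proof: stability from the FIR/achievability parameterization, equivalence of the safety constraints via Lemma \ref{lem:drinc_constraints}, the tight upper bound on the cost via Lemma \ref{coro:quad_dro_prob_is_limit_linear} applied to $Q = \bm\Phi^\top D\bm\Phi + |\eta| I \succ 0$, and the limit $\eta \to 0$ handled by Lemma \ref{lem:drinc_cost}. Your additional remarks on the semicontinuity of the $\eta$-limit and on the Schur constraint binding at the optimum are already subsumed by the squeeze argument inside the proof of Lemma \ref{lem:drinc_cost}, so no further work is needed.
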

\begin{proof}
    We first highlight that $\bm \Phi$ is FIR and therefore stable by definition. Second, the safety constraints \eqref{eq:conditional_value_at_risk_safety_constraints_inf_horizon_both} are equivalent to \eqref{eq:lem_quad_constraints}, as shown in Lemma \ref{lem:drinc_constraints}. Third, consider a closed loop map $\widehat{\bm\Phi}$, which optimizes the expectation of $\bm \xi_{T}^\top \widehat{\bm\Phi}^{\!\top} D \widehat{\bm\Phi} \bm \xi_{T} + |\eta|\|\bm\xi_T\|_2^2$ for $\eta \neq 0$. With $Q = \bm \Phi^\top D \bm \Phi + |\eta| I \succ 0$, Lemma \ref{coro:quad_dro_prob_is_limit_linear} shows that $\mc R_\epsilon(\bm \Phi^{\!\top} D \bm \Phi \bm)$ is tightly upper-bounded by \eqref{eq:coro_dro_general_quad_risk_dual}. Fourth and finally, as shown in Lemma \ref{lem:drinc_cost}, taking the limit $\eta \rightarrow 0$ yields $\widehat{\bm\Phi} \rightarrow \bm\Phi^\star$ from \eqref{eq_risk_def}, which concludes the proof.
\end{proof}
We remark that the reformulation proposed in Theorem~\ref{thm:drinc} is exact whenever the true shadow cost of robustification $\lambda$ is greater or equal than $\lambda_{\max}(Q)$, a condition which is always satisfied for sufficiently small $\epsilon$ as per Proposition~\ref{prop:lambda_is_big_enough_if_support_is}. When $\lambda$ is lower than $\lambda_{\max}(Q)$, the solution computed using Theorem \ref{thm:drinc} may instead be suboptimal. Nevertheless, our solution retains safety and stability guarantees in face of the uncertain distribution, since neither
\eqref{eq:lem_quad_constraints} nor the achievability constraints depend on $\lambda$.

%However, this does not happen if $\epsilon$ is sufficiently small and does not affect the safety constraints, which hold for any $\lambda$. Moreover, stability is also guaranteed by the achievability constraints.

%We highlight that the reformulation proposed in Theorem \ref{thm:drinc} may be suboptimal if the true shadow cost of robustification $\lambda$ is lower than $\lambda_{\max}(Q)$. However, this does not happen if $\epsilon$ is sufficiently small and does not affect the safety constraints, which hold for any $\lambda$. Moreover, stability is also guaranteed by the achievability constraints.

\section{Numerical results}
\label{sec:numerical}
In this section, we validate the effectiveness of the proposed control approach by benchmarking the policy $\bm{\pi}_{\operatorname{drinc}}$ synthesized using Theorem~\ref{thm:drinc} against the classical robust and LQG controllers (dubbed $\bm{\pi}_{\operatorname{rob}}$ and $\bm{\pi}_{\operatorname{LQG}}$) \cite{hassibi1999indefinite}, the distributionally robust LQG controller $\bm{\pi}_{\operatorname{DRLQG}}$ of \cite{taskesen2024distributionally}, and a certainty-equivalence policy $\bm{\pi}_{\operatorname{emp}}$, which is an instance of DRInC with $\epsilon \to 0$ in \eqref{eq:ambiguity_set_definition}. For our experiments\footnote{The source code that reproduces our numerical examples is available at \href{https://github.com/DecodEPFL/DRInC/tree/jsb_dev}{https://github.com/DecodEPFL/DRInC}.}, we consider a discrete-time double integrator system described by the linear dynamics
\begin{equation*}
    x_{t+1} = \begin{bmatrix}
        1 & 1\\ 0 & 1
    \end{bmatrix} x_t + \begin{bmatrix}
        0\\1
    \end{bmatrix} u_t + w_t\,, ~~
    y_t = \begin{bmatrix}
        1 & 0
    \end{bmatrix} x_t + v_t\,. 
\end{equation*}
We fix $T = 9$ and $D = \operatorname{blkdiag}(1, 4, 1)$ in \eqref{eq:quad_expectation_risk_def}, and we assume that the joint disturbance vector $\bm{\xi}$ takes value in $\bm{\Xi} = [-0.2, 1.0]^{d}$. We define the safe set for the state signal as $\mathcal{X} = \{(x_1, x_2) \in \mathbb{R}^2 : -6.4 \leq x_1 \leq 6.4, ~ -64 \leq x_2 \leq 64\}$, which can be defined by $4$ affine constraints. For simplicity, we do not consider input constraints, that is, we let $\mathcal{U} = \mathbb{R}$. With this definition at hand, we set the objective of satisfying \eqref{eq:cvar_constraint_definition} with $\gamma = 0.1$. Further, we assume that $N = 100$ disturbance trajectories as per \eqref{eq:training_samples} have been recorded and are available for control purposes; we use these samples to estimate $\widehat{\mathbb{P}}$ as per \eqref{eq:empirical_center_distribution} for designing $\bm{\pi}_{\operatorname{drinc}}$ and $\bm{\pi}_{\operatorname{emp}}$, and to estimate an empirical noise covariance matrix for designing $\bm{\pi}_{\operatorname{LQG}}$ and $\bm{\pi}_{\operatorname{DRLQG}}$. %

\subsection{The value of the empirical distribution}
In our first experiment, we evaluate the performance of the control policies mentioned above as the Wasserstein distance between the true probability distribution of $\xi_t = [w_t^\top, v_t^\top]^\top$ and $\widehat{\mathbb{P}}$ increases. We construct the reference distribution $\widehat{\mathbb{P}}$ and the noise covariance matrices by drawing $N = 100$ samples either from the truncated bimodal distribution shown in Figure~\ref{fig:training_samples_bimodal} or from the beta distribution shown in Figure~\ref{fig:training_samples_beta}. For designing $\bm{\pi}_{\operatorname{drinc}}$ and $\bm{\pi}_{\operatorname{DRLQG}}$, we set the radius $\epsilon$ of the ambiguity sets in Theorem~\ref{thm:drinc} and \cite[Proposition~1]{taskesen2024distributionally} equal to $0.1$, and we obtain the policy $\bm{\pi}_{\operatorname{emp}}$ as limit cases with $\epsilon \to 0$. Last, we synthesize the policy $\bm{\pi}_{\operatorname{rob}}$ by solving the following optimization problem using sampling:
\begin{subequations}
\begin{align}
    & ~ \argmin_{\bm{\Phi} \mt{ achievable}} ~
    \sup_{
    \bm{\xi}_{T} \in \bm{\Xi}
    } ~ 
    \bm \xi_{T}^\top \bm \Phi^\top D \bm \Phi \bm \xi_{T}\\
    &\st ~ \bm \Phi_x \bm \xi_{T} \in \mathcal{X}\,, \forall \bm{\xi}_T \in \bm{\Xi}\,.
\end{align}
\end{subequations}
% \begin{align} 
%     \!\!&\sup_{
%     \mathbb{Q} \in \mathbb{B}_{\epsilon}\big(\widehat{\mathbb{P}}\big) 
%     } \! \operatorname{CVaR}_{\gamma}^{{\bm\xi_T} \sim \bb Q}({G}_{j}^\top \bm \Phi \bm \xi_T - g_{j}) \leq 0, \forall j\! \in [J],\!
% \end{align}

\begin{figure}
    \centering
    \subfloat[Truncated bimodal distribution.]{\label{fig:training_samples_bimodal}
\includegraphics[width=0.24\textwidth,trim={0 0 0pt 0},clip]{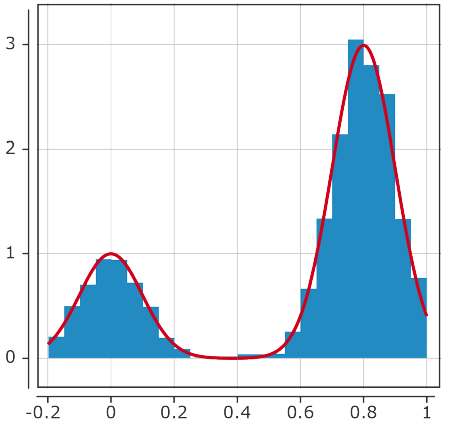}
}
     \subfloat[Beta distribution: $\alpha = \beta = 0.5$.]{\label{fig:training_samples_beta}
\includegraphics[width=0.24\textwidth,trim={0 0 0pt 0},clip]{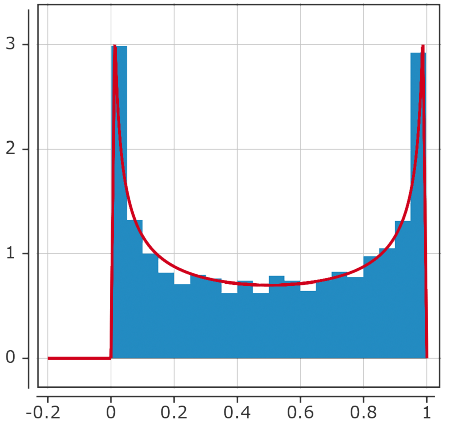}
}
    \caption{Probability density function of the real distribution $\bb P$ in red and the histogram of the empirical training distribution $\widehat{\bb P}$ for our first set of experiments in blue, marginalized to any dimension.}
    \label{fig:training_samples}
\end{figure}

\begin{figure*}
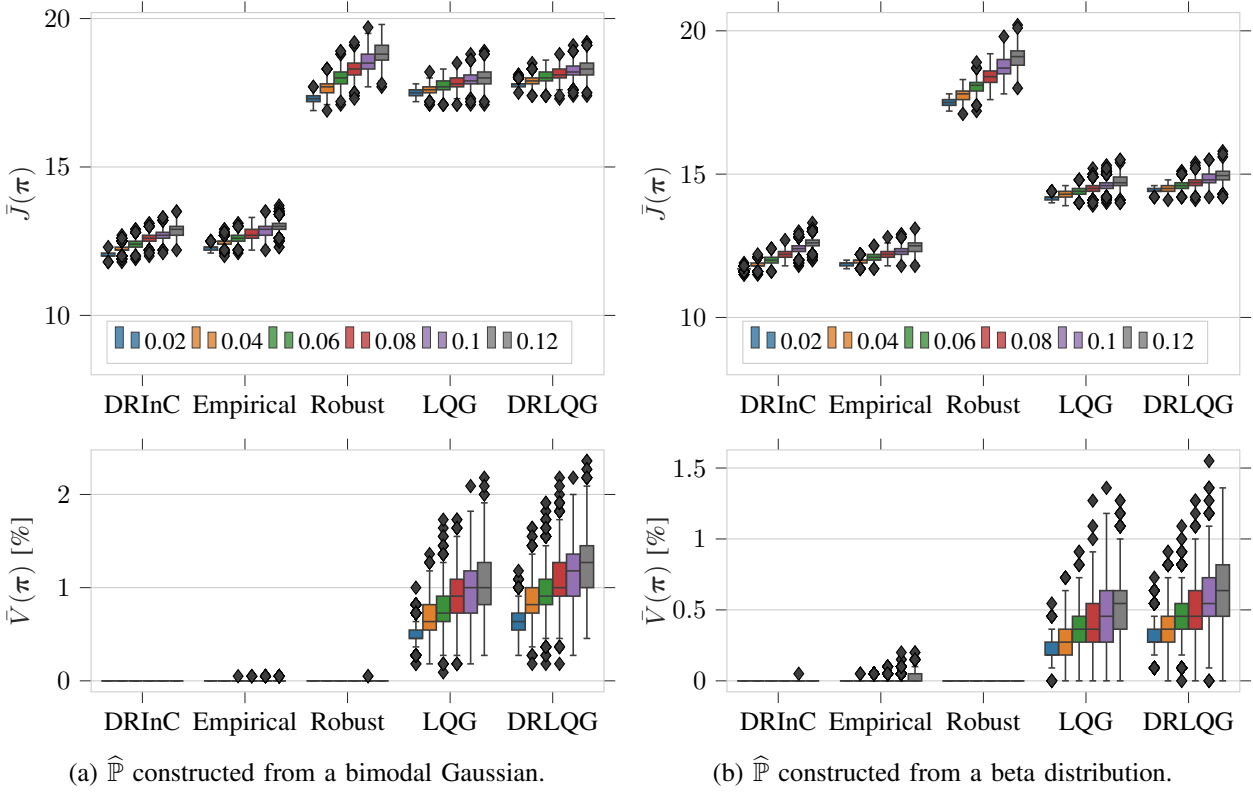

    \captionsetup[subfloat]{labelfont=normalsize,textfont=normalsize}
    \subfloat[$\widehat{\mathbb{P}}$ constructed from a bimodal Gaussian.]{
    \label{fig:performance_comparison_random_sampling_bimodal}
    \input{figs/random/double_integrator_bimodal_gaussian}}
    %\hspace{-50pt}
    \subfloat[$\widehat{\mathbb{P}}$ constructed from a beta distribution.]{
    \label{fig:performance_comparison_random_sampling_beta}
    \input{figs/random/double_integrator_beta}}
    \caption{Closed-loop performance comparison: average control cost and constraint violations when $\mathbb{P}$ is given by Figure~\ref{fig:training_samples} and $\mathbb{Q}$ is randomly sampled from $\mathbb{B}_\epsilon\big(\widehat{\mathbb{P}}\big)$ as per \eqref{eq:ambiguity_set_definition}. For increasing values of $\epsilon$, we use boxes of different colors to display the difference between the first and third quartiles of the data, using a solid line to denote the median value. The whiskers extend from the box to the farthest data point lying within 1.5x the inter-quartile range from the box, whereas the diamonds represent flier points that lie past the end of the whiskers.}
    \label{fig:performance_comparison_random_sampling}

    % https://stats.stackexchange.com/questions/637302/seaborn-boxplots-quartiles#:~:text=The%20box%20extends%20from%20the,(IQR)%20from%20the%20box. The box extends from the first quartile (Q1) to the third quartile (Q3) of the data, with a line at the median. The whiskers extend from the box to the farthest data point lying within 1.5x the inter-quartile range (IQR) from the box. Flier points are those past the end of the whiskers.
\end{figure*}

For the testing phase, we generate $1000$ randomly selected empirical distributions that lie at a fixed Wasserstein distance\footnote{To generate each testing distribution, we first split the $N$ samples in $\widehat{\mathbb{P}}$ in $\bar{N} = 10000$ to obtain more diverse distributions. We then move each sample by a random value in $[-1,1]^d$, project the results on $\bm{\Xi}$, and repeat this process until the new distribution is outside of $\bb{B}_\epsilon\big(\widehat{\mathbb{P}}\big)$. After sampling, we use a geodesic interpolation onto the Wasserstein sphere centered on $\widehat{\mathbb{P}}$ and of a particular radius \cite{geodesic_interp}.} from $\widehat{\mathbb{P}}$; in particular, we progressively increase $W^2\big(\widehat{\mathbb{P}}, \mathbb{Q}\big)$ starting from $0.02$ up to $0.12$. With these disturbance trajectories at hand, we simulate the evolution of the system \eqref{eq:system_dynamics_definition}, initialized at $x_0 = 0$, under all considered control policy, and record the average control cost $\bar{J}(\bm{\pi})$, that is, 
\begin{equation*}
    \bar{J}(\bm{\pi}) = \frac{1}{\bar{N} \bar{T}} \sum_{i = 1}^{\bar{N}} \sum_{t = 1}^{\bar{T}} \begin{bmatrix}
        x_{t, i}^\top & u_{t, i}^\top
    \end{bmatrix} D
    \begin{bmatrix}
        x_{t, i}\\ u_{t, i}
    \end{bmatrix}\,, ~~ \bar{T} = 50\,,%x_{t, i}^\top x_{t, i} + u_{t, i}^\top u_{t, i}\,,
\end{equation*}
and the average number $\bar{V}(\bm{\pi}) \in [0, 4]$ of safety constraint violations $x_{t, i} \in \mathcal{X}$ at each time step. We collect our results in the form of box plots in Figure~\ref{fig:performance_comparison_random_sampling}. In both cases, we can observe how extracting only second-order information from the uncertainty samples, as per $\bm{\pi}_{\operatorname{LQG}}$ and $\bm{\pi}_{\operatorname{DRLQG}}$, while neglecting the presence of two modes results in a higher control cost. Similarly, failing to exploit the availability of the training samples \eqref{eq:training_samples} and adopting a worst-case approach as per $\bm{\pi}_{\operatorname{rob}}$ also results in overly conservative decision-making. At the same time, these plots do not highlight a significant performance difference between $\bm{\pi}_{\operatorname{drinc}}$ and $\bm{\pi}_{\operatorname{emp}}$, even though the lack of robustification in the latter produces slightly more constraint violations. While this may seem counterintuitive, we remark that each testing empirical distribution lies in a space of dimension $d=30$. Hence, randomly sampling in $\mathbb{B}_{\epsilon}\big(\widehat{\mathbb{P}}\big)$ is unlikely to return the most averse distribution the policy $\bm{\pi}_{\operatorname{drinc}}$ was designed to counteract.

\begin{figure*}
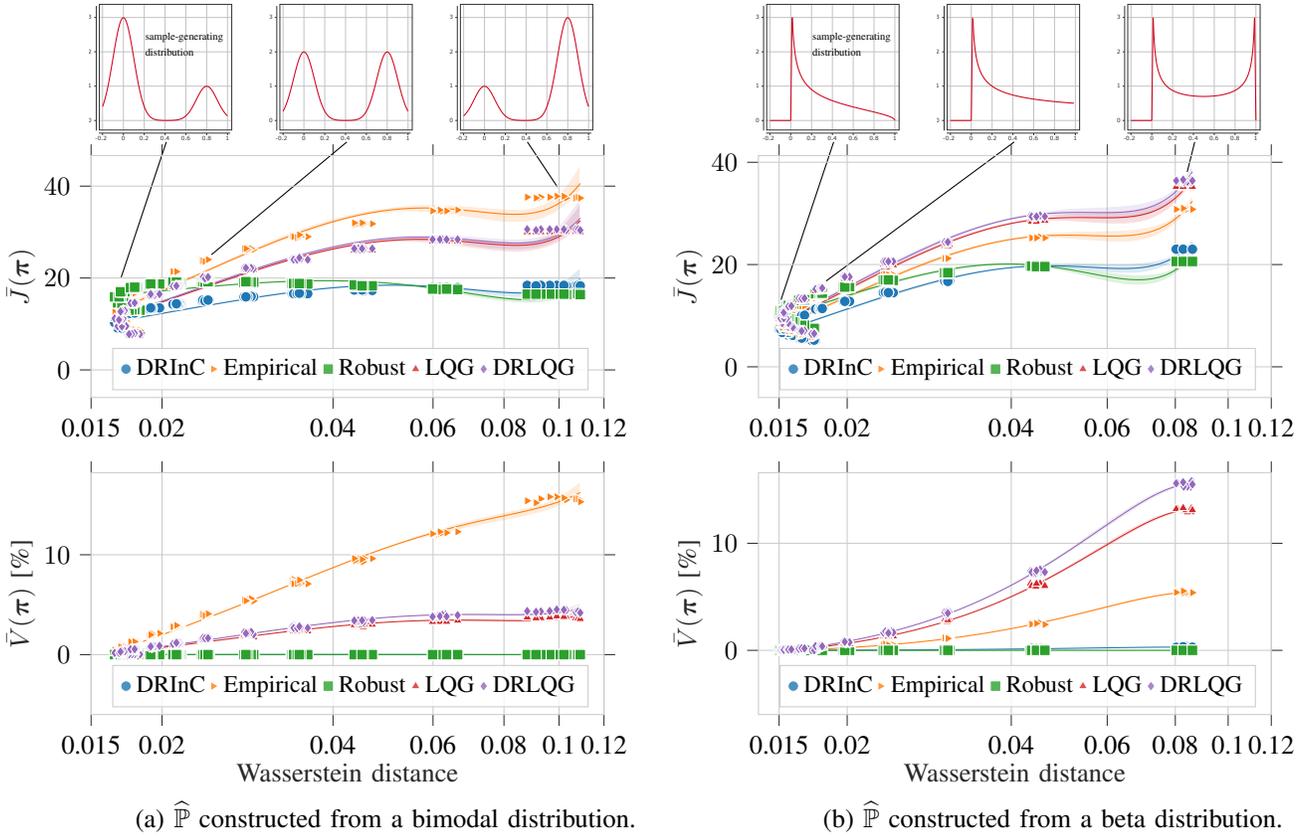

    \captionsetup[subfloat]{labelfont=normalsize,textfont=normalsize}
    \subfloat[$\widehat{\mathbb{P}}$ constructed from a bimodal distribution.]{
    \label{fig:performance_comparison_fixed_structure_bimodal}
    \input{figs/default/double_integrator_bimodal_gaussian}}
    \hspace{-50pt}
    \subfloat[$\widehat{\mathbb{P}}$ constructed from a beta distribution.]{
    \label{fig:performance_comparison_fixed_structure_beta}
    \input{figs/default/double_integrator_beta}}
    \caption{Closed-loop performance comparison: average control cost and constraint violations when $\widehat{\mathbb{P}}$ is constructed from samples from the training distributions in the top left boxes, and $\mathbb{Q}$ from samples from increasingly more different distributions in the same class (examples of such distributions are shown in the small panels on top of the main figure). We use markers of different colors to display the performance of each method, and we plot the trend as a third-order polynomial using solid lines. As both $\bm{\pi}_{\operatorname{drinc}}$ and $\bm{\pi}_{\operatorname{rob}}$ do not violate the safety constraints, the blue and green trend lines associated with these two policies on the bottom plot overlap.}
    \label{fig:performance_comparison_fixed_structure}
\end{figure*}

\subsection{The value of distributional robustness}
In our second experiment, to highlight the advantages brought about by robustifying against distributions in $\mathbb{B}_{\epsilon}\big(\widehat{\mathbb{P}}\big)$, we repeat our performance comparison when keeping the parametric structure of the probability distribution generating the disturbances fixed between training and test. In particular, for control design, we assume availability of samples from the truncated bimodal distribution and the beta distribution shown in the top left box of Figure~\ref{fig:performance_comparison_fixed_structure_bimodal} and Figure~\ref{fig:performance_comparison_fixed_structure_beta}, respectively. Then, we evaluate the performance of each control policy, in terms of both realized control cost $\bar{J}(\bm{\pi})$ and constraint violations rate $\bar{V}(\bm{\pi})$, as part of the probability mass concentrated in the mode centered at $0$ is moved from the second pile centered at $0.8$ in Figure~\ref{fig:performance_comparison_fixed_structure_bimodal}, and as the parameter $\beta$ vary from $1.5$ to $0.5$ in Figure~\ref{fig:performance_comparison_fixed_structure_beta}. We report our numerical findings in the form of scatter plot in the for main panels of Figure~\ref{fig:performance_comparison_fixed_structure}, and include a third-order polynomial trend line for interpretability. Interestingly, this second experiment clearly illustrates how the closed-loop performance of $\bm{\pi}_{\operatorname{emp}}$ rapidly degrades as the Wasserstein distance between the training and test distributions $\widehat{\mathbb{P}}$ and $\mathbb{Q}$ increases, highlighting the value of distributional robustness guarantees in mitigating the impact of any distribution shifts. Further, we can also observe that, while in terms of constraint violations $\bm{\pi}_{\operatorname{drinc}}$ and $\bm{\pi}_{\operatorname{rob}}$ perform similarly, in terms of closed-loop control cost the distributionally robust approach achieves higher performance when the Wasserstein distance between $\widehat{\mathbb{P}}$ and $\mathbb{Q}$ is sufficiently small, and that $\bm{\pi}_{\operatorname{rob}}$ starts outperforming $\bm{\pi}_{\operatorname{drinc}}$ over as we approach the boundaries of $\mathbb{B}_{\epsilon}\big(\widehat{\mathbb{P}}\big)$.

\section{Conclusion}
We have presented an end-to-end synthesis method from a collection of a finite number of disturbance realizations to the design of a stabilizing linear policy with DR safety and performance guarantees. Our approach consists in estimating an empirical distribution using samples of the uncertainty, and then computing a feedback policy that safely minimizes the worst-case expected cost over all distributions within a Wasserstein ball around the nominal estimate through the solution of an SDP. We have shown that, as the radius of this ambiguity set varies, our problem statement recovers classical control formulations. To address the resulting optimal control problem, we have established a novel tight convex relaxation for DRO of quadratic objectives. Then, we have combined our results with the system-level synthesis framework, presenting conditions under which our design method is non-conservative. Finally, we have demonstrated the value of exploiting past disturbance trajectories whenever available and of distributional robustness through numerical simulations. Interesting avenues for future research include methods for adapting the reference distribution $\widehat{\mathbb{P}}$ and the radius $\epsilon$ online based on newly observed disturbances samples, and extending the design to system with nonlinear dynamics.

\bibliographystyle{IEEEtran}
\bibliography{references}

% Generated by IEEEtran.bst, version: 1.14 (2015/08/26)
\begin{thebibliography}{10}
\providecommand{\url}[1]{#1}
\csname url@samestyle\endcsname
\providecommand{\newblock}{\relax}
\providecommand{\bibinfo}[2]{#2}
\providecommand{\BIBentrySTDinterwordspacing}{\spaceskip=0pt\relax}
\providecommand{\BIBentryALTinterwordstretchfactor}{4}
\providecommand{\BIBentryALTinterwordspacing}{\spaceskip=\fontdimen2\font plus
\BIBentryALTinterwordstretchfactor\fontdimen3\font minus
  \fontdimen4\font\relax}
\providecommand{\BIBforeignlanguage}[2]{{%
\expandafter\ifx\csname l@#1\endcsname\relax
\typeout{** WARNING: IEEEtran.bst: No hyphenation pattern has been}%
\typeout{** loaded for the language `#1'. Using the pattern for}%
\typeout{** the default language instead.}%
\else
\language=\csname l@#1\endcsname
\fi
#2}}
\providecommand{\BIBdecl}{\relax}
\BIBdecl

\bibitem{mohajerin2018data}
P.~Mohajerin~Esfahani and D.~Kuhn, ``Data-driven distributionally robust
  optimization using the wasserstein metric: Performance guarantees and
  tractable reformulations,'' \emph{Mathematical Programming}, vol. 171, no.
  1-2, pp. 115--166, 2018.

\bibitem{kuhn2019wasserstein}
D.~Kuhn, P.~M. Esfahani, V.~A. Nguyen, and S.~Shafieezadeh-Abadeh,
  ``Wasserstein distributionally robust optimization: Theory and applications
  in machine learning,'' in \emph{Operations research \& management science in
  the age of analytics}.\hskip 1em plus 0.5em minus 0.4em\relax Informs, 2019,
  pp. 130--166.

\bibitem{shafieezadeh2023new}
S.~Shafieezadeh-Abadeh, L.~Aolaritei, F.~D{\"o}rfler, and D.~Kuhn, ``New
  perspectives on regularization and computation in optimal transport-based
  distributionally robust optimization,'' \emph{arXiv preprint
  arXiv:2303.03900}, 2023.

\bibitem{gibbs2002choosing}
A.~L. Gibbs and F.~E. Su, ``On choosing and bounding probability metrics,''
  \emph{International statistical review}, vol.~70, no.~3, pp. 419--435, 2002.

\bibitem{villani2009optimal}
C.~Villani \emph{et~al.}, \emph{Optimal transport: old and new}.\hskip 1em plus
  0.5em minus 0.4em\relax Springer, 2009, vol. 338.

\bibitem{chen2022data}
Z.~Chen, D.~Kuhn, and W.~Wiesemann, ``Data-driven chance constrained programs
  over wasserstein balls,'' \emph{Operations Research}, 2022.

\bibitem{frogner2015learning}
C.~Frogner, C.~Zhang, H.~Mobahi, M.~Araya, and T.~A. Poggio, ``Learning with a
  wasserstein loss,'' \emph{Advances in neural information processing systems},
  vol.~28, 2015.

\bibitem{adu2022optimal}
D.~O. Adu, T.~Ba{\c{s}}ar, and B.~Gharesifard, ``Optimal transport for a class
  of linear quadratic differential games,'' \emph{IEEE Transactions on
  Automatic Control}, vol.~67, no.~11, pp. 6287--6294, 2022.

\bibitem{taskesen2024distributionally}
B.~Taskesen, D.~Iancu, {\c{C}}.~Ko{\c{c}}yi{\u{g}}it, and D.~Kuhn,
  ``Distributionally robust linear quadratic control,'' \emph{Advances in
  Neural Information Processing Systems}, vol.~36, 2024.

\bibitem{mark2020stochastic}
C.~Mark and S.~Liu, ``Stochastic {MPC} with distributionally robust chance
  constraints,'' \emph{IFAC-PapersOnLine}, vol.~53, no.~2, pp. 7136--7141,
  2020.

\bibitem{fochesato2022data}
M.~Fochesato and J.~Lygeros, ``Data-driven distributionally robust bounds for
  stochastic model predictive control,'' in \emph{2022 IEEE 61st Conference on
  Decision and Control (CDC)}.\hskip 1em plus 0.5em minus 0.4em\relax IEEE,
  2022, pp. 3611--3616.

\bibitem{aolaritei2023wasserstein}
L.~Aolaritei, M.~Fochesato, J.~Lygeros, and F.~D{\"o}rfler, ``Wasserstein tube
  {MPC} with exact uncertainty propagation,'' in \emph{2023 62nd IEEE
  Conference on Decision and Control (CDC)}.\hskip 1em plus 0.5em minus
  0.4em\relax IEEE, 2023, pp. 2036--2041.

\bibitem{pilipovsky2024distributionally}
J.~Pilipovsky and P.~Tsiotras, ``Distributionally robust density control with
  wasserstein ambiguity sets,'' \emph{arXiv preprint arXiv:2403.12378}, 2024.

\bibitem{coulson2021distributionally}
J.~Coulson, J.~Lygeros, and F.~D{\"o}rfler, ``Distributionally robust chance
  constrained data-enabled predictive control,'' \emph{IEEE Transactions on
  Automatic Control}, vol.~67, no.~7, pp. 3289--3304, 2021.

\bibitem{yang2020wasserstein}
I.~Yang, ``Wasserstein distributionally robust stochastic control: A
  data-driven approach,'' \emph{IEEE Transactions on Automatic Control},
  vol.~66, no.~8, pp. 3863--3870, 2020.

\bibitem{kargin2024wasserstein}
T.~Kargin, J.~Hajar, V.~Malik, and B.~Hassibi, ``Wasserstein distributionally
  robust regret-optimal control over infinite-horizon,'' in \emph{6th Annual
  Learning for Dynamics \& Control Conference}.\hskip 1em plus 0.5em minus
  0.4em\relax PMLR, 2024, pp. 1688--1701.

\bibitem{kim2023distributional}
K.~Kim and I.~Yang, ``Distributional robustness in minimax linear quadratic
  control with {W}asserstein distance,'' \emph{SIAM Journal on Control and
  Optimization}, vol.~61, no.~2, pp. 458--483, 2023.

\bibitem{krishnan2020probabilistic}
V.~Krishnan and S.~Mart{\'\i}nez, ``A probabilistic framework for
  moving-horizon estimation: Stability and privacy guarantees,'' \emph{IEEE
  Transactions on Automatic Control}, vol.~66, no.~4, pp. 1817--1824, 2020.

\bibitem{brouillon2023regularization}
J.-S. Brouillon, F.~D{\"o}rfler, and G.~Ferrari-Trecate, ``Regularization for
  distributionally robust state estimation and prediction,'' \emph{IEEE Control
  Systems Letters}, vol.~7, pp. 2713--2718, 2023.

\bibitem{aolaritei2022uncertainty}
L.~Aolaritei, N.~Lanzetti, H.~Chen, and F.~D{\"o}rfler, ``Distributional
  uncertainty propagation via optimal transport,'' \emph{arXiv preprint
  arXiv:2205.00343}, 2023.

\bibitem{aolaritei2023capture}
L.~Aolaritei, N.~Lanzetti, and F.~D{\"o}rfler, ``Capture, propagate, and
  control distributional uncertainty,'' in \emph{2023 62nd IEEE Conference on
  Decision and Control (CDC)}.\hskip 1em plus 0.5em minus 0.4em\relax IEEE,
  2023, pp. 3081--3086.

\bibitem{wang2019system}
Y.-S. Wang, N.~Matni, and J.~C. Doyle, ``A system-level approach to controller
  synthesis,'' \emph{IEEE Transactions on Automatic Control}, vol.~64, no.~10,
  pp. 4079--4093, 2019.

\bibitem{hakobyan2022wasserstein}
A.~Hakobyan and I.~Yang, ``Wasserstein distributionally robust control of
  partially observable linear systems: Tractable approximation and performance
  guarantee,'' in \emph{2022 IEEE 61st Conference on Decision and Control
  (CDC)}.\hskip 1em plus 0.5em minus 0.4em\relax IEEE, 2022, pp. 4800--4807.

\bibitem{park2018fundamentals}
K.~I. Park, M.~Park, and James, \emph{Fundamentals of probability and
  stochastic processes with applications to communications}.\hskip 1em plus
  0.5em minus 0.4em\relax Springer, 2018.

\bibitem{van2015distributionally}
B.~P. Van~Parys, D.~Kuhn, P.~J. Goulart, and M.~Morari, ``Distributionally
  robust control of constrained stochastic systems,'' \emph{IEEE Transactions
  on Automatic Control}, vol.~61, no.~2, pp. 430--442, 2015.

\bibitem{fournier2015rate}
N.~Fournier and A.~Guillin, ``On the rate of convergence in wasserstein
  distance of the empirical measure,'' \emph{Probability theory and related
  fields}, vol. 162, no. 3-4, pp. 707--738, 2015.

\bibitem{badings2022sampling}
T.~S. Badings, A.~Abate, N.~Jansen, D.~Parker, H.~A. Poonawala, and
  M.~Stoelinga, ``Sampling-based robust control of autonomous systems with
  non-gaussian noise,'' in \emph{Proceedings of the AAAI Conference on
  Artificial Intelligence}, vol.~36, no.~9, 2022, pp. 9669--9678.

\bibitem{blackmore2010probabilistic}
L.~Blackmore, M.~Ono, A.~Bektassov, and B.~C. Williams, ``A probabilistic
  particle-control approximation of chance-constrained stochastic predictive
  control,'' \emph{IEEE transactions on Robotics}, vol.~26, no.~3, pp.
  502--517, 2010.

\bibitem{hassibi1999indefinite}
B.~Hassibi, A.~H. Sayed, and T.~Kailath, \emph{Indefinite-quadratic estimation
  and control: a unified approach to $\mathcal{H}_2$ and $\mathcal{H}_\infty$
  theories}.\hskip 1em plus 0.5em minus 0.4em\relax SIAM, 1999.

\bibitem{zhou1998essentials}
K.~Zhou and J.~C. Doyle, \emph{Essentials of robust control}.\hskip 1em plus
  0.5em minus 0.4em\relax Prentice hall Upper Saddle River, NJ, 1998, vol. 104.

\bibitem{anderson2019system}
J.~Anderson, J.~C. Doyle, S.~H. Low, and N.~Matni, ``System level synthesis,''
  \emph{Annual Reviews in Control}, vol.~47, pp. 364--393, 2019.

\bibitem{dean2020sample}
S.~Dean, H.~Mania, N.~Matni, B.~Recht, and S.~Tu, ``On the sample complexity of
  the linear quadratic regulator,'' \emph{Foundations of Computational
  Mathematics}, vol.~20, no.~4, pp. 633--679, 2020.

\bibitem{boyd2004convex}
S.~Boyd and L.~Vandenberghe, \emph{Convex optimization}.\hskip 1em plus 0.5em
  minus 0.4em\relax Cambridge university press, 2004.

\bibitem{geodesic_interp}
F.~Santambrogio, ``$\{$Euclidean, metric, and Wasserstein$\}$ gradient flows:
  an overview,'' \emph{Bulletin of Mathematical Sciences}, vol.~7, pp. 87--154,
  2017.

\bibitem{borwein2005convex}
J.~Borwein and A.~Lewis, \emph{Convex Analysis and Nonlinear Optimization:
  Theory and Examples}.\hskip 1em plus 0.5em minus 0.4em\relax Springer New
  York, 2005.

\bibitem{gao2023distributionally}
R.~Gao and A.~Kleywegt, ``Distributionally robust stochastic optimization with
  wasserstein distance,'' \emph{Mathematics of Operations Research}, vol.~48,
  no.~2, pp. 603--655, 2023.

\bibitem{bartle2014elements}
R.~G. Bartle, \emph{The elements of integration and {L}ebesgue measure}.\hskip
  1em plus 0.5em minus 0.4em\relax John Wiley \& Sons, 2014.

\bibitem{shapiro2009semiinfinite}
A.~Shapiro, ``Semi-infinite programming, duality, discretization and optimality
  conditions,'' \emph{Optimization}, vol.~58, no.~2, pp. 133--161, 2009.

\bibitem{sion1958general}
M.~Sion, ``On general minimax theorems.'' \emph{Pacific Journal of
  Mathematics}, vol.~8, no.~4, pp. 171--176, 1958.

\end{thebibliography}

\appendix\section{Appendix}
\subsection{SLS controller implementation}
\label{app:controller_implementation}
The following proposition shows how to implement a controller that achieves a given pair of system responses $\bm{\Phi}_x$ and $\bm{\Phi}_u$.
\begin{proposition} \label{prop:controller_implementation}
    If the closed loop map $\bm \Phi$ is achievable, the corresponding control policy $\bm \pi(\bm \Phi)$ can be implemented as a linear system with dynamics
\vspace{10pt}
\begin{align}\label{eq:controller_implementation}
    \bm A_K &\!=\! \lt[\matb \,\smash[t]{\smalloverbrace{0}^{n}}\!\! & \smash[t]{\smalloverbrace{I}^{\!\!n(T-1)\!\!}}\!\! & 
    \smash[t]{\smalloverbrace{0}^{p}} & 
    \smash[t]{\smalloverbrace{0}^{pT}}\!\!
    \\ 0\!\! & -\bm\Phi_1 & 0\!\! & -\bm\Phi_3
    \\
    0\!\! & 0 & 0\!\! & I \\ 0\!\! & 0 & 0\!\! & 0\mate\rt]
    \!\!,\,
    \bm B_K \!=\! \lt[\matb 0 \\ -\bm\Phi_4 \\ 0 \\ I \mate\rt] \begin{matrix*}[l] \}\,  n(T-1) \\ \}\, n\quad \\ \}\, pT \quad \\ \}\, p \quad \end{matrix*} \!,
    \nonumber \\
    \bm C_K &\!= \big[
    \bm\Phi_5, \bm\Phi_8 C, \bm\Phi_7, \bm\Phi_8 
    \big],
\end{align}
where $
    \bm\Phi = \lt[\matb \bm\Phi_1& \bm\Phi_2& \bm\Phi_3& \bm\Phi_4 
    \\ 
    \smash[b]{\underbrace{\bm\Phi_5}_{n(T-1)}}& \smash[b]{\underbrace{\bm\Phi_6}_{2n}}& \smash[b]{\underbrace{\bm\Phi_7}_{pT}}& \smash[b]{\underbrace{\bm\Phi_8}_{p} }\mate\rt].
$
\vspace{24pt}
\end{proposition}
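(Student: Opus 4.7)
My plan is to verify that the state-space system $(\bm{A}_K, \bm{B}_K, \bm{C}_K)$ in \eqref{eq:controller_implementation}, when interconnected with the plant \eqref{eq:system_dynamics_definition}, reproduces the input/output signals dictated by the FIR closed-loop maps. Since the maps $\bm{\Phi}_x, \bm{\Phi}_u$ are already assumed to be achievable (and hence satisfy the affine subspace \eqref{eq:achievability_constraints}), the task reduces to an explicit implementation argument: build a controller whose internal state at time $t$ stores the last $T$ measurements $v_{t-T+1},\dots,v_t$ (equivalently, outputs with their state-contribution subtracted) and the last $T-1$ predicted contributions to the next state, and confirm that the output equation $u_t = \bm{C}_K \eta_t + [\text{feedthrough on } y_t]$ coincides with $u_t = \bm{\Phi}_u \bm{\xi}_{t-T:t}$ in \eqref{eq:FIR_xt_and_ut_from_noise_sequence}.

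First I would unpack the block partition of $\bm\Phi$: according to the displayed partition, $\bm\Phi_1,\bm\Phi_2$ collect the entries of $\bm\Phi_x$ corresponding to process-noise columns (split into the ``early'' block of width $n(T-1)$ and the ``late'' block of width $2n$), and $\bm\Phi_3,\bm\Phi_4$ the measurement-noise columns (width $pT$ and $p$); the rows $\bm\Phi_5,\ldots,\bm\Phi_8$ play the analogous role for $\bm\Phi_u$. Using \eqref{eq:FIR_xt_and_ut_from_noise_sequence} together with $w_t = x_{t+1} - Ax_t - Bu_t$ and $v_t = y_t - Cx_t$, I would rewrite $u_t$ as a linear function of past inputs, past outputs, and the current output $y_t$, isolating the portion that depends on $y_t$ (which becomes the direct-feedthrough term $\bm\Phi_8 \, y_t$ after the substitution $v_t = y_t - Cx_t$, with the $Cx_t$ piece absorbed elsewhere).

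Next I would define the controller state $\eta_t$ as a shift register: the first $n(T-1)$ components hold the ``early'' entries needed to reconstruct $\bm\Phi_x \bm{\xi}_{t-T:t}$, the next $n$ components hold a one-step prediction updated via $-\bm\Phi_1\,(\cdot) - \bm\Phi_3\,(\cdot) - \bm\Phi_4 y_t$ (this is exactly the second block row of $\bm{A}_K, \bm{B}_K$), and the last $p(T+1)$ components implement a shift register on the outputs (the third and fourth block rows of $\bm{A}_K$, together with $\bm{B}_K$, are a standard tapped-delay line on $y_t$). A direct block-matrix verification then shows that $\eta_{t+1} = \bm{A}_K \eta_t + \bm{B}_K y_t$ propagates these quantities correctly, while $\bm{C}_K \eta_t$ plus the feedthrough yields $u_t = \bm\Phi_u \bm{\xi}_{t-T:t}$.

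The main obstacle, which is essentially bookkeeping, is aligning the width/ordering conventions between the block partition of $\bm\Phi$ shown in the proposition and the partition used to stack $\bm{\xi}_{t-T:t} = [w_{t-T}^\top,\dots,w_t^\top,v_{t-T}^\top,\dots,v_t^\top]^\top$ in \eqref{eq:FIR_xt_and_ut_from_noise_sequence}: in particular, one must be careful that ``$\bm\Phi_6$ of width $2n$'' refers jointly to the contributions of $w_{t-1}$ and $w_t$ (the latter giving the strict-causality zero block that disappears in $\bm{C}_K$), and that $\bm\Phi_8 C$ appears in $\bm{C}_K$ precisely because a $y_t$-dependent term is traded against a $x_t$-dependent term via $v_t = y_t - Cx_t$. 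Once this alignment is fixed, closed-loop stability follows automatically from the FIR property of $\bm\Phi$, and no further argument is needed.
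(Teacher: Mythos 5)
There is a genuine gap in your verification strategy. You propose to check the realization by substituting $w_t = x_{t+1} - A x_t - B u_t$ and $v_t = y_t - C x_t$ into $u_t = \bm\Phi_u \bm\xi_{t-T:t}$ and re-expressing $u_t$ through past inputs and outputs, with the controller state holding (shifted copies of) $v_{t-T+1},\dots,v_t$. This cannot work as stated: the disturbances are not reconstructible from $(\bm u,\bm y)$ alone (the paper makes exactly this point in Remark~1), the realization \eqref{eq:controller_implementation} feeds only $\bm y$ --- not $\bm u$ --- into the controller, and its state blocks are raw outputs $y_{t-T},\dots,y_t$ together with the SLS internal variable $\bm\delta$ defined by $\bm\delta = (I - z\bm\Phi_{xw}(z))\bm\delta - \bm\Phi_{xv}(z)\bm y$, $\bm u = z\bm\Phi_{uw}(z)\bm\delta + \bm\Phi_{uv}(z)\bm y$. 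The paper's proof starts from this standard output-feedback implementation of \cite{wang2019system} (so the fact that it achieves the maps $\bm\Phi$ is inherited rather than re-derived) and only has to convert the FIR convolutions into the shift-register form \eqref{eq:controller_implementation}: the first two block rows of $\bm A_K,\bm B_K$ realize the $\bm\delta$ recursion and the last two a tapped delay line on $\bm y$. Your sketch never introduces $\bm\delta$ and never invokes the achievability constraints \eqref{eq:achievability_constraints} beyond strict causality, so the core of the argument is missing.

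Concretely, the step where this shows up is the term $\bm\Phi_8 C$ in $\bm C_K$. It is not produced by trading a $y_t$-dependent term against an $x_t$-dependent term through $v_t = y_t - Cx_t$; it is the coefficient $\Phi_{uw}(1)$ of $\delta_t$ in $u_t = \sum_{k\geq 1}\Phi_{uw}(k)\,\delta_{t-k+1} + \sum_{k\geq 0}\Phi_{uv}(k)\,y_{t-k}$, rewritten via the achievability identities $\Phi_{xw}(1)=I$ and $\Phi_{uw}(1)=\Phi_{uv}(0)C=\bm\Phi_8 C$ (together with $\Phi_{xv}(0)=0$, $\Phi(0)=0$, and $\Phi(T+1)=0$). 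These identities are what allow $\delta_t$ to be eliminated from its own update equation and retained as the second state block. Without them, and without the $\bm\delta$ dynamics, the block-matrix verification you defer to ``bookkeeping'' does not close.
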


\begin{proof}
From \cite{wang2019system},
\begin{align*}
    \bm\delta &= (I - z\bm\Phi_{xw}(z)) \bm\delta - \bm\Phi_{xv}(z) \bm y,
    \\
    \bm u &= z \bm \Phi_{uw}(z)\bm \delta + \bm\Phi_{uv}(z) \bm y,
\end{align*}
which means that at each timestep $t \geq T$, one has
%\begin{subequations}
\begin{align}
    \delta_{t} &= \delta_{t} - \sum_{k=1}^T \Phi_{xw}(k) \delta_{t-k+1} - \sum_{k=1}^T \Phi_{xv}(k) y_{t-k},
    \nonumber \\
    u_{t} &= \sum_{k=1}^T \Phi_{uw}(k) \delta_{t-k+1} + \sum_{k=0}^T \Phi_{uv}(k) y_{t-k}.
    \label{eq:appendix_controller_implementation_filter}
\end{align}
%\end{subequations}
The achievability constraints \eqref{eq:achievability_constraints_matrix} imply $\Phi_{xw}(1) = I$ and $\Phi_{uw}(1) = \Phi_{uv}(0)C$, (see Appendix \ref{app:details_achievability_constraints_matrix}). Hence, \eqref{eq:appendix_controller_implementation_filter} can be reformulated at as
\begin{align*}
    \delta_{t} &= -\sum_{k=1}^{T-1} \Phi_{xw}(k+1) \delta_{t-k} - \sum_{k=1}^T \Phi_{xv}(k) y_{t-k},
    \\
    u_{t} &= \Phi_{uv}(0)C \delta_t + \sum_{k=1}^{T-1} \Phi_{uw}(k+1) \delta_{t-k} + \sum_{k=0}^{T} \Phi_{uv}(k) y_{t-k}.
\end{align*}
Writing this controller implementation in matrix form and noting that $\Phi_{xv}(0) = 0$ yields 
\begin{align}
\label{eq:controller_implementation_almost_there}
    \delta_{t} = -\bm \Phi_x \bm \phi_{t-T:t} ,\,
    u_{t} = \bm \Phi_u \bm \phi_{t-T:t} + \Phi_{uv}(0)C \delta_t,
\end{align}
where $\bm \phi_{t-T:t} = [\delta_{t-T+1}^\top, \dots, \delta_{t-1}^\top, 0_{2n}, y_{t-T}^\top, \dots, y_{t}^\top]^{\!\top}$. Contructing a system with states $\bm \phi_{t-T:t}$, input $y_t$, output $u_t$, and respecting \eqref{eq:controller_implementation_almost_there}  yields the dynamics \eqref{eq:controller_implementation}, which concludes the proof.
\end{proof}

\subsection{Infinite horizon Achievability} \label{app:details_achievability_constraints_matrix}
\begin{proposition}
    The achievability constraints \eqref{eq:achievability_constraints} are equivalent to
    \begin{subequations}
    \label{eq:achievability_constraints_matrix}
    \begin{align}
        [I_n,0]\bm\Phi \!\lt[\matb \mc Z^- \!\otimes\! I_n & \!\!\!\!0 \\ 0 & \!\!\!\!\!\!\mc Z^- \!\otimes\! I_p \mate\rt] &\!=\, 
        [A,B]\bm\Phi \!\lt[\matb \mc Z^+ \!\otimes\! I_n & \!\!\!\!0 \\ 0 & \!\!\!\!\!\!\mc Z^+ \!\otimes\! I_p \mate\rt] 
        \nonumber \\
        &\!\!\!\!\!\!\!\!+ [\mc Z^+_{T+1} \!\otimes\! I_n, \mc Z^+_{T+1} \!\otimes\! 0C^{\!\top}],\!\!
        \\
        \bm\Phi\!\lt[\matb \mc Z^- \!\otimes\! I_n \\ \mc Z^- \!\otimes\! (0C) \mate\rt] \!=\,& 
        \bm\Phi\!\lt[\matb \mc Z^+ \!\otimes\! A \\ \mc Z^+ \!\otimes\! C \mate\rt]
        \nonumber \\
        &\!\!\!\!+ \lt[\matb\mc Z^+_{T+1} \!\otimes\! I_n \\ \mc Z^+_{T+1} \!\otimes\! (0B^\top) \mate\rt]\!,
        % [ I, 0 ] \bm \Phi_{xu} \bm E_0 &= [ A, B ] \bm \Phi_{xu} \bm E_{T} = 0,
        % \\
        % \bm \Phi_{xu} \bm E_0 [I, 0]^{\!\top} &= \bm \Phi_{xu} \bm E_T [ A^\top, C^\top]^\top = 0,
        % \\
        % [ I, 0 ] \bm \Phi_{xu} \bm E_1 &= [ I, B\Psi ],\, \bm \Phi_{xu} \bm E_1 [I, 0]^{\!\top} = [I, \Psi C]^{\!\top} \!\!,
        % \\
        %  [ I, 0 ] \bm \Phi_{xu} \bm E_{k} &= [ A, B ] \bm \Phi_{xu} \bm E_{k-1} ,\, \forall k \!=\! 2, \dots, T, 
        %  \\
        % \bm \Phi_{xu} \bm E_{k} \lt[\matb I \\ 0 \mate\rt] &= \bm \Phi_{xu} \bm E_{k-1} \lt[\matb A \\ C \mate\rt],\, \forall k \!=\! 2, \dots, T, 
    \end{align}
    \end{subequations}
    % where $\bm E_k = \lt[\matb [0_{n \times n(T-k)}, I, 0_{n \times nk}]^\top \!\! , \quad 0_{nT\times p} \\ 0_{pT\times n},\quad  [0_{p \times p(T-k)}, I_p, 0_{p \times pk}]^\top \mate\rt]$ and $\Psi = \Phi_{uv}(0) = [0_{m \times n}, I] \bm \Phi_{xu} \bm E_0 [0_{p \times n}, I]^\top$
    where $\mc Z^+ = [I_{T+1}, 0]$, $\mc Z^- = [0, I_{T+1}]$ are in $\bb R^{(T+1)\times(T+2)}$, and $\mc Z^+_{T+1}$ is the last row of $\mc Z^+$.
\end{proposition}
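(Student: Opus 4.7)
The plan is to expand both sides of the $z$-transform identities \eqref{eq:achievability_constraints} into their FIR impulse-response coefficients, match coefficients power-by-power of $z^{-k}$, and then recognize that stacking these scalar-matrix relations is exactly what the shift operators $\mc{Z}^\pm$ encode in \eqref{eq:achievability_constraints_matrix}.

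First, I would partition \eqref{eq:achievability_constraints} into four constraints acting on $\bm{\Phi}_{xw}, \bm{\Phi}_{xv}, \bm{\Phi}_{uw}, \bm{\Phi}_{uv}$. Substituting $\bm{\Phi}_\xi(z) = \sum_{k=0}^{T}\Phi(k)z^{-k}$ into $(zI-A)\bm{\Phi}_{xw}(z) - B\bm{\Phi}_{uw}(z) = I$ and invoking strict causality ($\Phi_{xw}(0) = \Phi_{uw}(0) = 0$), matching coefficients of $z^{-k}$ yields: $\Phi_{xw}(1) = I$ (at $z^0$); $\Phi_{xw}(k{+}1) = A\Phi_{xw}(k) + B\Phi_{uw}(k)$ for $k = 1, \dots, T-1$; and the terminal condition $A\Phi_{xw}(T) + B\Phi_{uw}(T) = 0$ (at $z^{-T}$). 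The analogous expansion for the $v$-column, together with the right-hand constraint in \eqref{eq:achievability_constraints}, produces a parallel set of recursions for $\bm{\Phi}_{xv}, \bm{\Phi}_{uv}$ and their counterparts for the second identity.

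Next I would exhibit the algebraic action of the shift operators. Since $\bm{\Phi}_x = [\Phi_{xw}(T),\dots,\Phi_{xw}(0),\Phi_{xv}(T),\dots,\Phi_{xv}(0)]$, right-multiplying $\bm{\Phi}_x$ by $\operatorname{blkdiag}(\mc{Z}^-\!\otimes I_n,\mc{Z}^-\!\otimes I_p)$ prepends a zero block on the $w$- and $v$-sides, producing the shifted sequence $[0,\Phi_{xw}(T),\dots,\Phi_{xw}(0),\,0,\Phi_{xv}(T),\dots,\Phi_{xv}(0)]$; analogously, $\operatorname{blkdiag}(\mc{Z}^+\!\otimes I_n,\mc{Z}^+\!\otimes I_p)$ appends a zero block on the right. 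Equating these two shifted sequences column-block by column-block (with the $[A,B]\bm{\Phi}$ on one side and $[I,0]\bm{\Phi}$ on the other) reproduces precisely the recursion $\Phi_{xw}(k{+}1) = A\Phi_{xw}(k) + B\Phi_{uw}(k)$ together with its $v$-analogue and the terminal equations. The single remaining boundary equation, $\Phi_{xw}(1) = I$, is injected by the additive term $[\mc{Z}^+_{T+1}\!\otimes I_n, \mc{Z}^+_{T+1}\!\otimes 0 C^\top]$, which places $I_n$ in the correct block slot so that the $z^0$-coefficient matching in the first achievability identity is satisfied. The second identity of \eqref{eq:achievability_constraints_matrix} is derived symmetrically by expanding $\bm{\Phi}_\xi(z)[(zI-A)^\top,-C^\top]^\top = [I,0]^\top$ and reading off the coefficients, with $\mc{Z}^-, \mc{Z}^+$ now acting from the left through their Kronecker liftings.

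The conclusion is that \eqref{eq:achievability_constraints} and \eqref{eq:achievability_constraints_matrix} encode the same linear system of $(T+2)$ block-matrix equations in $\Phi(k)$; each implication follows by reading the identities in the opposite direction (compressing matched coefficients into the shifted form, or expanding the shifted form block-wise to recover the coefficient identities).

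The main obstacle will be bookkeeping: aligning the reverse-time ordering of the blocks in $\bm{\Phi}_x, \bm{\Phi}_u$ with the action of $\mc{Z}^\pm\otimes I$, ensuring the Kronecker products with $I_n$ versus $I_p$ line up across the $w$- and $v$-columns, and pinning down exactly where the $\mc{Z}^+_{T+1}$ boundary row should deliver the identity block so that $\Phi_{xw}(1) = I$ (resp.\ $\Phi_{xw}^{\!\top}$-side) comes out correctly. A careful dimension audit and an explicit check at the two boundary indices $k=0$ and $k=T$ should suffice to nail down the matching without any substantive new ideas.
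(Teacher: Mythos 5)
Your proposal is correct and follows essentially the same route as the paper's proof: substitute the FIR expansion into \eqref{eq:achievability_constraints}, match coefficients of $z^{-k}$ to obtain the initial conditions, the block recursions $\Phi_{xw}(k{+}1)=A\Phi_{xw}(k)+B\Phi_{uw}(k)$ (and analogues), and the terminal condition, then recognize that the Kronecker-lifted shift matrices $\mc Z^{\pm}$ stack exactly these relations, with the additive $\mc Z^+_{T+1}$ term supplying the boundary identity block. The bookkeeping issues you flag (reverse-time ordering of the blocks in $\bm\Phi$, alignment of the $I_n$ versus $I_p$ Kronecker factors, and placement of the boundary row) are precisely the points the paper's proof resolves by writing out the full block matrices, so carrying out that dimension audit completes the argument.
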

\begin{proof}
By treating $\bm{\Phi}_x$ and $\bm{\Phi}_u$ as FIR filters:
\begin{align*}
    &\sum_{k = 1}^T \! \Phi_{xw}(k) z^{-k+1} \!-\! A \Phi_{xw}(k) z^{-k} \!-\! B \Phi_{uw}(k) z^{-k} \!=\! I,\\
    &\sum_{k = 1}^T \! \Phi_{xv}(k) z^{-k+1} \!-\! A \Phi_{xv}(k) z^{-k} \!-\! B \Phi_{uv}(k) z^{-k} \!=\! B \Phi_{uv}(0),\\
    &\sum_{k = 1}^T \! \Phi_{xw}(k) z^{-k+1} \!-\! \Phi_{xw}(k)A z^{-k} \!-\! \Phi_{xv}(k) C z^{-k} \!=\! I,\\
    &\sum_{k = 1}^T \! \Phi_{uw}(k) z^{-k+1} \!-\! \Phi_{uw}(k)A z^{-k} \!-\! \Phi_{uv}(k) C z^{-k} \!=\! \Phi_{uv}(0) C,
\end{align*}
which is equivalent to
\begin{subequations}
\begin{align}
    \label{eq:achievability_constraint_proof_a}
    &\Phi_{xw}(0) \!=\! 0, \Phi_{xv}(0) \!=\! 0, \Phi_{uw}(0) \!=\! 0,
    \\ \label{eq:achievability_constraint_proof_b}
    &\Phi_{xw}(1) \!=\! I, \Phi_{xv}(1) \!=\! B \Phi_{uv}(0), \Phi_{uw}(1) \!=\! \Phi_{uv}(0) C,
    \\ \label{eq:achievability_constraint_proof_c}
    &\Phi_{xw}(k+1) \!=\! A \Phi_{xw}(k) \!+\! B \Phi_{uw}(k)\,, \forall k = 1,\dots, T,
    \\ \label{eq:achievability_constraint_proof_d}
    &\Phi_{xv}(k+1) \!=\! A \Phi_{xv}(k) \!+\! B \Phi_{uv}(k)\,, \forall k = 1,\dots, T,
    \\ \label{eq:achievability_constraint_proof_e}
    &\Phi_{xw}(k+1) \!=\! \Phi_{xw}(k) A \!+\! \Phi_{xv}(k) C\,, \forall k = 1,\dots, T,
    \\ \label{eq:achievability_constraint_proof_f}
    &\Phi_{uw}(k+1) \!=\! \Phi_{uw}(k) A \!+\! \Phi_{uv}(k) C\,, \forall k = 1,\dots, T,
    \\ \label{eq:achievability_constraint_proof_g}
    &\Phi(T+1) \!=\! 0.
\end{align}
\end{subequations}
In matrix form, this yields
\begingroup % keep the change local
\setlength\arraycolsep{4pt}
\begin{align*}
&[I, 0]\bm\Phi
\matb
\lt[\matb\!
    \lt[\matb
    0 & I & \dots & 0 & 0 & 0
    \\[-4pt]
    \vdots & \vdots & \ddots & \vdots & \vdots & \vdots
    \\
    0 & 0 & \dots & I & 0 & 0
    \\
    0 & 0 & \dots & 0 & I & 0
    \\
    0 & 0 & \dots & 0 & 0 & I
    \mate\rt] & \!\!\!\!\!0
    \\
    0 & \!\!\!\!\!\lt[\matb
    0 & I & \dots & 0 & 0 & 0
    \\[-4pt]
    \vdots & \vdots & \ddots & \vdots & \vdots & \vdots
    \\
    0 & 0 & \dots & I & 0 & 0
    \\
    0 & 0 & \dots & 0 & I & 0
    \\
    0 & 0 & \dots & 0 & 0 & I
    \mate\rt]\!\!
\mate\rt]
\\
\aunderbrace[l1r]{}[D]_{\eqref{eq:achievability_constraint_proof_g}}\;
\underbrace{\hspace{40pt}}_{\eqref{eq:achievability_constraint_proof_c}}\;
\aunderbrace[l1r]{}[D]_{\eqref{eq:achievability_constraint_proof_b}}\;
\aunderbrace[l1r]{}[D]_{\eqref{eq:achievability_constraint_proof_a}}
\quad
\aunderbrace[l1r]{}[D]_{\eqref{eq:achievability_constraint_proof_g}}\;
\underbrace{\hspace{40pt}}_{\eqref{eq:achievability_constraint_proof_d}}\;
\aunderbrace[l1r]{}[D]_{\eqref{eq:achievability_constraint_proof_b}}\;
\aunderbrace[l1r]{}[D]_{\eqref{eq:achievability_constraint_proof_a}}
\mate
\\
& = \big[[0, 0, \dots, 0, I, 0], [0, 0, \dots, 0, 0, 0]\big] +
\\
&[A,B]\bm\Phi
\matb
\lt[\matb\!
    \lt[\matb
    I & 0 & \dots & 0 & 0 & 0
    \\
    0 & I & \dots & 0 & 0 & 0
    \\[-4pt]
    \vdots & \vdots & \ddots & \vdots & \vdots & \vdots
    \\
    0 & 0 & \dots & I & 0 & 0
    \\
    0 & 0 & \dots & 0 & I & 0
    \mate\rt] & \!\!\!\!\!0
    \\
    0 & \!\!\!\!\!\lt[\matb
    I & 0 & \dots & 0 & 0 & 0
    \\
    0 & I & \dots & 0 & 0 & 0
    \\[-4pt]
    \vdots & \vdots & \ddots & \vdots & \vdots & \vdots
    \\
    0 & 0 & \dots & I & 0 & 0
    \\
    0 & 0 & \dots & 0 & I & 0
    \mate\rt]\!\!
\mate\rt]
\\
\aunderbrace[l1r]{}[D]_{\eqref{eq:achievability_constraint_proof_g}}\;
\underbrace{\hspace{40pt}}_{\eqref{eq:achievability_constraint_proof_c}}\;
\aunderbrace[l1r]{}[D]_{\eqref{eq:achievability_constraint_proof_b}}\;
\aunderbrace[l1r]{}[D]_{\eqref{eq:achievability_constraint_proof_a}}
\quad
\aunderbrace[l1r]{}[D]_{\eqref{eq:achievability_constraint_proof_g}}\;
\underbrace{\hspace{40pt}}_{\eqref{eq:achievability_constraint_proof_d}}\;
\aunderbrace[l1r]{}[D]_{\eqref{eq:achievability_constraint_proof_b}}\;
\aunderbrace[l1r]{}[D]_{\eqref{eq:achievability_constraint_proof_a}}
\mate,
\end{align*}
\endgroup
and
\begingroup % keep the change local
\setlength\arraycolsep{4pt}
\begin{align*}
\bm\Phi
\matb
\lt[\matb\!
    \lt[\matb
    0 & I & \dots & 0 & 0 & 0
    \\[-4pt]
    \vdots & \vdots & \ddots & \vdots & \vdots & \vdots
    \\
    0 & 0 & \dots & I & 0 & 0
    \\
    0 & 0 & \dots & 0 & I & 0
    \\
    0 & 0 & \dots & 0 & 0 & I
    \mate\rt]\!\!
    \\
    \!\lt[\matb
    0 & 0 & \dots & 0 & 0 & 0
    \\[-4pt]
    \vdots & \vdots & \ddots & \vdots & \vdots & \vdots
    \\
    0 & 0 & \dots & 0 & 0 & 0
    \\
    0 & 0 & \dots & 0 & 0 & 0
    \\
    0 & 0 & \dots & 0 & 0 & 0
    \mate\rt]\!\!
\mate\rt]
\\
\,
\aunderbrace[l1r]{}[D]_{\eqref{eq:achievability_constraint_proof_g}}\;
\underbrace{\hspace{40pt}}_{\eqref{eq:achievability_constraint_proof_e}, \eqref{eq:achievability_constraint_proof_f}}\;
\aunderbrace[l1r]{}[D]_{\eqref{eq:achievability_constraint_proof_b}}\;
\aunderbrace[l1r]{}[D]_{\eqref{eq:achievability_constraint_proof_a}}
\quad
\mate
=
\setlength\arraycolsep{3pt}
\bm\Phi
\matb
\lt[\matb\!
    \lt[\matb
    A & 0 & \dots & 0 & 0 & 0
    \\
    0 & A & \dots & 0 & 0 & 0
    \\[-4pt]
    \vdots & \vdots & \ddots & \vdots & \vdots & \vdots
    \\
    0 & 0 & \dots & A & 0 & 0
    \\
    0 & 0 & \dots & 0 & A & 0
    \mate\rt]\!\!
    \\
    \!\lt[\matb
    C & 0 & \dots & 0 & 0 & 0
    \\
    0 & C & \dots & 0 & 0 & 0
    \\[-4pt]
    \vdots & \vdots & \ddots & \vdots & \vdots & \vdots
    \\
    0 & 0 & \dots & C & 0 & 0
    \\
    0 & 0 & \dots & 0 & C & 0
    \mate\rt]\!\!
\mate\rt]
\\
\,
\aunderbrace[l1r]{}[D]_{\eqref{eq:achievability_constraint_proof_g}}\;
\underbrace{\hspace{40pt}}_{\eqref{eq:achievability_constraint_proof_e}, \eqref{eq:achievability_constraint_proof_f}}\;
\aunderbrace[l1r]{}[D]_{\eqref{eq:achievability_constraint_proof_b}}\;
\aunderbrace[l1r]{}[D]_{\eqref{eq:achievability_constraint_proof_a}}
\quad
\mate
\\
+ \lt[\matb 0, 0, \dots, 0, I, 0 \\ 0, 0, \dots, 0, 0, 0 \mate\rt] \!.
\end{align*}
\endgroup
The matrices can be written in a compact form as \eqref{eq:achievability_constraints_matrix}, which concludes the proof.
\end{proof}

\subsection{Proof of Proposition \ref{prop:cvar_original_deterministic_constraint_bounded_support}}
\label{app:proof_prop_bounded_support}

% We start by observing that an upper bound on an infimum is equivalent to an existence constraint. Hence, we can remove the infimum in \eqref{eq:cvar_constraint_definition}, and rewrite the left-hand side of \eqref{eq:conditional_value_at_risk_safety_constraints_inf_horizon} as
% \begin{align*}
%     \sup_{\mathbb{Q} \in \mathbb{B}_{x_t}(\bm{\Phi}_x)} 
%      \bb E_{x \sim \bb Q}\lt[\max\lt\{\tau_x, \frac{1}{\gamma} ( {a}_{xj}^\top {x_t} \!+\! b_{xj} \!-\! \tau_x \!+\! \gamma \tau_x)\rt\}\rt] \!\!.
% \end{align*}
% Using Lemma \ref{lem:sup_with_propagated_balls_equal_cost_with_phi}, we can rewrite the constraint in terms of $\bm e_T$ rather than $x_t$, which gives
% \begin{align*}
%     \sup_{\bb Q \in \bb B_\epsilon^{\|\cdot\|_2^2}(\widehat{\bb P})} \!\!
%      \bb E_{\bm e_T \sim \bb Q} \!\lt[\max \! \lt\{\tau_x, \frac{1}{\gamma} ( {a}_{xj}^\top \bm \Phi_x \bm e_T \!+\! b_{xj} \!+\! \gamma \tau_x \!-\! \tau_x)\rt\}\rt] \!\!.
% \end{align*}
% Similar to \cite{aolaritei2023capture}, we group the CVaR constraint into one. This yields $\sup_{\bb Q \in \bb B_\epsilon^{\|\cdot\|_2^2}(\widehat{\bb P})} \bb E_{\bm e_T \sim \bb Q} \,\max_{j \in [J_x+1]} \ell_{xj}(\bm \Phi_x, \bm e_T)$, where
% \begin{align*}
% \ell_{xj}(\bm \Phi_x, \bm e_T) = \gamma^{-1} (a_{xj}^\top \bm \Phi_x \bm e_T \!+\! b_{xj} \!+\! \gamma \tau_x \!-\! \tau_x ),
% \end{align*}
% and $(a_{x,J_x+1}, b_{x,J_x+1}) = (0, \tau_x)$ such that $\ell_{x,J_x+1}$ corresponds to the zero in \eqref{eq:cvar_constraint_definition}.
% The function $\ell_{xj}$ is bilinear and therefore both convex and concave in both its arguments.

The risk \eqref{eq:prop_dro_general_risk_def} is contingent on three mathematical objects:
\begin{enumerate}[label=(\roman*)]
    \item A loss function $\max\limits_{j\in[J]} \ell_j(\bm\xi_T) = \max\limits_{j\in[J]} a_j^\top \bm\xi_T + b_j$,
    \item A transport cost $c(\bm\xi_T,\bm\xi_T^{(i)}) = \|\bm\xi_T - \bm\xi_T^{(i)}\|_2^2$,
    \item and a support $\bm\Xi = \Big\{\bm \xi : \max\limits_{k\in [n_H]} f_k( \bm \xi) \leq 0\Big\}$, where $n_H$ is the number of rows in $H$ and $f_k(\bm \xi) = H_k \bm \xi - h_k$.
\end{enumerate}
Moreover, since the loss is concave and both the transport cost and the support are convex, \eqref{eq:prop_dro_general_risk_def} shows strong duality properties if and only if it is strictly feasible. The strict feasibility is guaranteed by the full-dimensionality of $\bm\Xi$ and the strict positivity of $\epsilon$. The dual problem is given by \cite{shafieezadeh2023new} as
\begin{align*}
    &\inf_{s^{(i)}, \lambda \geq 0} \lambda \epsilon + \frac{1}{N}\sum_{i=0}^N s^{(i)},
    \\
    &\st \sup_{\bm \xi_T \in \bm \Xi} \ell(\bm \xi_T) - \lambda c(\bm \xi_T,\bm \xi_T^{(i)}) \leq s^{(i)} \,,\forall i \in [N].
\end{align*}
While the dual problem does not seem much simpler to solve than the primal at first glance, we use \cite[Proposition~2.12]{shafieezadeh2023new} to reformulate it using convex conjugates. In our own notation, this gives
%
%\begin{subequations}\label{eq:prop_212}
\begin{align}
    &\!\inf_{s^{(i)}, \lambda \geq 0 \,, \kappa_{ijk} \geq 0}\! \lambda \epsilon \!+ \!\frac{1}{N} \!\!\sum_{i \in [N]}\!\! s^{(i)} \!,\, \st \,, \forall i \!\in\! [N], \forall j \!\in\! [J]\!:
    \nonumber \\ \nonumber %\label{eq:prop_212_main_inequality}
    &s^{(i)} \geq (-\ell_{j})^\star(\zeta_{ij}^\ell)
    %\\ \nonumber
    %&\hspace{40pt} 
    + \lambda c^{\star} \!\lt(\! \frac{\zeta_{ij}^c}{\lambda}, \widehat{\bm \xi}_T^{(i)} \!\!\rt)\! + \!\!\sum_{k \in [n_H]}\!\! \kappa_{ijk} f_k^\star \!\lt(\! \frac{\zeta_{ijk}^f}{\kappa_{ijk}} \!\rt) \!\!,
    \\ \label{eq:prop_212}  %\label{eq:prop_212_zeta_equality}
    & \zeta_{ij}^\ell + \zeta_{ij}^c + \sum_{k \in [n_H]} \zeta_{ijk}^f = 0,
\end{align}
%\end{subequations}
where $(-\ell_{j})^\star$ is the convex conjugate of the opposite of $\ell_j$, $c^\star$ is the convex conjugate of the transport cost $c$ with respect to the first argument, and $f_k^{\star}$ is the convex conjugate of $f_k$. Note that the case where $\lambda = 0$ is also well defined in \cite{shafieezadeh2023new} despite the division. All three functions are either linear or quadratic so their conjugates are well-known \cite{borwein2005convex}. Both $-\ell_j$ and $f_k$ are linear so their convex conjugates are $b_j$ and $h_k$ if the conjugates' arguments are equal to $-a_j$ and $H_k$, respectively, and infinite otherwise. The conjugate of the transport cost is given by $c^\star(\zeta, \bm \xi_T^{(i)}) = \frac{1}{4} \zeta^\top \zeta + \zeta^\top \bm \xi_T^{(i)}$. 

In order to minimize \eqref{eq:prop_212}, one must avoid infinite costs, which adds constraints on $\lambda$, $\zeta_{ij}^\ell$, and $\zeta_{ijk}^f$. This means that \eqref{eq:prop_212} is equivalent to
\begin{subequations}\label{eq:proof_prop_212}
\begin{align}\label{eq:proof_prop_212_cost}
    &\inf_{s^{(i)}, \lambda \geq 0 \,, \kappa_{ijk} \geq 0} \lambda \epsilon + \frac{1}{N}\sum_{i \in [N]} s^{(i)} \leq 0 \,,
    \\ \label{eq:proof_prop_212_main_inequality}
    &\st \,, \forall i \in [N] \,, \forall j \in [J] \!:
    \nonumber \\
    &s^{(i)} \geq b_j
    %\\ \nonumber
    %&\hspace{40pt} 
    + \frac{1}{4\lambda} (\zeta_{ij}^c)^\top \zeta_{ij}^c + \zeta_{ij}^c \bm \xi_T^{(i)} + \!\!\sum_{k \in [n_H]}\!\! \kappa_{ijk} h_k ,
    \\ \label{eq:proof_prop_212_zeta_equality}
    & \zeta_{ij}^\ell + \zeta_{ij}^c + \!\!\sum_{k \in [n_H]} \!\!\zeta_{ijk}^f \!= 0 \,,
    \zeta_{ij}^\ell \!=\! -a_j \,, \zeta_{ijk}^f \!=\! \kappa_{ijk} H_k.
\end{align}
\end{subequations}
To conclude the proof, we stack $\kappa_{ijk}$ for all $k \in [n_H]$ into a vector $\kappa_{ij}$ and plug the equality constraints \eqref{eq:proof_prop_212_zeta_equality} into \eqref{eq:proof_prop_212_main_inequality} to obtain \eqref{eq:prop_dro_general_risk_dual}.

\subsection{Proof of Proposition \ref{prop:lambda_is_big_enough_if_support_is}}\label{app:lambda_is_big_enough_if_support_is}
We first note that when $\bm \Xi$ is bounded, as the probability mass cannot be moved infinitely far away, the supremum of \eqref{eq:prop_dro_general_quad_risk_def} is attained. In turn, this means that the set $\argmax_{\mathbb{Q} \in \mathbb{B}_{\epsilon}} \bb E_{\xi_T \sim \bb Q} ~  \bm\xi_T^\top Q \bm\xi_T$ is non-empty. Moreover, among the set of worst-case distributions, there always exists one which is empirical \cite{gao2023distributionally}; we denote by $\bb Q^\star$ any such empirical worst-case distribution.
Second, the lower-bounded average squared distance between the samples and the border of $\bm \Xi$ implies that no distribution in $\bb B_\epsilon\big(\widehat{\mathbb{P}}\big)$ has mass only at the border of $\bm \Xi$, as the transport cost would be greater than $\epsilon$.
%Second, because an amount $|\mc I| N^{-1}$ of mass can be moved from $\widehat{\bb P}$ by a distance of at most at most $\sqrt{\epsilon N |\mc I|^{-1}}$, only $|\mc I| N^{-1} \!-\! \delta < |\mc I| N^{-1}$ can be moved to the boundary of $\bm\Xi$.
 % 
This means that there exists a set $\bm\Xi_\delta \subset \bm\Xi$ containing at least an amount of mass $\delta > 0$ in all $\bb Q^\star$, and which is more than $\sqrt{\delta}$ away from the boundary of $\bm\Xi$. Third and finally, let $\mathcal{W}_{\max}$ denote the eigenspace spanned by the eigenvectors of $Q$ associated with $\lambda_{\max}(Q)$.%Third and finally, let $w_{\max}(Q)$ be an eigenvector of $Q$ associated with $\lambda_{\max}(Q)$.

%We continue by highlighting that $\bb B_{\Delta\epsilon}(\bb Q^\star) \subseteq \bb B_{\epsilon + \Delta\epsilon}\big(\widehat{\mathbb{P}}\big)$. Therefore, the shadow cost of robustification defined in \eqref{eq:shadow_cost_def} satisfies
With this notation in place, we observe that the shadow cost of robustification defined in \eqref{eq:shadow_cost_def} satisfies:
\begin{align*}
    \frac{d\mc R_\epsilon(Q)}{d\epsilon^+} &\!= \lim_{\Delta\epsilon \rightarrow 0^+} {\Delta\epsilon}^{\!-1}(\mc R_{\epsilon + \Delta\epsilon}(Q) - \mc R_\epsilon(Q)),
    \\
    &\!\geq \!\! \lim_{\Delta\epsilon \rightarrow 0^+} \!{\Delta\epsilon}^{\!-1} \!\!\!\! \sup_{\bb Q^\prime \in \bb B_{\Delta\epsilon}(\bb Q^\star)} \!\! \bb E_{\substack{\bm\xi_T^\prime \sim \bb Q^\prime \\ \bm\xi_T \sim \bb Q^\star}} {\bm\xi_T^\prime}^{\!\!\top} Q \bm \xi_T^\prime \!-\! {\bm\xi_T}^{\!\!\top} Q \bm \xi_T ,
    \\
    &\!\geq\!\! \lim_{\Delta\epsilon \rightarrow 0^+} \!{\Delta\epsilon}^{\!-1} \!\!\!\! \sup_{\bb Q^\prime \in \bb B_{\Delta\epsilon}(\bb Q^\star)} \!\! \bb E_{\substack{\bm\xi_T^\prime \sim \bb Q^\prime \\ \bm\xi_T \sim \bb Q^\star}} \! (\bm\xi_T^\prime \!-\! \bm\xi_T \!)^{\!\!\top} \! Q (\bm\xi_T^\prime \!-\! \bm\xi_T \!) 
    \\
    & \hspace{131pt} +\! 2(\bm\xi_T^\prime \!-\! \bm\xi_T \!)^{\!\!\top} Q \bm \xi_T.
\end{align*}
where the first inequality follows as $\bb B_{\Delta\epsilon}(\bb Q^\star) \subseteq \bb B_{\epsilon + \Delta\epsilon}\big(\widehat{\mathbb{P}}\big)$. 

Moreover, constraining the probability distributions to belong to a subset of $\bb B_{\Delta\epsilon}(\bb Q^\star)$ where the displacements $\Delta\bm \xi = \bm\xi_T^\prime - \bm\xi_T$ of mass $\delta$ encoded by the optimal transport map relative to $\bb Q^\star$ are uniform and parallel to $\mathcal{W}_{\operatorname{max}}$ yields the lower bound:%to move mass uniformly along the $\pm w_{\max}(Q)$ direction lowers the risk further, which implies
% Subset of the ball B(Qstar) in which the optimal transport map between Qstar and any other distribution in the set is constrained to move mass inside the subpace W.
\begin{align*}
    \frac{d\mc R_\epsilon(Q)}{d\epsilon^+} 
    &\!\geq \delta \!\lim_{\Delta\epsilon \rightarrow 0^+} \!{\Delta\epsilon}^{\!-1} \!\!\!\! \max_{\delta \|\Delta \bm\xi\|_2^2 \leq \Delta \epsilon} \!\! \bb E_{\bm\xi_T \sim \bb Q^\star} \lambda_{\max}(Q) \|\Delta \bm\xi\|_2^2
    \\
    & \hspace{158pt} + 2 \bm\xi_T^\top Q \Delta\bm\xi ,
% \end{align*}
% \begin{align*} % TODO: add expected value
%     \frac{d\mc R_\epsilon(Q)}{d\epsilon} &\!\geq\! \delta \!\! \lim_{\Delta\epsilon \rightarrow 0^+} \!\! {\Delta\epsilon}^{\!-1} \!\!\!\! \max_{\delta \|\Delta \bm\xi\|_2^2 \leq \Delta \epsilon} \!\! \lambda_{\max}(Q) \|\! \Delta\bm\xi\|_2^2 \!-\! 2 \bm\xi_T^\top Q \Delta\bm\xi,
    \\
    &\!\geq\! \delta \!\! \lim_{\Delta\epsilon \rightarrow 0^+} \!\! {\Delta\epsilon}^{\!-1} \!\!\!\! \max_{\delta \|\Delta \bm\xi\|_2^2 \leq \Delta \epsilon} \!\! \lambda_{\max}(Q) \|\! \Delta\bm\xi\|_2^2,
    %
    %
    % \sup_{\bb Q^\prime \in \bb B_{d\epsilon}(\bb Q^\star)} \!\! \bb E_{\substack{\bm\xi_T^\prime \sim \bb Q^\prime \\ \bm\xi_T \sim \bb Q^\star}}
    % \lambda_{\max}(Q)\|\bm\xi_T^\prime \!-\! \bm\xi_T\|_2^2 
    % \\
    % & \hspace{120pt} - 2\|\bm\xi_T^\prime \!-\! \bm\xi_T\|_2 w_{\max}(Q)^\top Q \bm \xi_T
\end{align*}
where the second inequality follows since the sign of $\Delta\bm\xi$ can always be flipped to make the expected value of $2 \bm\xi_T^\top Q \Delta\bm\xi$ positive.
%where $\Delta\bm \xi$ is displacement of the mass $\delta$. 
Note now that only the squared norm $\|\Delta\bm \xi\|_2^2$ is constrained because the support $\bm\Xi$ is not constraining for $\Delta\epsilon \leq \delta^2$ due to the definition of $\delta$. Hence, the maximum is lower bounded by $\delta^{-1} \lambda_{\max}(Q) \Delta\epsilon$. Since the term $\Delta\epsilon^{-1}$ cancels out, it then follows that 
\begin{align*}
    \lambda^\star = \frac{d\mc R_\epsilon(Q)}{d\epsilon^+} \geq \lim_{\Delta\epsilon \rightarrow 0^+} \!{\Delta\epsilon}^{\!-1} \lambda_{\max}(Q) \Delta\epsilon = \lambda_{\max}(Q),
\end{align*}
which concludes the proof.

\subsection{Proof of Lemma \ref{coro:quad_dro_prob_is_limit_linear}}
\label{app:proof_lem_bounded_support_quad}

In order to prove Lemma \ref{coro:quad_dro_prob_is_limit_linear}, we first need the following proposition.
\begin{proposition}\label{prop:quad_dro_prob_is_limit_linear}
Under the assumptions of Lemma \ref{coro:quad_dro_prob_is_limit_linear}, the risk $\mc R_\epsilon(Q)$ defined in \eqref{eq:prop_dro_general_quad_risk_def}
satisfies
\begin{subequations}\label{eq:prop_dro_general_quad_risk_dual}
    \begin{align}
    \label{eq:prop_dro_general_quad_risk_dual_cost}
    \mc R_\epsilon(Q) \leq &\inf_{s^{(i)}, 
    \lambda \geq 0, \kappa_{i} \geq 0}  
    \lambda \epsilon + \frac{1}{N} \!\sum_{i \in [N]} s^{(i)}\,,\;
    \\ \nonumber
    &\st\,, \forall i \in [N]:
    \\ \label{eq:prop_dro_general_quad_risk_dual_inequality}
    & s^{(i)} \!\geq  \max_{\bar{\bm\xi} \in \Xi} 
    - \bar{\bm\xi}^\top (Q \!-\! \lambda^{\!-1}Q^2) \bar{\bm\xi} \!+\! 2 \bar{\bm\xi}^\top Q \bm \xi_T^{(i)}
    \\ \nonumber
    & +\! \frac{1}{4\lambda} \kappa_i^\top \! H H^\top \!\kappa_i \!-\! \frac{1}{\lambda} \bar{\bm\xi}^\top Q H^\top \!\kappa_i \!+\! \big(h \!-\! H \bm \xi_T^{(i)} \big)^{\!\!\top} \!\kappa_i \,,
    \end{align}
\end{subequations}
    Moreover, \eqref{eq:coro_dro_general_quad_risk_dual_cost} holds with equality if the optimum $\lambda^\star$ of $\lambda$ satisfies $\lambda^\star I \succeq Q$.
\end{proposition}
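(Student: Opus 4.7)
The plan is to obtain \eqref{eq:prop_dro_general_quad_risk_dual} by applying Proposition~\ref{prop:cvar_original_deterministic_constraint_bounded_support} to a sequence of piecewise-linear lower approximations of $\bm{\xi}_T^\top Q \bm{\xi}_T$ and then passing to the limit, in accordance with the hint given in the proof of Lemma~\ref{coro:quad_dro_prob_is_limit_linear}. Since $Q \succeq 0$, the function $\bm{\xi}_T \mapsto \bm{\xi}_T^\top Q \bm{\xi}_T$ coincides with the pointwise supremum of its affine tangents: $\bm{\xi}_T^\top Q \bm{\xi}_T = \sup_{\bar{\bm{\xi}} \in \bm{\Xi}} \{2 \bar{\bm{\xi}}^\top Q \bm{\xi}_T - \bar{\bm{\xi}}^\top Q \bar{\bm{\xi}}\}$ for every $\bm{\xi}_T \in \bm{\Xi}$. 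Taking a dense sequence $\{\bar{\bm{\xi}}_j\}_{j \geq 1} \subset \bm{\Xi}$ and truncating at $J$ yields the piecewise-linear function $g_J(\bm{\xi}_T) := \max_{j \in [J]} \{2\bar{\bm{\xi}}_j^\top Q \bm{\xi}_T - \bar{\bm{\xi}}_j^\top Q \bar{\bm{\xi}}_j\}$, which increases pointwise to $\bm{\xi}_T^\top Q \bm{\xi}_T$ and converges uniformly on the compact polytope $\bm{\Xi}$.

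Next, I would invoke Proposition~\ref{prop:cvar_original_deterministic_constraint_bounded_support} with $a_j = 2 Q \bar{\bm{\xi}}_j$ and $b_j = -\bar{\bm{\xi}}_j^\top Q \bar{\bm{\xi}}_j$ to rewrite $\sup_{\mathbb{Q} \in \mathbb{B}_{\epsilon}} \mathbb{E}_\mathbb{Q}[g_J(\bm{\xi}_T)]$ as the finite-dimensional program \eqref{eq:prop_dro_general_risk_dual}. A direct calculation shows that, after substitution, constraint \eqref{eq:eq:prop_dro_general_risk_dual_inequality} takes exactly the form of the bracketed expression in \eqref{eq:prop_dro_general_quad_risk_dual_inequality}, but evaluated at $\bar{\bm{\xi}}_j$ and with a piece-dependent multiplier $\kappa_{ij}$. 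Uniform convergence of $g_J$ to the quadratic on $\bm{\Xi}$, together with the fact that every $\mathbb{Q} \in \mathbb{B}_{\epsilon}(\widehat{\mathbb{P}})$ is supported on $\bm{\Xi}$, implies $\sup_\mathbb{Q} \mathbb{E}_\mathbb{Q}[g_J(\bm{\xi}_T)] \to \mathcal{R}_\epsilon(Q)$, so strong duality for the piecewise-linear surrogate yields in the limit
\[
\mathcal{R}_\epsilon(Q) = \inf_{\lambda \geq 0} \lambda \epsilon + \frac{1}{N}\sum_{i\in[N]} \sup_{\bar{\bm{\xi}} \in \bm{\Xi}} \inf_{\kappa \geq 0} R(\bar{\bm{\xi}}, \kappa; \lambda, \bm{\xi}_T^{(i)}),
\]
where $R(\cdot)$ denotes the bracketed expression in \eqref{eq:prop_dro_general_quad_risk_dual_inequality}.

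The third step is to compare this limiting value with the proposed dual \eqref{eq:prop_dro_general_quad_risk_dual}, which restricts each index $i$ to a single multiplier $\kappa_i$ independent of $\bar{\bm{\xi}}$ and hence corresponds to $\inf_{\kappa_i \geq 0} \sup_{\bar{\bm{\xi}} \in \bm{\Xi}} R(\bar{\bm{\xi}}, \kappa_i;\lambda,\bm{\xi}_T^{(i)})$. By the max-min inequality, this quantity always dominates its sup-inf counterpart, which immediately gives the one-sided inequality \eqref{eq:prop_dro_general_quad_risk_dual_cost}. To establish equality under $\lambda^\star I \succeq Q$, I would apply Sion's minimax theorem: the map $\bar{\bm{\xi}} \mapsto R(\bar{\bm{\xi}}, \kappa;\lambda,\bm{\xi}_T^{(i)})$ is concave because its quadratic coefficient factors as $-(Q-\lambda^{-1}Q^2) = -\lambda^{-1} Q(\lambda I - Q) \preceq 0$ whenever $\lambda I \succeq Q$ (as $Q$ and $\lambda I - Q$ are commuting PSD matrices); the map $\kappa \mapsto R(\bar{\bm{\xi}}, \kappa;\lambda,\bm{\xi}_T^{(i)})$ is convex since $\tfrac{1}{4\lambda}\kappa^\top H H^\top \kappa$ is a convex quadratic; and the polytope $\bm{\Xi}$ is compact and convex while $\{\kappa \geq 0\}$ is convex. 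Sion's theorem then swaps the sup and inf, turning the upper bound into an equality.

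The main technical obstacle I anticipate is making the $J \to \infty$ passage entirely rigorous at the \emph{dual} level, since one must argue that the infimum over $(s^{(i)}, \lambda, \kappa_{ij})$ of the piecewise-linear program converges to the infimum of the limiting program rather than merely its primal value. This should follow from uniform convergence of $g_J$ on $\bm{\Xi}$ together with a Berge-type continuity argument, but care is required because the feasible set depends on $J$ and because the dual is non-compact in $\kappa$. The remaining point to verify is the activation condition $\lambda^\star I \succeq Q$ for Sion's swap; if $\lambda^\star$ is not large enough to make $R$ concave in $\bar{\bm{\xi}}$, the sup-inf and inf-sup formulations genuinely differ and only the upper-bound direction survives, which is precisely what the proposition claims.
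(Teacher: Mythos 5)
Your route coincides with the paper's: tangent-envelope (piecewise-affine) approximation of $\bm\xi_T^\top Q\bm\xi_T$ over an increasingly fine set of points in $\bm\Xi$, application of Proposition~\ref{prop:cvar_original_deterministic_constraint_bounded_support} with $a_j=2Q\bar{\bm\xi}_j$, $b_j=-\bar{\bm\xi}_j^\top Q\bar{\bm\xi}_j$, a sandwich/uniform-convergence argument on the compact support to pass to the limit, the max--min inequality for the one-sided bound, and Sion's theorem (with exactly your factorization $Q-\lambda^{-1}Q^2=\lambda^{-1}Q(\lambda I-Q)\succeq 0$) for equality when $\lambda^\star I\succeq Q$. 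The one step you flag as open --- justifying the $J\to\infty$ passage at the \emph{dual} level, i.e.\ that the infima of the discretized duals converge to the infimum of the program with the constraint imposed for all $\bar{\bm\xi}\in\bm\Xi$ --- is precisely where the paper does something specific: it invokes the discretizability of convex semi-infinite programs (Shapiro, Corollary~3.1), whose hypotheses require the semi-infinite problem to be convex with a nonempty \emph{bounded} solution set. Your worry about non-compactness in $\kappa$ is answered there by noting that $f$ is radially unbounded in $\kappa$, so once $s^{(i)}$ and $\lambda$ are bounded (which the cost enforces, since a feasible point with $\kappa_i=0$ and finite $\lambda$, $s^{(i)}$ exists by boundedness of $\bm\Xi$), the optimal $\kappa_i$ is bounded as well. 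So your proposal is correct in outline, and the missing Berge-type argument can be replaced by this standard discretizability result once boundedness of the solution set is verified.
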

\begin{proof}
The proof starts by linking the formulation \eqref{eq:prop_dro_general_risk_def} for piece-wise affine costs to $\mc R_\epsilon(Q)$. To do so, we express the quadratic control cost as the envelope of its tangents at each point of a $d$-dimensional of an $\epsilon$-net, i.e., an equally-spaced grid $\mc G_J \subseteq \bm\Xi$, composed of $J$ points. Mathematically, this gives
\begin{align*}
    \bm\xi_T^\top Q \bm \xi_T = \lim_{J\rightarrow \infty} \max_{j \in [J]} \,  2\bm\xi_T^\top Q \bm\xi_j - \bm\xi_j^\top Q \bm\xi_j \,,
\end{align*}
where $\bm\xi_j$ is the $j^{th}$ element of $\mc G_J$. In order to obtain a formulation that fits \eqref{eq:prop_dro_general_risk_def}, one must show that the limit operator commutes with the supremum and the expectation. 

We show the commutation of the limit with the dominated convergence theorem \cite{bartle2014elements} by finding bounds on the piece-wise affine approximation error 
\begin{align*}
     \Delta_J &= \bm\xi_T^\top Q \bm \xi_T - \max_{j \in [J]} \,  2\bm\xi_T^\top Q \bm\xi_j - \bm\xi_j^\top Q \bm\xi_j,
     \\
     &= \bm\xi_T^\top Q \bm \xi_T + \min_{j \in [J]} \,  \bm\xi_j^\top Q \bm\xi_j - 2\bm\xi_T^\top Q \bm\xi_j,
     \\
     &= \min_{j \in [J]} \, (\bm\xi_T - \bm\xi_j)^\top Q (\bm\xi_T - \bm\xi_j) \,.
\end{align*}
Note that $\Delta_J \geq 0$ because the tangents of a quadratic function are always below the curve. Moreover, the inequality $\Delta_J \leq \lambda_{\max}(Q) \min_{j \in [J]} \|\bm\xi_T - \bm\xi_j\|_2^2$ is satisfied by definition.
 
Furthermore, the distance $\min_{j \in [J]} \|\bm\xi_T - \bm\xi_j\|_2^2$ between any $\bm\xi_T \in \bm\Xi$ and the closest point of the grid $\mc G_J \subseteq \bm\Xi$ can be bounded as
\begin{align*}
    \min_{j \in [J]} \|\bm\xi_T - \bm\xi_j\|_2^2 \leq 2r(\bm\Xi) \sqrt{d} J^{-\frac{1}{d}} \,, \forall \bm\xi_T \in \bm\Xi \,,
\end{align*}
where $r(\bm\Xi) < \infty$ is the radius of a ball containing $\bm\Xi$, which is finite because $\bm\Xi$ is bounded. This gives the following inequality
\begin{align*}
    \bm\xi_T^\top Q \bm \xi_T \!-\! \Delta_Q J^{-\frac{1}{d}} 
    \!\leq \max_{j \in [J]}   2\bm\xi_T^\top Q \bm\xi_j \!-\! \bm\xi_j^\top Q \bm\xi_j 
    \leq \bm\xi_T^\top Q \bm \xi_T \,,
\end{align*}
where $\Delta_Q = 2r(\bm\Xi) \sqrt{d} \lambda_{\max}(Q)$. The expectation preserves order so for any $\bb Q$,
\begin{align*}
    \bb E_{\xi_T \!\sim \bb Q} \bm\xi_T^\top Q \bm \xi_T \!-\! \Delta_Q J^{-\frac{1}{d}} 
    \!&\leq \bb E_{\xi_T \!\sim \bb Q} \max_{j \in [J]}   2\bm\xi_T^\top Q \bm\xi_j \!-\! \bm\xi_j^\top Q \bm\xi_j \,,
    \\
    &\quad\quad \leq \bb E_{\xi_T \!\sim \bb Q}\bm\xi_T^\top Q \bm \xi_T \,.
\end{align*}
Finally, if all points of a function satisfy an inequality, its supremum must satisfy it as well, hence
\begin{align*}
    \!\mc R_\epsilon \!(Q) \!-\! \Delta_Q J^{-\frac{1}{d}}\! 
    \!\leq\! \!\sup_{\mathbb{Q} \in \mathbb{B}_{\epsilon}} \!\! \bb E_{\xi_T \!\sim \bb Q} \!\max_{j \in [J]} 2\bm\xi_T^\top Q \bm\xi_j \!-\! \bm\xi_j^\top \!Q \bm\xi_j
    \!\leq\! \mc R_\epsilon \!(Q) .
\end{align*}
The limit $\lim_{J\rightarrow\infty} \mc R_\epsilon(Q) -\! \Delta_Q J^{-\frac{1}{d}} $ is equal to $\mc R_\epsilon(Q)$. Therefore, using the sandwich theorem, the supremum of the piece-wise linear approximation is squeezed into the equality
\begin{align*}
    \lim_{J\rightarrow\infty} \sup_{\mathbb{Q} \in \mathbb{B}_{\epsilon}} \! \bb E_{\xi_T \sim \bb Q} \max_{j \in [J]} \, 2\bm\xi_T^\top Q \bm\xi_j \!-\! \bm\xi_j^\top Q \bm\xi_j
    = \mc R_\epsilon(Q),
\end{align*}
and the limit exists.

The second part of the proof aims at bringing the limit back into the problem and evaluating it. Using the previous result and Proposition \ref{prop:cvar_original_deterministic_constraint_bounded_support}  for all $J \in \bb N$, with $a_j = 2Q \bm\xi_j$ and $b_j = -\bm\xi_j^\top Q \bm\xi_j$, we know that $\mc R_\epsilon(Q)$ as defined in \eqref{eq:prop_dro_general_quad_risk_def} is equal to
\begin{align}
    \lim_{J\rightarrow\infty} &\inf_{s^{(i)},
    \lambda \geq 0, \kappa_{ij} \geq 0}  
    \lambda \epsilon + \frac{1}{N} \!\sum_{i \in [N]} s^{(i)}\,,\;\st
    \nonumber \\
    &\quad s^{(i)} \geq f(\bm\xi_j, \kappa_{ij}, \lambda) \,, \forall i\in[N] \,, \forall j \in [J] \label{eq:proof_prop10_second_part}\,,
\end{align}
where 
\begin{align*}
    &f(\bm\xi, \kappa, \lambda) = -\bm\xi^\top Q \bm\xi \!+\! \frac{1}{\lambda}\bm\xi^\top Q^2 \bm\xi  + 2 \bm\xi^\top Q \bm \xi_T^{(i)}
    \\
    &\quad\quad\; +\! \frac{1}{4\lambda}\!\lt( \kappa^\top H H^\top \kappa \!-\! 4\bm\xi^\top Q H^\top \kappa \rt)\! + \big(h - H \bm \xi_T^{(i)} \big)^{\!\!\top}\! \kappa \,,
\end{align*}
which is convex in $\kappa, \lambda \geq 0$ for any given $\bm\xi$ because it is the sum of constant, linear, inverse, and inverse times quadratic functions.
%Since there are only existence constraints for $\kappa_{ij}$, one can equivalently write
Since $\kappa_{ij} > 0$ affects only the constraint \eqref{eq:proof_prop10_second_part}, one can write the existence constraint as a constraint on the minimum, which yields
\begin{subequations}\label{eq:prop_dro_general_quad_risk_dual_inequality_discretized}
\begin{align}\label{eq:prop_dro_general_quad_risk_dual_inequality_discretized_cost}
    \!\!\lim_{J\rightarrow\infty} &\inf_{
    s^{(i)}, \lambda \geq 0}  
    \lambda \epsilon + \frac{1}{N} \!\sum_{i \in [N]} s^{(i)}\,,\; \st
    \\ \label{eq:prop_dro_general_quad_risk_dual_inequality_discretized_cons}
    &\quad s^{(i)} \geq \min_{\kappa_{ij} \geq 0} f(\bm\xi_j, \kappa_{ij}, \lambda) \,, \forall i\in[N] \,, \forall j \in [J]\,.\!\!
\end{align}
\end{subequations}

In the rest of the proof, we relate \eqref{eq:prop_dro_general_quad_risk_dual_inequality_discretized} to the following semi-infinite problem.
\begin{subequations}\label{eq:prop_dro_general_quad_risk_dual_inequality_seminf}
\begin{align}\label{eq:prop_dro_general_quad_risk_dual_inequality_seminf_cost}
    &\inf_{
    s^{(i)}, \lambda \geq 0}  
    \lambda \epsilon + \frac{1}{N} \!\sum_{i \in [N]} s^{(i)}\,,\; \st
    \\ \label{eq:prop_dro_general_quad_risk_dual_inequality_seminf_cons}
    &\quad s^{(i)} \geq \min_{\kappa_{i} \geq 0} f(\bar{\bm\xi}, \kappa_{i}, \lambda) \,, \forall i\in[N] \,, \forall \bar{\bm\xi} \in \Xi \,.
\end{align}
\end{subequations}
Because \eqref{eq:prop_dro_general_quad_risk_dual_inequality_seminf_cons} is sufficient for \eqref{eq:prop_dro_general_quad_risk_dual_inequality_discretized_cons}, we observe that $\eqref{eq:prop_dro_general_quad_risk_dual_inequality_seminf} \geq \eqref{eq:prop_dro_general_quad_risk_dual_inequality_discretized} = \mc R_\epsilon(Q)$ in general. Moreover, from \cite[Corollary 3.1]{shapiro2009semiinfinite}, if \eqref{eq:prop_dro_general_quad_risk_dual_inequality_seminf} is convex and its solution set is nonempty and bounded, then it is discretizable, meaning that it can be approximated up to arbitary precision by \eqref{eq:prop_dro_general_quad_risk_dual_inequality_discretized} with a fine enough grid $\mc G_J$. Note that \eqref{eq:prop_dro_general_quad_risk_dual_inequality_seminf} is linear except for \eqref{eq:prop_dro_general_quad_risk_dual_inequality_seminf_cons} so we only need \eqref{eq:prop_dro_general_quad_risk_dual_inequality_seminf_cons} to be convex for all $i=1,\dots,N$, which is guaranteed by the convexity of $f$ for any given $\bm\xi$. Moreover, (i) $\bm\Xi$ is bounded, so there always exist large enough yet finite $\lambda$ and $s^{(i)}$ such that \eqref{eq:prop_dro_general_quad_risk_dual_inequality_seminf} is feasible with $\kappa_i = 0$, (ii) $\lambda$ and $s^{(i)}$ are penalized in the cost so any unbounded value has a higher cost than the aforementioned feasible point, and (iii) $f$ is radially unbounded in $\kappa$ so if $s^{(i)}$ is bounded, so is $\kappa_i = 0$. Hence, so the solution set is nonempty and bounded, which means that \eqref{eq:prop_dro_general_quad_risk_dual_inequality_seminf} is discretizable and therefore equal to \eqref{eq:prop_dro_general_quad_risk_dual_inequality_discretized}.

Finally, semi-infinite problems can be rewritten in a bi-level form, which implies that \eqref{eq:prop_dro_general_quad_risk_dual_inequality_seminf} is equal to
\begin{subequations}
\label{eq:prop_dro_general_quad_risk_dual_maxmin}
\begin{align}
    &\inf_{
    s^{(i)}, \lambda \geq 0}  
    \lambda \epsilon + \frac{1}{N} \!\sum_{i \in [N]} s^{(i)}\,,\; \st
    \\ \label{eq:prop_dro_general_quad_risk_dual_inequality_maxmin}
    &\quad s^{(i)} \geq \max_{\bar{\bm\xi} \in \Xi} \min_{\kappa_{i} \geq 0} f(\bar{\bm\xi}, \kappa_{i}, \lambda) \,, \forall i\in[N] \,.
\end{align}
\end{subequations}
In general, one has $\max_{\bar{\bm\xi} \in \Xi} \min_{\kappa_{i} \geq 0} f(\bar{\bm\xi}, \kappa_{i}, \lambda) \leq \min_{\kappa_{i} \geq 0} \max_{\bar{\bm\xi} \in \Xi} f(\bar{\bm\xi}, \kappa_{i}, \lambda)$. This means that \eqref{eq:prop_dro_general_quad_risk_dual_inequality} is a stricter constraint than \eqref{eq:prop_dro_general_quad_risk_dual_inequality_maxmin}, yielding $\eqref{eq:prop_dro_general_quad_risk_dual_maxmin} \leq \eqref{eq:prop_dro_general_quad_risk_dual}$. Nevertheless, if $f$ is not only convex in $\kappa$ but also concave in $\bm \xi$, i.e., $Q - \lambda^{-1}Q^2 \succeq 0$, then Sion's minimax theorem proves that the $\max$ and $\min$ operators commute \cite[Corollary 3.3]{sion1958general}. This means that if $\lambda I \succeq Q$, \eqref{eq:prop_dro_general_quad_risk_dual_inequality} and \eqref{eq:prop_dro_general_quad_risk_dual_inequality_maxmin} are equivalent, which concludes the proof.
\end{proof}
\vspace{10pt}

Using Proposition \ref{prop:quad_dro_prob_is_limit_linear}, we are now ready to prove Lemma \ref{coro:quad_dro_prob_is_limit_linear} by dualizing \eqref{eq:prop_dro_general_quad_risk_dual_inequality} to remove the $\max$ operator, and by using Schur's complement to obtain linear inequalities. We start by highlighting that \eqref{eq:prop_dro_general_quad_risk_dual_inequality} contains the maximization of the quadratic cost 
\begin{align*}
    -\underbrace{\bar{\bm\xi}^\top (Q - \lambda^{\!-1}Q^2) \bar{\bm\xi}}_{\mt{quadratic}} 
    & + \underbrace{\bar{\bm\xi}^\top ( 2 Q \bm \xi_T^{(i)} \!-\! \lambda^{\!-1} Q H^\top \kappa_i)}_{\mt{linear}}
    \\ \nonumber
    &+\! \underbrace{\frac{1}{4\lambda} \kappa_i^\top H H^\top \kappa_i + \big(h - H \bm \xi_T^{(i)}  \big)^{\!\!\top} \kappa_i}_{\mt{constant}} \,,
\end{align*}
subject to convex polytopic constraints $H\bar{\bm\xi} - h \leq 0$. The dual problem of this QP is therefore given by \cite{boyd2004convex} as
\begin{subequations}
\begin{align}
    \!\!\min_{\substack{\mu_i \geq 0}} 
    \!
    \mu_i^\top h & \!+\! \overbrace{\frac{1}{4\lambda} \kappa_i^{\!\top} \!H\! H^{\!\top}\!\! \kappa_i \!+\! \big(h \!-\! H \bm \xi_T^{(i)} \big)^{\!\!\top}\!\! \kappa_i}^{(\clubsuit)}
    \nonumber \\
    &\!+\! \frac{1}{4} \Big\| H^{\!\top} \!\!\mu_i \!+\! \frac{1}{\lambda}\!(HQ)^{\!\top}\!\!\kappa_i \!-\! 2 Q \bm\xi_T^{(i)} \!\Big\|^{2}_{Q_2} \!
    ,\!\! \label{eq:proof_coro_quad_dro_prob_is_limit_linear_dual}
    \\ \label{eq:proof_coro_quad_dro_prob_is_limit_linear_dual_constraint}
    \st\; & P_{\lambda} \Big(H^{\!\top} \!\!\mu_i \!+\! \frac{1}{\lambda}\!(HQ)^{\!\top}\!\!\kappa_i \!-\! 2 Q \bm\xi_T^{(i)}\Big) = 0,
\end{align}
\end{subequations}
where $\|\cdot\|_{Q_2}^2 = \cdot^\top Q_2 \,\cdot$, $Q_2 = (Q - \lambda^{\!-1}Q^2)^{\dagger}$, and $P_\lambda$ is the projection on $\operatorname{null}(Q_2^\dagger) = \operatorname{null}(\lambda I - Q)$. Note that $P_\lambda =  I - (\lambda I - Q)^\dagger (\lambda I - Q)$ is symmetric, commutes with $Q$ and $Q^{-1}$, and is equal to both its square and pseudo-inverse. Since we are looking for an upper bound for $\mc R_\epsilon(Q)$ when $\lambda \leq \lambda_{\max}(Q)$, we can replace \eqref{eq:proof_coro_quad_dro_prob_is_limit_linear_dual_constraint} by the stricter constraint
\begin{align}\label{eq:proof_coro_quad_dro_prob_is_limit_linear_dual_constraint_strict} 
    P_{\lambda} H^{\!\top} \!\!\mu_i = 0\,, P_{\lambda} \big( H^{\!\top}\!\!\kappa_i \!-\! 2 \lambda \bm\xi_T^{(i)} \big) = 0,
\end{align}
as it leads to a larger minimum if $P_\lambda \neq 0$ and but is equivalent for any $\lambda > \lambda_{\max}(Q)$ because $P_\lambda = 0$.
Moreover, the last term of \eqref{eq:proof_coro_quad_dro_prob_is_limit_linear_dual} can be split as
\begin{align*}
    & \frac{1}{4} \Big\| 2 Q \bm\xi_T^{(i)} \!-\!  \frac{1}{\lambda}\!(HQ)^{\!\top} \!\!\kappa_i  \Big\|_{Q_2}^2 
    \\
    &\quad -\frac{1}{2} \!\Big(\! 2 Q \bm\xi_T^{(i)} \!-\!  \frac{1}{\lambda}\!(HQ)^{\!\top} \!\!\kappa_i  \!\Big)^{\!\!\top}\!\! Q_2 H^{\!\top} \!\!\mu_i
    %\\ &
    + \frac{1}{4}\mu_i^{\!\top}\! H Q_2 H^{\!\top} \!\!\mu_i\,,
\end{align*}
or equivalently,
\begin{subequations}
\label{eq:coro_proof_quad_split}
\begin{align}
\label{eq:coro_proof_quad_split_a}
    & \frac{1}{4} ( 2 \lambda \bm\xi_T^{(i)} \!-\! H^{\!\top} \!\!\kappa_i )^\top (\lambda^2 Q^{-1} \!-\! \lambda I)^\dagger ( 2 \lambda \bm\xi_T^{(i)} \!-\! H^{\!\top} \!\!\kappa_i )
    \\
\label{eq:coro_proof_quad_split_b}
    &-\frac{1}{2} \!\Big(\! 2 \lambda \bm\xi_T^{(i)} \!+\! H^{\!\top} \!\!\kappa_i  \!\Big)^{\!\!\top}\!\! (\lambda I - Q)^\dagger H^{\!\top} \!\!\mu_i
    \\
\label{eq:coro_proof_quad_split_c}
    &+ \frac{1}{4}\mu_i^{\!\top}\! H Q_2 H^{\!\top} \!\!\mu_i\,,
\end{align}
\end{subequations} 

In order to obtain some simplifications, we use the following Woodbury-like identities:
\begin{subequations}
\label{eq:coro_proof_woodbury}
\begin{align}
(\lambda^2Q^{-1} \!-\! \lambda I)^\dagger &= \!\frac{1}{\lambda^2} \!\! \lt(\! Q^{-1} \!-\! \frac{1}{\lambda} I \!\rt)^\dagger,
\nonumber \\
& = \!\!\lt(\! \frac{1}{\lambda} Q^{-1} \!\!-\! \frac{1}{\lambda} Q^{-1} \!\!+\! \frac{1}{\lambda^2}I \!\rt)\!\!\lt(\! Q^{-1} \!-\! \frac{1}{\lambda} I \!\rt)^{\!\!\dagger},
\nonumber \\
& = \!(\lambda I \!-\! Q)^\dagger \!+\! \!\lt(\! \frac{1}{\lambda^2}I \!-\! \frac{1}{\lambda} Q^{-1} \!\!\rt)\!\!\lt(\! Q^{-1} \!-\! \frac{1}{\lambda} I \!\rt)^{\!\!\dagger},
\nonumber \\
& = \!(\lambda I \!-\! Q)^\dagger \!-\! \frac{1}{\lambda}(I - P_\lambda),
\label{eq:coro_proof_woodbury_a}
\end{align}
and
\begin{align}\label{eq:coro_proof_woodbury_b}
Q_2 &= \lambda Q^{-1} (\lambda I - Q)^{\dagger},
\\
&= (I + \lambda Q^{-1} - I) (\lambda I - Q)^{\dagger},
\nonumber \\
&= (\lambda I - Q)^{\dagger} + Q^{-1}(I - P_\lambda),
\nonumber \\ \label{eq:coro_proof_woodbury_c}
&= (\lambda I - Q)^{\dagger} + Q^{-1} - P_\lambda Q^{-1} P_\lambda.
\end{align}
\end{subequations} 
We plug \eqref{eq:coro_proof_woodbury_a}, \eqref{eq:coro_proof_woodbury_b}, and \eqref{eq:coro_proof_woodbury_c} into \eqref{eq:coro_proof_quad_split_a}, \eqref{eq:coro_proof_quad_split_b}, and \eqref{eq:coro_proof_quad_split_c}, respectively, which gives
\vspace{-12pt}
\begin{subequations}
\label{eq:coro_proof_quad_split_with_woodbury}
\begin{align}
\nonumber
    \!\!&\eqref{eq:coro_proof_quad_split} \!=\! \frac{1}{4} \Big\| 2 \lambda \bm\xi_T^{(i)} \!\!-\!  H^{\!\top} \!\!\kappa_i  \Big\|_{(\lambda I \!- Q)^{\!\dagger}}^2 \!\!\!-\! \overbrace{\frac{1}{4\lambda} \Big\| 2 \lambda \bm\xi_T^{(i)} \!\!-\!  H^{\!\top} \!\!\kappa_i  \Big\|_2^2}^{(\spadesuit)}\!\!
    \\
    \label{eq:coro_proof_quad_split_with_woodbury_a}
    &\quad\quad\quad +\! \lt. \frac{1}{4\lambda} \Big\| 2 \lambda \bm\xi_T^{(i)} \!\!-\!  H^{\!\top} \!\!\kappa_i  \Big\|_{P_\lambda}^2 \rt\}(\bigstar)
    \\
\label{eq:coro_proof_quad_split_with_woodbury_b}
    &-\! \frac{1}{2} \!\Big(\! 2 \lambda \bm\xi_T^{(i)} \!-\!  H^{\!\top} \!\!\kappa_i \!\Big)^{\!\!\top}\!\! (\lambda I \!-\! Q)^{\!\dagger} H^{\!\top} \!\!\mu_i
    \\
\label{eq:coro_proof_quad_split_with_woodbury_c}
    &+\! \frac{1}{4}\| H^{\!\top} \!\!\mu_i\|_{(\lambda I \!- Q)^{\!\dagger}}^2 \!\!+\! \underbrace{\frac{1}{4}\| H^{\!\top} \!\!\mu_i\|_{Q^{-1}}^2 \!}_{(\blacklozenge)} \!-\! \underbrace{ \frac{1}{4}\| P_\lambda H^{\!\top} \!\!\mu_i\|_{Q^{-1}}^2 }_{(\bigstar)}\!.
\end{align}
\end{subequations} 
The terms $(\clubsuit)$ in \eqref{eq:proof_coro_quad_dro_prob_is_limit_linear_dual} can be grouped by completing the squares as
\begin{align}\label{eq:coro_proof_quad_completing}
    \underbrace{\frac{1}{4\lambda} \Big\| 2 \lambda \bm\xi_T^{(i)} \!-\!  H^{\!\top} \!\!\kappa_i  \Big\|_2^2}_{(\spadesuit)} \!- \lambda\|\bm\xi_T^{(i)}\|_2^2 + \kappa_i^\top h \,,
\end{align}
We remark that the terms marked by $(\spadesuit)$ in \eqref{eq:coro_proof_quad_completing} and \eqref{eq:coro_proof_quad_split_with_woodbury_c} cancel out, and that the terms marked by $(\bigstar)$ in \eqref{eq:coro_proof_quad_split_with_woodbury} can be factorized as
\begin{align}\label{eq:coro_proof_quad_completing_2}
    \!\!( H^{\!\top} \!\!\kappa_i \!-\! 2 \lambda \bm\xi_T^{(i)}\! \!+\! H^{\!\top} \!\!\mu_i)^{\!\top}\! P_\lambda \Big( \frac{1}{\lambda} H^{\!\top} \!\!\kappa_i \!-\! 2\bm\xi_T^{(i)} \!\!-\! Q^{-1} H^{\!\top} \!\!\mu_i \Big),\!\!
\end{align}
because the added cross terms are in the null space of $(\lambda I - Q)$. The constraint \eqref{eq:proof_coro_quad_dro_prob_is_limit_linear_dual_constraint} implies that \eqref{eq:coro_proof_quad_completing_2} is zero, so the terms marked by $(\bigstar)$ in \eqref{eq:coro_proof_quad_split_with_woodbury} cancel out. Finally, all remaining terms besides $(\blacklozenge)$ in \eqref{eq:coro_proof_quad_split_with_woodbury_c} can be factorized. Hence, the dual problem \eqref{eq:proof_coro_quad_dro_prob_is_limit_linear_dual} is equal to
\begin{align} \label{eq:proof_coro_quad_dro_prob_is_limit_linear_dual_simplified}
    \min_{\substack{\mu_i \geq 0}} 
    h^{\!\top}\! (\kappa_i \!+\! \mu_i) &+ \frac{1}{4}\| H^{\!\top} \!\!\mu_i\|_{Q^{-1}}^2 \!-\! \lambda\|\bm\xi_T^{(i)}\|_2^2
    \\ \nonumber
    &+\! \frac{1}{4} \Big\| 2 \lambda \bm\xi_T^{(i)} \!\!-\!  H^{\!\top} \!\!(\kappa_i \!+\! \mu_i)  \!\Big\|^{2}_{(\lambda I \!- Q)^{\!\dagger}} 
    ,
\end{align}

In general, the right-hand side of \eqref{eq:prop_dro_general_quad_risk_dual_inequality} is smaller than \eqref{eq:proof_coro_quad_dro_prob_is_limit_linear_dual_simplified}, which means that $s^{(i)} \geq$ \eqref{eq:proof_coro_quad_dro_prob_is_limit_linear_dual_simplified} implies \eqref{eq:prop_dro_general_quad_risk_dual_inequality}. Moreover, if $\lambda I - Q \succeq 0$, the problem \eqref{eq:proof_coro_quad_dro_prob_is_limit_linear_dual_simplified} is a convex and strictly feasible QP. Strong duality therefore shows that the right-hand side of \eqref{eq:prop_dro_general_quad_risk_dual_inequality} is equal to \eqref{eq:proof_coro_quad_dro_prob_is_limit_linear_dual_simplified} in this case. Finally, we replace the upper bound on a minimum by an existence constraint and perform the change of variable $\psi_i = \kappa_i + \mu_i$ to rewrite \eqref{eq:prop_dro_general_quad_risk_dual_inequality} as
\begin{align*}
    & s^{(i)} \geq  h^{\!\top}\! \psi_i \!-\! \lambda\|\bm\xi_T^{(i)}\|_2^2 \!+\! \aoverbrace[L1R]{\frac{1}{4} \mu_i^{\!\top} H Q^{-1} H^{\!\top} \!\!\mu_i}
    \\ \nonumber
    &\quad\;\;
    \!+\! \aunderbrace[l1r]{\frac{1}{4} \big( 2 \lambda \bm\xi_T^{(i)} \!\!-\!  H^{\!\top} \!\psi_i  \big)^{\!\!\top}\!\! (\lambda I \!-\! Q)^{\!\dagger} \!\big( 2 \lambda \bm\xi_T^{(i)} \!\!-\!  H^{\!\top} \!\psi_i  \big)}\!.
\end{align*}
%The problem \eqref{eq:prop_dro_general_quad_risk_dual} is obtained 
Applying Schur's lemma to the two terms highlighted with brackets and with \eqref{eq:proof_coro_quad_dro_prob_is_limit_linear_dual_constraint_strict}, we obtain
\begin{align*}
    \mc R_\epsilon(Q) \leq \inf_{\substack{
    \lambda \geq 0, \mu_i \geq 0, \\ s^{(i)}\!,\, \psi_i \geq \mu_i}}  
    &
    \lambda \epsilon + \frac{1}{N} \!\sum_{i \in [N]} s^{(i)}\,,\; \\ \nonumber
    &\st\,, \forall i \in [N]:
    \\
    & P_\lambda H^\top \mu_i = 0, %P_\lambda (2 \lambda \bm\xi_T^{(i)} \!\!+\!  H^{\!\top} \!\psi_i) = 0,
    \\
    &\hspace{-32pt}\lt[\matb
    \!s^{(i)} \!\!-\!  h^{\!\top}\! \psi_i
    \!+\! \lambda\|\bm\xi_T^{(i)} \! \|_2^2 \hspace{-8pt}\!
    &
    \star
    %\big( 2 \lambda \bm\xi_T^{(i)} \!\!+\!  H^{\!\top} \!\psi_i  \big)^{\!\!\top}
    &
    \!\!\!\!\star
    %\!\! \mu_i^{\!\top} H
    \\
    2 \lambda \bm\xi_T^{(i)} \!\!-\!  H^{\!\top} \!\psi_i
    &
    4(\lambda I \!-\! Q)
    &
    \!\!\!\!\star % 0
    \\
    H^{\!\top} \mu_i
    &
    0
    &
    \!\!\!\!4Q\!
    \mate\rt] \! \!\succeq \! 0, \!\!
    % \\ \label{eq:coro_dro_general_quad_risk_dual_inequality}
    % & s^{(i)} \geq  h^{\!\top}\! \psi_i + \frac{1}{4} \mu_i^{\!\top} H Q^{-1} H^{\!\top} \!\!\mu_i \!-\! \lambda\|\bm\xi_T^{(i)}\|_2^2
    % \\ \nonumber
    % &\quad\quad\quad
    % +\! \frac{1}{4} \big( 2 \lambda \bm\xi_T^{(i)} \!\!+\!  H^{\!\top} \!\psi_i  \big)^{\!\!\top}\!\! (\lambda I \!-\! Q)^{\!-1} \!\big( 2 \lambda \bm\xi_T^{(i)} \!\!+\!  H^{\!\top} \!\psi_i  \big).
\end{align*}
where the equality holds when $\lambda I - Q \succeq 0$. Finally, the constraint $P_\lambda H^\top \mu_i = 0$ can be enforced as LMI using Schur's complement of $\alpha - \mu_i^\top\! H (\lambda I - Q)^\dagger H^\top\! \mu_i$ with an arbitrarily large $\alpha$, which concludes the proof.

\subsection{Proof of Lemma \ref{lem:drinc_cost}}
\label{app:proof_lem_drinc_cost}

% We start the proof with the following proposition
% \begin{proposition}\label{prop:proof_lem_drinc_cost}
%     Under Assumption \ref{ass:support_is_polyhedron}, if $\epsilon > 0$ and $\bm\Xi$ is bounded, and for any $\bm \Phi \in \bb R^{(n+m)\times d}$, the risk $\mc R_\epsilon(\bm \Phi^\top D \bm \Phi)$ is equal to
%     \begin{align}\label{eq:prop_proof_lem_drinc_cost}
%         \min_{Q \succeq \bm \Phi^\top D \bm \Phi} \lim_{\eta \rightarrow 0^+} \mc R_\epsilon(Q+\eta I).
%     \end{align}
% \end{proposition}
% \vspace{5pt}
% \begin{proof}
The proof is conducted in three parts. First, we rewrite the quadratic form $\bm\Phi^{\!\top} \! D \bm\Phi$ as a matrix $Q$ to obtain linear constraints. Second, we analyze the suboptimality when $Q \succ 0$ and show that it vanishes when $Q \rightarrow \bm\Phi^{\!\top} \! D \bm\Phi$. Third and finally, we rewrite all the constraints as LMIs.

We start by showing that
\begin{align}\label{eq:proof_lem_drinc_cost_min_Q}
\mc R_\epsilon(\bm\Phi^{\!\top} \! D \bm\Phi) = \min_{Q \succeq \bm\Phi^{\!\top} \!D \bm\Phi} \mc R_\epsilon(Q).
\end{align}
Recall the definition 
\begin{align*}
\mc R_\epsilon(Q) := \sup_{\mathbb{Q} \in \mathbb{B}_{\epsilon}} \bb E_{\xi_T \sim \bb Q} ~  \bm\xi_T^\top Q \bm\xi_T,
\end{align*}
and note that for any $\bm \xi_T \in \bm \Xi$, if $Q \succeq \bm \Phi^{\!\top} \! D \bm \Phi$ the following inequality holds
\begin{align*}
\bm\xi_T^\top Q \bm\xi_T \geq \bm\xi_T^\top \bm \Phi^{\!\top} \!D \bm \Phi \bm\xi_T.
\end{align*}
Hence, because probability distributions are non-negative and integrals preserve the order, one has
\begin{align*}%\label{eq:prop_proof_lem_drinc_cost_QMI}
\bb E_{\bm \xi_T\sim \bb Q}\lt[  \bm\xi_T^\top Q \bm\xi_T\rt] 
\geq 
\bb E_{\bm \xi_T\sim \bb Q}[\bm\xi_T^\top \bm \Phi^{\!\top} \!D \bm \Phi \bm\xi_T],
\end{align*}
for any probability distribution $\bb Q$ and therefore also for the worst one. Hence, $Q \succeq \bm \Phi^{\!\top} \!D \bm \Phi$ implies that $\mc R_\epsilon(Q) \geq \mc R_\epsilon(\bm \Phi^{\!\top} \! D \bm \Phi)$. Moreover, the equality is attained because $\bm \Phi^{\!\top} \! D \bm \Phi \in \argmin_{Q \succeq \bm\Phi^{\!\top} \! D \bm\Phi} \mc R_\epsilon(Q)$. 

The proof continues by showing
\begin{align}\label{eq:proof_lem_drinc_cost_lim_eta}
    \mc R_\epsilon(\bm\Phi^{\!\top} \! D \bm\Phi) = \min_{Q \succeq \bm \Phi^{\!\top} \! D \bm \Phi} \lim_{\eta \rightarrow 0} \mc R_\epsilon(Q+|\eta| I).
\end{align}
Note that $\mc R_\epsilon(Q) = \mc R_\epsilon(\lim_{\eta \rightarrow 0} Q+|\eta| I)$, where one can take the limit out of the risk using the inequality
\begin{align}\label{eq:prop_proof_lem_drinc_cost_squeeze}
    \mc R_\epsilon(Q) + |\eta| \max_{\bm \xi_T \in \bm \Xi} \|\bm \xi_T\|_2^2 \geq \mc R_\epsilon(Q + |\eta| I) \geq \mc R_\epsilon(Q),
\end{align}
which holds if $\bm \Xi$ is bounded. This means that the limit for $\eta \rightarrow 0$ is squeezed between two values that tend towards $\mc R_\epsilon(Q)$.

We finish the proof by expressing $Q \succeq \bm\Phi^{\!\top} \! D \bm\Phi$ as a Schur complement. This yields
\begin{align}
\label{eq:proof_quad_cost_coro_end}
    \lt[\matb
    Q - \eta I & \Phi^{\!\top} D^{\frac{1}{2}}
    \\
    D^{\frac{1}{2}} \Phi & \alpha I
    \mate\rt]
    \succeq 0 \,.
\end{align}
Combining \eqref{eq:proof_lem_drinc_cost_lim_eta} and \eqref{eq:proof_quad_cost_coro_end} yields \eqref{eq:lem_quad_cost}, which concludes the proof.

\subsection{Proof of Lemma \ref{lem:drinc_constraints}}
\label{app:proof_lem_drinc_constraints}

Lemma \ref{lem:drinc_constraints} is a direct consequence of applying Proposition \ref{prop:cvar_original_deterministic_constraint_bounded_support} to the definition \eqref{eq:cvar_constraint_definition}. Indeed, with $G_{J+1} = 0$ and $ g_{J+1} = -\tau$, one can rewrite \eqref{eq:conditional_value_at_risk_safety_constraints_inf_horizon_both} as \eqref{eq:prop_dro_general_risk_def} by setting 
$a_j = \gamma^{-1}G_{j}\bm\Phi, b_j = \gamma^{-1}(-g_{j} - \tau + \gamma \tau)$. This means that \eqref{eq:conditional_value_at_risk_safety_constraints_inf_horizon_both} is equivalent to
\begin{align*}
    &\inf_{s^{(i)},
    \rho \geq 0, \kappa_{ij} \geq 0}  
    \rho \epsilon + \frac{1}{N} \!\sum_{i \in [N]} s^{(i)} \leq 0 \,,
    \\
    &\st \,, \forall i \in [N] \,, \forall j \in [J+1] :
    \\
    & s^{(i)} \geq \frac{1}{\gamma}(-g_{j} \!-\! \tau \!+\! \gamma \tau) \!+\! \frac{1}{\gamma}G_{j}^\top \bm\Phi \bm \xi_T^{(i)}  \!+\! \big(h \!-\! H \bm \xi_T^{(i)} \big)^{\!\!\top} \! \kappa_{ij}
    \\ \nonumber
    &\quad\quad\quad +\! \frac{\|H^{\!\top}\! \kappa_{ij}\|_2^2}{4\rho}  \!-\! \frac{1}{2\rho\gamma} G_{j}^\top \bm\Phi H^{\!\top}\! \kappa_{ij} \!+\! \frac{\|\bm\Phi^{\!\top} G_{j}\|_2^2}{4\rho\gamma^2}.
\end{align*}
One can factorize the last three terms of the constraint and do the change of variable $\zeta^{(i)} = s^{(i)} + \gamma^{-1}\tau - \tau$, which gives
\begin{subequations}
\begin{align}
    &\inf_{s^{(i)},
    \rho \geq 0, \kappa_{ij} \geq 0}  
    \rho \epsilon - \frac{1}{\gamma}\tau + \tau + \frac{1}{N} \!\sum_{i \in [N]} \zeta^{(i)} \leq 0 \,,
    \\ \nonumber
    &\st \,, \forall i \in [N] \,, \forall j \in [J+1] :
    \\ \label{eq:proof_lem_constraint_factorized_si}
    &\zeta^{(i)} \geq -\frac{1}{\gamma}g_{j} \!+\! \frac{1}{\gamma}G_{j}^\top \bm\Phi \bm \xi_T^{(i)}  \!+\! \big( h - H \bm \xi_T^{(i)} \big)^{\!\!\top} \! \kappa_{ij}
    \\ \nonumber 
    &\quad\quad\quad +\! \frac{1}{4\rho\gamma^2} (\bm\Phi^{\!\top} G_{j} \!-\! \gamma H^{\!\top}\! \kappa_{ij})^{\!\top}\! (\bm\Phi^{\!\top} G_{j} \!-\! \gamma H^{\!\top}\! \kappa_{ij}).\!\!
\end{align}
\end{subequations}
Finally, a zero upper-bound constraint on an infimum is equivalent to an existence constaint. Moreover, because $\rho \geq 0$, \eqref{eq:proof_lem_constraint_factorized_si} can be written as an LMI using Schur's complement, which concludes the proof.

\end{document}